\newtheorem{theorem}{Theorem}
\newtheorem{lemma}{Lemma}
\newtheorem{remark}{Remark}
\newbox\qedbox
\newenvironment{proof}{\smallskip\noindent{\bf Proof.}\hskip \labelsep}%
                        {\hfill\penalty10000\copy\qedbox\par\medskip}
\newcommand{\bfR}{{\Bbb R}}
\newcommand{\bfC}{{\Bbb C}}
\newcommand{\ii}{\text{i}}
\newcommand{\e}{\text{e}}
\newcommand{\dd}{\text{d}}
\newcommand{\Om}{\Omega}
\newcommand{\nn}{\nonumber}
\newcommand\be{\begin{equation}}
\newcommand\ee{\end{equation}}
\newcommand{\bea}{\begin{eqnarray}}
\newcommand{\eea}{\end{eqnarray}}
\newcommand\berr{\begin{eqnarray*}}
\newcommand\eerr{\end{eqnarray*}}
\begin{document}

\title{Long-time asymptotic behavior for an extended modified Korteweg-de Vries equation}
\author{ Nan Liu$^{a,}$\footnote{Corresponding author.},\, Boling Guo$^{a}$,\, Deng-Shan Wang$^{b}$,\, Yufeng Wang$^{c}$\\
$^a${\footnotesize{\em Institute of Applied Physics and Computational Mathematics,  Beijing 100088, P.R. China}}\\
$^b${\footnotesize{\em School of Applied Science, Beijing Information Science and Technology University, Beijing 100192, P.R. China}}\\
$^c${\footnotesize{\em College of Science, Minzu University of China, Beijing 100081, P.R. China}}\\ \setcounter{footnote}{-1}\footnote{E-mail address: ln10475@163.com (N. Liu).}}

\date{}
\maketitle

\begin{quote}
{{{\bfseries Abstract.} We investigate an integrable extended modified Korteweg-de Vries equation on the line with the initial value belonging to the Schwartz space. By performing the nonlinear steepest descent analysis of an associated matrix Riemann--Hilbert problem, we obtain the explicit leading-order asymptotics of the solution of this initial value problem as time $t$ goes to infinity. For a special case $\alpha=0$, we present the asymptotic formula of the solution to the extended modified Korteweg-de Vries equation in region $\mathcal{P}=\{(x,t)\in\bfR^2|0<x\leq Mt^{\frac{1}{5}},t\geq3\}$ in terms of the solution of a fourth order Painlev\'e II equation.

}

 {\bf Keywords:} Extended modified Korteweg-de Vries equation; Riemann--Hilbert problem; Nonlinear steepest descent method; Long-time asymptotics.}
\end{quote}

\section{Introduction}
\setcounter{equation}{0}
It is a well-known fact that the modified Korteweg-de Vries (mKdV) equation is a fundamental completely integrable model in solitary waves theory, and is given in canonical form as
\be\label{1.2}
u_t+6\sigma u^2u_x+u_{xxx}=0,
\ee
where $\sigma=\pm1$, $u=u(x,t)$ is a real function with evolution variable $t$ and transverse variable $x$. This equation gives rise to multiple soliton solutions and multiple singular soliton solutions for $\sigma=+1$ and $\sigma=-1$, respectively. Moreover, the mKdV equation has significant applications in various physical contexts such as the generation of supercontinuum in optical fibres, acoustic waves in certain anharmonic lattices, nonlinear Alfv\'en waves propagating in plasma and fluid dynamics.

In this paper, we investigate an extended modified Korteweg-de Vries (emKdV) equation \cite{WX}, which takes the form
\begin{equation}\label{1.1}
u_t+\alpha(6u^2u_x+u_{xxx})+\beta(30u^4u_x+10u_x^3+40uu_xu_{xx}+10u^2u_{xxx}+u_{xxxxx})=0,
\end{equation}
where $\alpha>0$ and $\beta>0$ stand for the third- and fifth-order dispersion coefficients matching with the relevant
nonlinear terms, respectively. Moreover, \eqref{1.1} also has certain application for the description of nonlinear internal waves in a fluid stratified by both density and current \cite{GPP,PPL}. Equation \eqref{1.1} is integrable, the infinitely many conservation laws have been constructed based on the Lax pair, meanwhile, periodic and rational solutions have been also obtained by means of the $N$-fold Darboux transformation in a recent paper \cite{WX}. The Painlev\'e test and multi-soliton solutions via the simplified Hirota direct method for equation \eqref{1.1} have been recently studied in \cite{AMW}. However, it is noted that the long-time asymptotics for the emKdV equation \eqref{1.1} on the line were not analyzed to the best of our knowledge.

In particular, the purpose of present paper is to consider the initial-value problem (IVP) for the emKdV equation \eqref{1.1} on the line by a Riemann--Hilbert (RH) approach. Assuming that the initial data $u(x,0)=u_0(x)$ are smooth and decay sufficiently fast as $|x|\rightarrow\infty$, that is, $u_0(x)\in\mathcal{S}(\bfR)$,  one then can show that the solution $u(x,t)$ of the IVP for \eqref{1.1} can be represented in terms of the solution of a $2\times2$ matrix RH problem formulated in the complex $k$-plane with the jump matrices given in terms of two spectral functions $a(k)$, $b(k)$ obtained from the initial value $u_0(x)$. Then, this representation obtained allows us to apply the nonlinear steepest descent method for the associated RH problem and to obtain a detailed description for the leading term of the asymptotics of the solution for the Cauchy problem.

The nonlinear steepest descent method was first introduced in 1993 by Deift and Zhou \cite{PD}, where they derived the long-time asymptotics for the IVP for the mKdV equation \eqref{1.2} with $\sigma=-1$. It then turns out to be very successful for analyzing the long-time asymptotics of IVPs for a large range of nonlinear integrable evolution equations in a rigorous and transparent form. Numerous new significant results about the asymptotics theory of initial-value and initial-boundary value problems for different completely integrable nonlinear equations were obtained based on the analysis of the corresponding RH problems \cite{GB1,AB2,AB3,RB,XJ1,PD1,XJ,HL,ZQZ,JL1,GL1,GL2,GL3,F3,AB,JL2,DS}.

Developing and extending the methods used in \cite{PD,JL3}, our goal here is to explore the long-time asymptotics of the solution $u(x,t)$ for the emKdV equation \eqref{1.1} on the line. Compared with other integrable equations, the long-time asymptotic analysis for \eqref{1.1} presents some distinctive features. For example, the spectral curve of emKdV equation \eqref{1.1} is more involved and it  possesses four stationary points, which is different from that of mKdV equation and Hirota equation considered in \cite{PD,HL} where the phase function has only two critical points. We note that in the case of the Camassa--Holm equation \cite{ABCH,ABKST}, there is a sector $-\frac{1}{4}+C<c=\frac{x}{\aleph t}<0$ where the corresponding phase function also has four stationary points. Moreover, in the case of the Degasperis--Procesi equation \cite{AB3}, depending on the range of $x/t$, one can also have four stationary points. However, our main asymptotic analysis still presents many particular pictures different from these literatures (see Sections 3 and 4). Therefore, the study of the long-time asymptotics for the IVP for \eqref{1.1} on the line is more interesting. Our main results of this paper are summarized by the following theorems.
\begin{theorem}\label{the4.2}
Suppose that $u_0(x)$ lie in the Schwartz space $S(\bfR)$ and be such that no discrete spectrum is present. Then, for any positive constant $\varepsilon>0$, as $t\rightarrow\infty$, the solution $u(x,t)$ of the Cauchy problem for emKdV equation \eqref{1.1} on the line satisfies the following asymptotic formula
\be
u(x,t)=-\frac{u_{as}(x,t)}{\sqrt{t}}+O\bigg(\frac{\ln t}{t}\bigg),\quad t\rightarrow\infty,~\xi=\frac{x}{t}\in\bigg(-\frac{9\alpha^2}{20\beta}+\varepsilon,-\varepsilon\bigg),
\ee
where the error term is uniform with respect to $x$ in the given range, and the leading-order
coefficient $u_{as}(x,t)$ is given by
\be
\begin{aligned}
u_{as}(x,t)&=\sqrt{\frac{\nu(k_1)}{k_1(3\alpha-40\beta k_1^2)}}\cos\bigg(16tk_1^3(8\beta k_1^2-\alpha)\\
&-\nu(k_1)\ln(16t(k_2-k_1)^2(3\alpha k_1-40\beta k_1^3))+\phi_a(\xi)\bigg)\\
&+\sqrt{\frac{\nu(k_2)}{k_2(40\beta k_2^2-3\alpha)}}\cos\bigg(16tk_2^3(8\beta k_2^2-\alpha)\\
&+\nu(k_2)\ln(16t(k_2-k_1)^2(40\beta k_2^3-3\alpha k_2))+\phi_b(\xi)\bigg),
\end{aligned}
\ee
where
\bea
\phi_a(\xi)&=&-\frac{\pi}{4}-\arg r(k_1)+\arg\Gamma(\ii\nu(k_1))+2\nu(k_1)\ln\bigg(\frac{k_1+k_2}{2k_1}\bigg)\nn\\
&&-\frac{1}{\pi}\int_{k_1}^{k_2}\ln\bigg(\frac{1+|r(s)|^2}{1+|r(k_1)|^2}\bigg)
\bigg(\frac{1}{s-k_1}-\frac{1}{s+k_1}\bigg)\dd s,\nn\\
\phi_b(\xi)&=&\frac{\pi}{4}-\arg r(k_2)-\arg\Gamma(\ii\nu(k_2))+2\nu(k_2)\ln\bigg(\frac{2k_2}{k_1+k_2}\bigg)\nn\\
&&-\frac{1}{\pi}\int_{k_1}^{k_2}\ln\bigg(\frac{1+|r(s)|^2}{1+|r(k_2)|^2}\bigg)\bigg(\frac{1}{s-k_2}-\frac{1}{s+k_2}\bigg)\dd s,\nn
\eea
and $k_1,~k_2$, $\nu(k_1),~\nu(k_2)$ are defined by \eqref{3.6}, \eqref{3.7}, \eqref{3.43} and \eqref{3.45}, respectively.
\end{theorem}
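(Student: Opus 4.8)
The plan is to implement the Deift--Zhou nonlinear steepest descent method for the $2\times 2$ matrix Riemann--Hilbert problem whose solution encodes $u(x,t)$, following the blueprint of \cite{PD,JL3} but adapting it to the phase function associated with \eqref{1.1}. Writing the jump contour as the real $k$-axis with jump matrix built from the reflection coefficient $r(k)=\overline{b(k)}/a(k)$ and the oscillatory exponential $\e^{2it\theta(k)}$, where the phase $\theta(k;\xi)$ is a degree-six polynomial in $k$ determined by the dispersion relation of the emKdV equation, the first step is to locate its stationary points $\partial_k\theta=0$. For $\xi=x/t$ in the range $\bigl(-\tfrac{9\alpha^2}{20\beta},0\bigr)$ there are four real stationary points which, by the symmetry $k\mapsto -k$ of $\theta$, come in two pairs $\pm k_1,\pm k_2$ with $0<k_1<k_2$; these are exactly the quantities defined in \eqref{3.6}--\eqref{3.7}, and $\nu(k_j)=\tfrac{1}{2\pi}\ln(1+|r(k_j)|^2)$ are the associated parabolic-cylinder exponents from \eqref{3.43}, \eqref{3.45}. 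One then carries out the standard sequence of transformations: (i) a triangular factorization of the jump matrix together with a scalar function $\delta(k)$ solving a scalar RH problem on the bands between consecutive stationary points, to convert the oscillatory jump into one amenable to contour deformation; (ii) an explicit deformation of the contour onto the union of crosses through each $\pm k_j$ along the steepest-descent/ascent rays of $\theta$, absorbing the analytic continuation of $r$ into the off-diagonal entries and controlling the non-analytic remainder via a $\bar\partial$ or rational-approximation argument; (iii) localization, showing that away from small discs around the four stationary points the jump is $I+O(t^{-\infty})$ or at least negligible, so the problem decouples into four local parabolic-cylinder model problems.

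The second main step is to solve each local model problem. Near each $k_j$ the rescaled RH problem matches, after the change of variable $z = \sqrt{16t\,|\theta''(k_j)|/2}\,(k-k_j)$ (with the appropriate constant read off from $\theta''(k_j)=\pm(3\alpha k_j-40\beta k_j^3)\cdot\text{const}$), a classical parabolic-cylinder (Weber) model whose solution is written explicitly in terms of $\Gamma(\ii\nu(k_j))$ and parabolic-cylinder functions; this is where the $\arg\Gamma(\ii\nu(k_j))$ terms and the $\pm\pi/4$ phase shifts in $\phi_a,\phi_b$ originate. The logarithmic terms $\nu(k_j)\ln(16t(k_2-k_1)^2(\cdots))$ and the ratios $\ln\bigl(\tfrac{k_1+k_2}{2k_1}\bigr)$, $\ln\bigl(\tfrac{2k_2}{k_1+k_2}\bigr)$ enter through the behavior of the scalar factor $\delta(k)$ at $k=k_j$, where one must separate the singular part $(k-k_j)^{\pm\ii\nu(k_j)}$ from its regular part; the Cauchy-type integral $-\tfrac{1}{\pi}\int_{k_1}^{k_2}\ln\bigl(\tfrac{1+|r(s)|^2}{1+|r(k_j)|^2}\bigr)\bigl(\tfrac{1}{s-k_j}-\tfrac{1}{s+k_j}\bigr)\dd s$ is precisely the regularized value of that integral over the band, with the symmetric $k\mapsto -k$ contribution producing the $\tfrac{1}{s+k_j}$ term. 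Assembling the four local contributions and using the symmetry relation $r(-k)=-\overline{r(k)}$ (real initial data), the two pairs $\pm k_1$ and $\pm k_2$ combine into the two cosine terms of $u_{as}$, with $k_1$ contributing the first and $k_2$ the second.

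The final step is the error analysis: one shows the cumulative effect of all the estimates — the $t^{-\infty}$ decay away from the stationary points, the $O(t^{-1/2})$ matching error of each parabolic-cylinder parametrix against the true jump on the boundary of its disc, and the $O(t^{-1}\ln t)$ contribution from the interactions between the four widely-separated discs and from the non-analytic $\bar\partial$ remainder — yields a small-norm RH problem for the ratio of the exact solution to the global parametrix, solvable by a Neumann series, and that its contribution to $u(x,t)=\lim_{k\to\infty}2\ii k\,(m(x,t,k))_{12}$ (or the analogous reconstruction formula established earlier) is $O(\ln t/t)$, uniformly for $\xi$ in the stated sub-interval $\bigl(-\tfrac{9\alpha^2}{20\beta}+\varepsilon,-\varepsilon\bigr)$. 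I expect the main obstacle to be bookkeeping the four stationary points simultaneously: unlike the two-critical-point situation of \cite{PD,HL}, here the scalar $\delta$-function has two bands, each stationary point sees contributions from the others through the $\delta$-function and through the $(k_2-k_1)$-dependent scaling, and one must verify that the pairwise separation $|k_2-k_1|$ stays bounded below by a positive constant (hence the $\varepsilon$-truncation of the $\xi$-interval, away from the endpoints where two stationary points coalesce and a Painlev\'e-type transition occurs — treated separately in Theorem for $\alpha=0$). Keeping the phase assembly consistent across all four local problems, so that the $\pm\pi/4$, the $\arg r(k_j)$, the $\arg\Gamma$ and the two logarithmic corrections land exactly as in $\phi_a$ and $\phi_b$, is the delicate part; the rest is a by-now-standard, if lengthy, computation.
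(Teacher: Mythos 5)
Your proposal follows essentially the same route as the paper's own proof: the scalar $\delta$-function on the two bands $[-k_2,-k_1]\cup[k_1,k_2]$, the analytic/residual splitting of the reflection coefficient, contour deformation onto crosses through the four stationary points, four parabolic-cylinder local parametrices, and a small-norm/Neumann-series argument yielding the $O(\ln t/t)$ error. Two minor slips that do not affect the strategy: the phase $\theta=-k\xi+16\beta k^5-4\alpha k^3$ is a quintic (not sextic) polynomial, which is why $\theta'$ has exactly four real zeros, and the symmetry for real data is $r(-k)=\overline{r(k)}$ (equation \eqref{3.3}), without the minus sign you wrote.
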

\begin{remark}
For $\xi<-\frac{9\alpha^2}{20\beta}$, there are no real critical points for the phase function $\Phi(k)$. Thus, it is easy to proof that the solution $u(x,t)$ of emKdV equation \eqref{1.1} is rapidly decreasing as $t\rightarrow\infty$. However, for $\xi>0$, there are two different real stationary points $\pm k_0=\pm\sqrt{\frac{3\alpha}{40\beta}\bigg(1+\sqrt{1+\frac{20\beta\xi}{9\alpha^2}}\bigg)}$, this implies that it is possible to deform the RH problem through a series of transformations in exactly the same way as in the similarity region for the mKdV equation to find the asymptotics (one also can follow the strategy used in Section 3).
\end{remark}
\begin{theorem}\label{the4.3}
Under the assumptions of Theorem \ref{the4.2},  the solution $u(x,t)$ of equation \eqref{4.1}, i.e., $\alpha=0$ in emKdV equation \eqref{1.1}, satisfies the following asymptotic formula as $t\rightarrow\infty$:
\be
u(x,t)=\bigg(\frac{8}{5\beta t}\bigg)^{\frac{1}{5}}u_p\bigg(\frac{-x}{(20\beta t)^{\frac{1}{5}}}\bigg)+O(t^{-\frac{2}{5}}),\quad 0<x\leq Mt^{\frac{1}{5}},
\ee
where the formula holds uniformly with respect to $x$ in the given range for any fixed $M>1$ and the function $u_p(y)$ denotes the solution of the fourth order Painlev\'e II equation (A.5).
\end{theorem}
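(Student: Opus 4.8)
The plan is to run the self-similar (Painlev\'e) variant of the nonlinear steepest descent analysis, taking as input the $2\times2$ matrix RH problem for $M(x,t;k)$ constructed in Section 2 together with the reconstruction formula that recovers $u(x,t)$ from $\lim_{k\to\infty}(kM(x,t;k))_{12}$. When $\alpha=0$ the phase collapses to $\Phi(k)=\tfrac{x}{t}k-16\beta k^5$, so $\Phi'(k)=\tfrac{x}{t}-80\beta k^4$ vanishes at the four points $k=\pm(\tfrac{x}{80\beta t})^{1/4}$ and $k=\pm\ii(\tfrac{x}{80\beta t})^{1/4}$. For $0<x\le Mt^{1/5}$ all four lie within distance $O(t^{-1/5})$ of the origin and coalesce there as $x\downarrow0$, so $k=0$ is the only relevant spectral point: there is no room for the scalar RH conjugation ($\delta$-function) and the localization around separated critical points used in Section 3, and one must instead rescale about the origin.

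First I would set $z=(20\beta t)^{1/5}k$ and $y=-x/(20\beta t)^{1/5}$, so that, since $\Phi$ is exactly a polynomial here, $2\ii t\Phi(k)=-2\ii\big(\tfrac{4}{5}z^5+yz\big)$ identically, with $y$ confined to a bounded interval $[-c,0)$ throughout the region. Next I would deform the jump contour from $\bfR$ onto a contour $\Sigma$ of rays emanating from $0$, opening lenses via the upper/lower triangular factorizations of the jump matrix; the rays are chosen so that $\mathrm{Re}\,(2\ii t\Phi)$ yields exponential decay of the off-diagonal entries along each of them. Here the signature function behaves like $\mathrm{Re}(\ii z^5)$, which splits a neighborhood of $0$ into ten alternating sectors (against six in the cubic mKdV case), so $\Sigma$ carries more rays and the sign analysis is correspondingly heavier; on the portions of $\Sigma$ away from the origin the factor $\e^{2\ii t\Phi}$ is uniformly exponentially small and contributes negligibly. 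Along $\Sigma$ I would then replace $r(k)$ by the constant $r(0)$: since $r(k)-r(0)=O(k)=O(t^{-1/5}z)$ on the relevant scale while $\e^{2\ii t\Phi}$ stays bounded and decaying, this perturbs the jump by $O(t^{-1/5})$ uniformly in $y$.

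The rescaled, piecewise-constant RH problem that results is, by construction, the model RH problem attached in Appendix A to the fourth order Painlev\'e II equation (A.5): its jumps are constant matrices built from $r(0)$ and the exponent $2\ii(\tfrac{4}{5}z^5+yz)\sigma_3$, it has a unique solution $M^P(y;z)=I+M_1^P(y)z^{-1}+O(z^{-2})$ as $z\to\infty$, and the relevant entry of $M_1^P(y)$ is, up to the normalization fixed in the appendix, the Painlev\'e function $u_p(y)$. Undoing the rescaling in the reconstruction formula turns $\lim_{k\to\infty}(kM)_{12}$ into $(20\beta t)^{-1/5}\lim_{z\to\infty}(zM^P)_{12}$ up to lower-order error, and the numerical constants combine to the stated prefactor $(8/(5\beta t))^{1/5}$ with argument $-x/(20\beta t)^{1/5}$. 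For the error estimate I would invoke the standard small-norm argument: the ratio $R=M(M^P_{\mathrm{rescaled}})^{-1}$ solves a RH problem whose jump is $I+O(t^{-1/5})$ on the bounded part of $\Sigma$ and $I+O(\e^{-ct})$ on the tails, uniformly for $y\in[-c,0)$, hence $R=I+O(t^{-1/5})$ with $R_1=O(t^{-1/5})$; pushing this through the overall $t^{-1/5}$ prefactor yields the error $O(t^{-2/5})$, uniform in $0<x\le Mt^{1/5}$.

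The main obstacle splits into a laborious part and a substantive one. The laborious part is the contour geometry: with four critical points coalescing at the origin under a quintic phase, one must specify $\Sigma$ (together with all lens edges) and verify the sign of $\mathrm{Re}(2\ii t\Phi)$ on every ray uniformly as $x\downarrow0$, which is markedly more involved than the single cubic critical point picture of \cite{PD}. The substantive part is the analysis of the model problem itself: one needs a vanishing lemma guaranteeing that the fourth order Painlev\'e II RH problem of Appendix A is uniquely solvable for every $y\in[-c,0)$, with $M_1^P(y)$ bounded up to the endpoint $y\to0^-$ — this is exactly what upgrades the asymptotics to hold uniformly up to $x=Mt^{1/5}$, and it rests on the strict positivity $1-|r(0)|^2>0$, that is, on the assumed absence of discrete spectrum.
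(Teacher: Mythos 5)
Your overall architecture coincides with the paper's: rescale by $z=(20\beta t)^{1/5}k$, $y=-x/(20\beta t)^{1/5}$ so that $t\theta(k)=2\ii(\tfrac45 z^5+yz)$ exactly, open lenses using an analytic approximation $r=r_a+r_r$ (the paper's Lemma \ref{lemma4.1}), replace $r$ by $r(0)$ at a cost of $O(t^{-1/5})$ in the jump, match with the fourth order Painlev\'e II model of Appendix A, and close with a small-norm argument that yields the $O(t^{-2/5})$ error after the $(20\beta t)^{-1/5}$ prefactor; the constants combine to $(8/(5\beta t))^{1/5}$ exactly as in the paper. The only organizational difference is the local contour: the paper uses rays at angles $\pm\pi/6,\pm5\pi/6$ emanating from the two \emph{real} critical points $\pm k_0$ together with the segment $[-k_0,k_0]$ carrying the unfactorized jump (the contour $Z$ of Appendix B), and only afterwards deforms to the origin-centered cross $Y$; four rays suffice, so your ``ten alternating sectors'' remark overstates the required contour complexity, though the sign verification you describe is indeed the laborious part.

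There is, however, a genuine error in your justification of the step you correctly single out as substantive, namely the uniform solvability of the model RH problem. You assert that it rests on the strict positivity $1-|r(0)|^2>0$, attributed to the absence of discrete spectrum. First, no such bound is available: here $\det s(k)=|a(k)|^2+|b(k)|^2=1$, so $|r(0)|^2=|b(0)|^2/|a(0)|^2$ can be arbitrarily large, and a proof hinging on $|r(0)|<1$ would fail for generic Schwartz data. Second, no such bound is needed: on $Z\cap\bfR$ the model jump is $\bigl(\begin{smallmatrix}1+|s|^2 & * \\ * & 1\end{smallmatrix}\bigr)$, Hermitian with unit determinant and positive trace, hence positive definite for \emph{every} $s\in\bfC$, and off the real axis it satisfies $v^Z(z)=(v^Z)^\dagger(\bar z)$; this Schwarz-symmetry/positive-definiteness is precisely the hypothesis of the vanishing lemma invoked in Lemmas A.1 and B.1, which therefore hold with no smallness condition on $s=r(0)$. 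The ``no discrete spectrum'' assumption concerns the absence of zeros of $a(k)$ in the upper half-plane (so that no residue conditions appear), not the size of $r(0)$. A smaller slip: the endpoint $y\to0^-$ corresponds to $x\downarrow0$, not to $x=Mt^{1/5}$; uniformity over $0<x\le Mt^{1/5}$ requires boundedness of the model solution on the whole compact parameter set of (B.4), which is what Lemma B.1 supplies.
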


The organization of this paper is as follows. In Section 2, we show how the solution of emKdV equation \eqref{1.1} can be expressed in terms of the solution of a $2\times2$ matrix RH problem and give an auxiliary theorem which is useful for determining the long-time asymptotics. In Section 3, we derive the long-time asymptotic behavior of the solution of the emKdV equation \eqref{1.1} to prove our first main Theorem \ref{the4.2} in physically interesting region. In Section 4, we present the asymptotic formula of the solution to a particular case $\alpha=0$ of emKdV equation \eqref{1.1} in region $0<x\leq Mt^{\frac{1}{5}}$. A few facts related to the RH problem associated with the fourth order Painlev\'e II equation are collected in Appendix.
\section{Preliminaries}
\setcounter{equation}{0}
\setcounter{lemma}{0}
\setcounter{theorem}{0}
\subsection{Riemann--Hilbert formalism}
The Lax pair of equation \eqref{1.1} is \cite{WX}
\begin{equation}\label{2.1}
\begin{aligned}
\Psi_x&=X\Psi,\quad X=\ii k\sigma_3+U,\\
\Psi_t&=T\Psi,~\quad T=(-16\ii\beta k^5+4\ii\alpha k^3)\sigma_3+V,
\end{aligned}
\end{equation}
(namely, equation \eqref{1.1} is the compatibility condition $X_t-T_x+[X,T]=0$ of equation \eqref{2.1}), where $\Psi(x,t;k)$ is a $2\times2$ matrix-valued function, $k\in\bfC$ is the spectral parameter and
\begin{eqnarray}
&&\sigma_3={\left( \begin{array}{cc}
1 & 0 \\[4pt]
0 & -1\\
\end{array}
\right )},\quad
U={\left( \begin{array}{cc}
0 & u \\[4pt]
-u & 0 \\
\end{array}
\right )},\quad V={\left( \begin{array}{cc}
A & B \\[4pt]
C & -A \\
\end{array}
\right )},\label{2.2}\\
&&A=8\ii\beta k^3u^2-\ii(6\beta u^4+2\alpha u^2+4\beta uu_{xx}-2\beta u_x^2)k,\nn\\
&&B=-16\beta k^4u+8\ii\beta k^3u_x+(8\beta u^3+4\alpha u+4\beta u_{xx})k^2-\ii(12\beta u^2u_x+2\beta u_{xxx}+2\alpha u_x)k\nn\\
&&\qquad-6\beta u^5-2\alpha u^3-10\beta u^2u_{xx}-10\beta uu_x^2-\beta u_{xxxx}-\alpha u_{xx},\nn\\
&&C=-B+16\ii\beta k^3u_x-\ii (24\beta u^2u_x+4\alpha u_x+4\beta u_{xxx})k.\nn
\end{eqnarray}
Introducing a new eigenfunction $\mu(x,t;k)$ by
\begin{equation}\label{2.3}
\mu(x,t;k)=\Psi(x,t;k) \e^{-\ii[k x+(-16\beta k^5+4\alpha k^3)t]\sigma_3},
\end{equation}
we obtain the equivalent Lax pair
\begin{equation}\label{2.4}
\begin{aligned}
&\mu_x-\ii k[\sigma_3,\mu]=U\mu,\\
&\mu_t+\ii(16\beta k^5-4\alpha k^3)[\sigma_3,\mu]=V\mu.\\
\end{aligned}
\end{equation}
We now consider the spectral analysis of the $x$-part of \eqref{2.4}. Define two solutions $\mu_1$ and $\mu_2$ of the $x$-part of \eqref{2.4} by the following Volterra integral equations
\bea
\mu_1(x,t;k)&=&I+\int_{-\infty}^x\e^{\ii k(x-x')\hat{\sigma}_3}[U(x',t)\mu_1(x',t;k)]\dd x',\label{2.5}\\
\mu_2(x,t;k)&=&I-\int^{\infty}_x\e^{\ii k(x-x')\hat{\sigma}_3}[U(x',t)\mu_2(x',t;k)]\dd x',\label{2.6}
\eea
where $\hat{\sigma}_3$ acts on a $2\times2$ matrix $X$ by $\hat{\sigma}_3X=[\sigma_3,X]$, and $\e^{\hat{\sigma}_3}=\e^{\sigma_3}X\e^{-\sigma_3}$. We denote by $\mu^{(1)}$ and $\mu^{(2)}$ the columns of a $2\times2$ matrix $\mu=(\mu^{(1)}~\mu^{(2)})$. Then it follows from \eqref{2.5}-\eqref{2.6} that for all $(x,t)$:

(i) $\det\mu_j=1$, $j=1,2$.

(ii) $\mu^{(1)}_2$ and $\mu^{(2)}_1$ are analytic and bounded in $\{k\in\bfC|\mbox{Im}k>0\}$, and $(\mu^{(1)}_2~\mu^{(2)}_1)\rightarrow I$ as $k\rightarrow\infty.$

(iii) $\mu^{(1)}_1$ and $\mu^{(2)}_2$ are analytic and bounded in $\{k\in\bfC|\mbox{Im}k<0\}$, and $(\mu^{(1)}_1~\mu^{(2)}_2)\rightarrow I$ as $k\rightarrow\infty.$

(iv) $\{\mu_j\}_1^2$ are continuous up to the real axis.

(v) Symmetry:
\be\label{2.7}
\overline{\mu_j(x,t;\bar{k})}=\mu_j(x,t;-k)=\sigma_2\mu_j(x,t;k)\sigma_2,
\ee
where $\sigma_2$ is the second Pauli matrix,
\berr
\sigma_2=\begin{pmatrix}
0 & -\ii\\
\ii & 0\\
\end{pmatrix}.
\eerr
The symmetry relation \eqref{2.7} can be proved easily due to the symmetries of the matrix $X$:
$$\overline{X(x,t;\bar{k})}=X(x,t;-k)=\sigma_2X(x,t;k)\sigma_2.$$

The solutions of the system of differential equation \eqref{2.4} must be related by a matrix independent of $x$ and $t$, therefore,
\bea\label{2.8}
&\mu_1(x,t;k)=\mu_2(x,t;k)\e^{\ii[kx+(-16\beta k^5+4\alpha k^3)t]\hat{\sigma}_3}s(k),~\det s(k)=1,~k\in\bfR.
\eea
Evaluation at $x\rightarrow\infty,t=0$ gives
\be\label{2.9}
s(k)=\lim_{x\rightarrow\infty}\e^{-\ii kx\hat{\sigma}_3}\mu_1(x,0;k),
\ee
that is,
\be
s(k)=I+\int_{-\infty}^\infty\e^{-\ii kx\hat{\sigma}_3}[U(x,0)\mu_1(x,0;k)]\dd x.\label{2.10}
\ee
Due to the symmetry \eqref{2.7}, the matrix-valued spectral function $s(k)$ can be  defined in terms of two scalar spectral functions $a(k)$ and $b(k)$ by
\be\label{2.11}
s(k)=\begin{pmatrix}
\bar{a}(k) & b(k)\\[4pt]
-\bar{b}(k) & a(k)\\
\end{pmatrix},
\ee
where $\bar{a}(k)=\overline{a(\bar{k})}$ and $\bar{b}(k)=\overline{b(\bar{k})}$ indicate the Schwartz conjugates.
The spectral functions $a(k)$ and $b(k)$ can be determined by $u_0(x)$ through the solution of equation \eqref{2.10}.
On the other hand, $a(k)$ is analytic in the half-plane $\{k\in\bfC|\mbox{Im}k>0\}$ and continuous in $\{k\in\bfC|\mbox{Im}k\geq0\}$, and $a(k)\rightarrow1$ as $k\rightarrow\infty$.
Furthermore, $|a(k)|^2+|b(k)|^2=1$ for $k\in\bfR$. Finally, $a(-k)=\bar{a}(k),~b(-k)=\bar{b}(k)$.

Assuming $u(x,t)$ be a solution of equation \eqref{1.1}, the analytic properties of $\mu_j(x,t;k)$ stated above allow us to define a piecewise meromorphic, $2\times2$ matrix-valued function $M(x,t;k)$ by
\be\label{2.12}
M(x,t;k)=\left\{
\begin{aligned}
&\bigg(\frac{\mu_2^{(1)}(x,t;k)}{a(k)}~\mu_1^{(2)}(x,t;k)\bigg),\quad \text{Im}k>0,\\
&\bigg(\mu_1^{(1)}(x,t;k)~\frac{\mu_2^{(2)}(x,t;k)}{\bar{a}(k)}\bigg),\quad \text{Im}k<0.
\end{aligned}
\right.
\ee
Then, for each $x\in\bfR$ and $t\geq0$, the boundary values $M_{\pm}(x,t;k)$ of $M$ as $k$ approaches $\bfR$ from the sides $\pm\text{Im}k>0$ are related as follows:
\be\label{2.13}
M_+(x,t;k)=M_-(x,t;k)J(x,t;k),\quad k\in\bfR,
\ee
with
\be\label{2.14}
\begin{aligned}
&J(x,t;k)=\begin{pmatrix}
1+|r(k)|^2 ~& \bar{r}(k)\e^{-t\Phi(k)}\\[4pt]
r(k)\e^{t\Phi(k)} ~& 1
\end{pmatrix},\\
&r(k)=\frac{\bar{b}(k)}{a(k)},\quad \Phi(k)=2\ii(-k\frac{x}{t}+16\beta k^5-4\alpha k^3).
\end{aligned}
\ee
In view of the properties of $\mu_j(x,t;k)$ and $s(k)$, $M(x,t;k)$ also satisfies the following properties:\\
(i) Behavior at $k=\infty$:
\be\label{2.15}
M(x,t;k)\rightarrow I~\text{as}~k\rightarrow\infty.
\ee
(ii) Symmetry:
\be\label{2.16}
\overline{M(x,t;\bar{k})}=M(x,t;-k)=\sigma_2M(x,t;k)\sigma_2.
\ee
(iii) Residue conditions: Let $\{k_j\}_1^N$ be the set of zeros of $a(k)$. We assume these zeros are finite in number, simple  and no zero is real, then $M(x,t;k)$ satisfies the following residue conditions:
\be\label{2.17}
\begin{aligned}
&\text{Res}_{k=k_j}M^{(1)}(x,t;k)=\frac{\e^{t\Phi(k_j)}}{\dot{a}(k_j)b(k_j)}M^{(2)}(x,t;k_j)
=\ii\chi_j\e^{t\Phi(k_j)}M^{(2)}(x,t;k_j),\\
&\text{Res}_{k=\bar{k}_j}M^{(2)}(x,t;k)=-\frac{\e^{-t\Phi(\bar{k}_j)}}
{\overline{\dot{a}(k_j)}\overline{b(k_j)}}M^{(1)}(x,t;\bar{k}_j)
=\ii\bar{\chi}_j\e^{-t\Phi(\bar{k}_j)}M^{(1)}(x,t;\bar{k}_j).
\end{aligned}
\ee
\begin{theorem}\label{th1}
Let $\{r(k),\{k_j,\chi_j\}_1^N\}$ be the spectral data determined by $u_0(x)$, and define $M(x,t;k)$ as the solution of the associated RH \eqref{2.13} with the jump matrix \eqref{2.14}, the normalization condition \eqref{2.15} and the residue conditions \eqref{2.17}. Then, $M(x,t;k)$ exists and is unique. Define $u(x,t)$ in terms of $M(x,t;k)$ by
\bea\label{2.18}
u(x,t)=-2\ii\lim_{k\rightarrow\infty}(kM(x,t;k))_{12}.
\eea
Then $u(x,t)$ solves the emKdV equation \eqref{1.1}. Furthermore, $u(x,0)=u_0(x)$.
\end{theorem}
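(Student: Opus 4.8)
The plan is to prove four assertions in turn: $\det M\equiv1$, uniqueness, existence, and that the formula \eqref{2.18} produces a solution of \eqref{1.1} satisfying $u(x,0)=u_0(x)$.

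First, since $\det J(x,t;k)=1$ on $\bfR$ by \eqref{2.14}, the function $\det M$ has no jump across $\bfR$; and at each $k_j,\bar k_j$ the residue conditions \eqref{2.17} make the singular part of the column of $M$ that has a pole proportional to the other (regular) column of $M$ evaluated at that point, so the apparent simple pole of $\det M$ is cancelled by a simple zero. Thus $\det M$ extends to an entire function tending to $1$ at $k=\infty$ by \eqref{2.15}, and Liouville's theorem gives $\det M\equiv1$; in particular $M^{-1}$ exists off $\bfR\cup\{k_j,\bar k_j\}$. For uniqueness, if $M$ and $\widetilde M$ both solve the problem then $\widetilde M M^{-1}$ has no jump across $\bfR$ (as $\widetilde M_+M_+^{-1}=\widetilde M_-JJ^{-1}M_-^{-1}=\widetilde M_-M_-^{-1}$), has removable singularities at $k_j,\bar k_j$ by the same structure of \eqref{2.17}, is bounded, and tends to $I$ at $k=\infty$; hence $\widetilde M M^{-1}\equiv I$, i.e.\ $M=\widetilde M$.

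For existence I would pass, in the standard way, to the equivalent Beals--Coifman singular integral equation on $\bfR$ (if $\{k_j\}\ne\emptyset$, first trading the residue conditions for an augmented contour of small circles around $k_j,\bar k_j$, or conjugating the poles away by an explicit rational factor). The associated Cauchy operator is Fredholm of index zero, so solvability reduces to a vanishing lemma: the homogeneous problem --- same jump and residue data but $M\to0$ as $k\to\infty$ --- admits only the zero solution. Here the sign structure is decisive: because $\Phi(k)$ is purely imaginary on $\bfR$, the matrix $J(x,t;k)$ is Hermitian with $\operatorname{tr}J=2+|r|^2>0$ and $\det J=1$, hence positive definite. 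Given a homogeneous solution $m$ (take $N=0$ for brevity), the function $G(k):=m(k)\,m(\bar k)^{\dagger}$, where $\dagger$ is conjugate transpose and, for $\operatorname{Im}k>0$, the factor $m(\bar k)^{\dagger}$ is built from the lower half-plane piece of $m$, is analytic in the upper half-plane, decays like $k^{-2}$, and has boundary value $G_+(k)=m_-(k)\,J(x,t;k)\,m_-(k)^{\dagger}$ on $\bfR$. Closing the contour in the upper half-plane yields $\int_{\bfR}m_-\,J\,m_-^{\dagger}\,\dd k=0$; since $m_-Jm_-^{\dagger}=(m_-J^{1/2})(m_-J^{1/2})^{\dagger}$ is positive semidefinite pointwise, the integrand must vanish identically, forcing $m_-\equiv0$ on $\bfR$ and hence $m\equiv0$. (When $\{k_j\}\ne\emptyset$ the residue contributions are handled in the usual way; for the regions of Sections 3--4 no discrete spectrum is present.) This proves existence; moreover, since $r\in\mathcal S(\bfR)$, the solution $M$ depends smoothly on $(x,t)$ and, differentiating the integral equation, admits an asymptotic expansion $M(x,t;k)=I+M_1(x,t)/k+M_2(x,t)/k^2+\cdots$ with smooth coefficients as $k\to\infty$.

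That the resulting $u$ solves \eqref{1.1} is the dressing argument. Writing $J(x,t;k)=\e^{\ii\theta\hat\sigma_3}J_0(k)$ with $\theta=kx+(-16\beta k^5+4\alpha k^3)t$ and $J_0$ independent of $(x,t)$, one has $\partial_xJ=\ii k[\sigma_3,J]$ and $\partial_tJ=\ii(-16\beta k^5+4\alpha k^3)[\sigma_3,J]$, whence a short calculation shows that
\begin{gather*}
P_1:=\big(M_x+\ii k\,M\sigma_3-\ii k\,\sigma_3 M\big)M^{-1},\\
P_2:=\big(M_t+\ii(-16\beta k^5+4\alpha k^3)(M\sigma_3-\sigma_3 M)\big)M^{-1}
\end{gather*}
have no jump across $\bfR$ and, by \eqref{2.17}, no poles at $k_j,\bar k_j$. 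As $k\to\infty$ one has $P_1=O(1)$ and $P_2=O(k^4)$, so by Liouville $P_1$ equals a $k$-independent matrix $U(x,t)$ and $P_2$ equals a matrix polynomial $V(x,t;k)$ in $k$ of degree at most $4$. Matching the expansion at $k=\infty$ gives $U=-\ii[\sigma_3,M_1]$, hence $U_{12}=-2\ii(M_1)_{12}=u$, which is exactly \eqref{2.18} (and the symmetry \eqref{2.16} forces $M_1$ to be purely imaginary, so $u$ is real); inserting the expansion into $P_1M=UM$ yields the recursion that expresses $M_2,\dots,M_5$ through $u,u_x,\dots,u_{xxxx}$, and substituting these into the coefficients of $P_2$ reproduces precisely the matrix $V$ of \eqref{2.2}. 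Consequently $\Psi(x,t;k):=M(x,t;k)\e^{\ii\theta(x,t;k)\sigma_3}$ solves the Lax pair \eqref{2.1}; the compatibility $\Psi_{xt}=\Psi_{tx}$ holds automatically because $\Psi$ comes from the single function $M$, and the ensuing zero-curvature relation $X_t-T_x+[X,T]=0$ is precisely the emKdV equation \eqref{1.1} for $u$. Finally, at $t=0$ the eigenfunctions $\mu_1,\mu_2$ defined by \eqref{2.5}--\eqref{2.6} from $u_0$ assemble into a solution of the $t=0$ RH problem with the given spectral data; by the uniqueness just proved it coincides with $M(x,0;k)$, and reading off its $1/k$-coefficient gives $u(x,0)=u_0(x)$.

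The genuine obstacle is the existence step, i.e.\ the vanishing lemma: one must verify carefully that $G$ is pole-free (or, when a discrete spectrum is present, that the residue contributions at $k_j,\bar k_j$ carry a compatible sign), that the contribution of the large semicircle vanishes, and that the pointwise positive semidefiniteness of $m_-Jm_-^{\dagger}$ indeed forces $m\equiv0$. The remaining steps --- notably the verification that the coefficients of $P_2$ match the entries $A,B,C$ of \eqref{2.2} --- are lengthy but routine.
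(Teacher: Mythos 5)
Your proposal is correct and follows essentially the same route as the paper: the paper's proof simply cites a vanishing lemma (valid because $J^{\dagger}(\bar{k})=J(k)$, i.e.\ the positive-definiteness of $J$ on $\bfR$ that you exploit), the regularization of the poles, the dressing method, and uniqueness at $t=0$, and your write-up fills in exactly those steps. The Liouville arguments for $\det M\equiv1$, uniqueness, and the polynomial identification of $P_1,P_2$ with $U,V$ are the standard details behind the references the paper invokes, and they are carried out correctly.
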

\begin{proof}
In the case when $a(k)$ has no zeros, the existence and uniqueness for the solution of above RH problem is a consequence of a `vanishing lemma' for the associated RH problem with the vanishing condition at infinity $M(k)=O(1/k)$, $k\rightarrow\infty$ (see \cite{LF} since $J^\dag(\bar{k})=J(k)$). If $a(k)$ has zeros, the singular RH problem can be mapped to a regular one following the approach of \cite{F4}. Moreover, it follows from standard arguments using the dressing method \cite{Fokas} that if $M$ solves the above RH problem and $u(x,t)$ is defined by \eqref{2.18}, then $u(x,t)$ solves the emKdV equation \eqref{1.1}. One observes that for $t=0$, the RH problem reduces to that associated with $u_0(x)$, which yields $u(x,0)=u_{0}(x)$, owing to the uniqueness of the solution of the RH problem.
\end{proof}
\subsection{A model RH problem}
After the formulation of the main RH problem, the main idea of analysis of the long-time behavior is to reduce the original RH problem to a model RH problem which can be solved exactly. The following theorem is turned out suitable for determining asymptotics of a class of RH problems which arise in the study of long-time asymptotics.
\begin{figure}[htbp]
  \centering
  \includegraphics[width=3in]{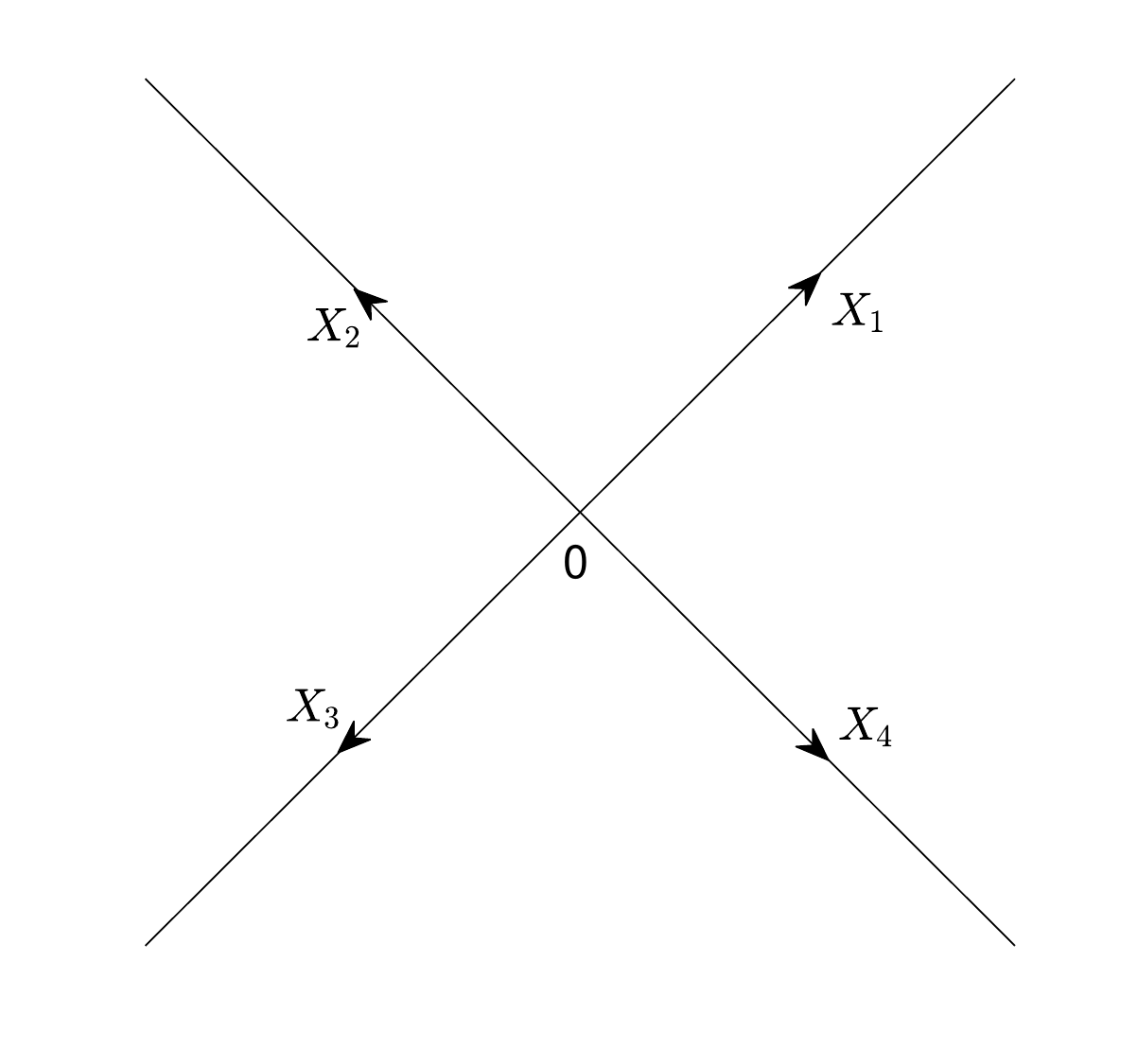}
  \caption{The contour $X=X_1\cup X_2\cup X_3\cup X_4$.}\label{fig1}
\end{figure}

Let $X=X_1\cup X_2\cup X_3\cup X_4\subset\bfC$ be the cross defined by
\be\label{2.19}
\begin{aligned}
X_1&=\{l\e^{\frac{\ii\pi}{4}}|0\leq l<\infty\},~~~ X_2=\{l\e^{\frac{3\ii\pi}{4}}|0\leq l<\infty\},\\
X_3&=\{l\e^{-\frac{3\ii\pi}{4}}|0\leq l<\infty\},~ X_4=\{l\e^{-\frac{\ii\pi}{4}}|0\leq l<\infty\},
\end{aligned}
\ee
and oriented as in Fig. \ref{fig1}. Define the function $\nu:\bfC\rightarrow(0,\infty)$ by $\nu(q)=\frac{1}{2\pi}\ln(1+|q|^2)$. We consider the following RH problems parametrized by $q\in\bfC$:
\be\label{2.20}\left\{
\begin{aligned}
&M^X_+(q,z)=M^X_-(q,z)J^X(q,z),~~z\in X,\\
&M^X(q,z)\rightarrow I,~~~\qquad\qquad\qquad~z\rightarrow\infty,
\end{aligned}
\right.
\ee
where the jump matrix $J^X(q,z)$ is defined by
\be\label{2.21}
J^X(q,z)=\left\{
\begin{aligned}
&\begin{pmatrix}
1 ~& 0\\[4pt]
q\e^{\frac{\ii z^2}{2}}z^{2\ii\nu(q)} ~& 1
\end{pmatrix},~~~\quad\qquad z\in X_1,\\
&\begin{pmatrix}
1 ~& -\frac{\bar{q}}{1+|q|^2}\e^{-\frac{\ii z^2}{2}}z^{-2\ii\nu(q)}\\[4pt]
0 ~& 1
\end{pmatrix},~~ z\in X_2,\\
&\begin{pmatrix}
1 ~& 0\\[4pt]
-\frac{q}{1+|q|^2}\e^{\frac{\ii z^2}{2}}z^{2\ii\nu(q)} ~& 1
\end{pmatrix},~~\quad z\in X_3,\\
&\begin{pmatrix}
1 ~& \bar{q}\e^{-\frac{\ii z^2}{2}}z^{-2\ii\nu(q)}\\[4pt]
0 ~& 1
\end{pmatrix},~~~\qquad~ z\in X_4.
\end{aligned}
\right.
\ee
Then we have the following theorem.
\begin{theorem}\label{th2}
The RH problem \eqref{2.20} has a unique solution $M^X(q,z)$ for each $q\in \bfC$. This solution satisfies
\be\label{2.22}
M^X(q,z)=I-\frac{\ii}{z}\begin{pmatrix}
0 ~& \beta^X(q)\\[4pt]
\overline{\beta^X(q)} ~& 0
\end{pmatrix}+O\bigg(\frac{q}{z^2}\bigg),\quad z\rightarrow\infty,~q\in\bfC,
\ee
where the error term is uniform with respect to $\arg z\in[0,2\pi]$ and the function $\beta^X(q)$ is given by
\be\label{2.23}
\beta^X(q)=\sqrt{\nu(q)}\e^{\ii\big(\frac{\pi}{4}-\arg q-\arg\Gamma(\ii\nu(q))\big)},\quad q\in\bfC,
\ee
where $\Gamma(\cdot)$ denotes the standard Gamma function. Moreover, for each compact subset $\mathcal{D}$ of $\bfC$,
\be\label{2.24}
\sup_{q\in\mathcal{D}}\sup_{z\in\bfC\setminus X}|M^X(q,z)|<\infty
\ee
and
\be\label{2.25}
\sup_{q\in\mathcal{D}}\sup_{z\in\bfC\setminus X}\frac{|M^X(q,z)-I|}{|q|}<\infty.
\ee
\end{theorem}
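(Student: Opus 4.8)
The plan is to solve the RH problem \eqref{2.20}–\eqref{2.21} in closed form by reducing it, after explicit conjugations, to the classical parabolic-cylinder model RH problem that appears in the Deift–Zhou analysis of the mKdV equation. First I would remove the ``long-range'' factors $z^{\pm 2\ii\nu(q)}$ by setting
\be
\widetilde M^X(q,z)=M^X(q,z)\,z^{-\ii\nu(q)\sigma_3}\e^{-\frac{\ii z^2}{4}\sigma_3},
\ee
which converts the jump matrix on each ray $X_j$ into a constant (i.e., $z$-independent) triangular matrix with off-diagonal entries built from $q$ and $\bar q/(1+|q|^2)$. The resulting $\widetilde M^X$ has a jump only across the four rays, it tends to $I$ up to the explicit scalar factor at infinity, and on $\bfR\setminus\{0\}$ the four triangular jumps multiply out — using $1+|q|^2$ — to the identity, so $\widetilde M^X$ extends analytically across $\bfR$. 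One then recognizes that $\Psi(z):=\widetilde M^X(q,z)\e^{\frac{\ii z^2}{4}\sigma_3}z^{\ii\nu(q)\sigma_3}$ satisfies a first-order ODE $\frac{d\Psi}{dz}=\bigl(-\tfrac{\ii z}{2}\sigma_3+\tfrac{1}{z}A(q)\bigr)\Psi$ with $z$-independent matrix $A(q)$, whose entries are read off from the large-$z$ expansion; this is exactly Weber's equation, solved by parabolic cylinder functions $D_a(\cdot)$.

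Next I would assemble $M^X$ explicitly from parabolic cylinder functions in each of the four sectors, fixing the constants by matching the prescribed jumps across $X_1,\dots,X_4$ and the normalization $M^X\to I$. The standard connection formulae for $D_a$ give, after taking $z\to\infty$ along any ray, the asymptotics \eqref{2.22}; the $(12)$-entry of the $1/z$ term is computed from the known constant in the parabolic-cylinder connection formula and equals
\be
\beta^X(q)=\frac{\nu(q)}{q}\,\frac{\sqrt{2\pi}\,\e^{\frac{\ii\pi}{4}}\e^{-\frac{\pi\nu(q)}{2}}}{\Gamma(\ii\nu(q))},
\ee
which, on using $|\Gamma(\ii\nu)|^2=\pi/(\nu\sinh(\pi\nu))$ together with $1+|q|^2=\e^{2\pi\nu(q)}$ to simplify the modulus, reduces to the stated formula \eqref{2.23}; the $(21)$-entry is its Schwartz conjugate by the symmetry $\overline{M^X(\bar q,\bar z)}=\sigma_2 M^X(q,z)\sigma_2$ inherited from \eqref{2.21}. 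Uniqueness follows from a vanishing-lemma argument: the jump matrix satisfies $(J^X)^\dagger(\bar z)=J^X(z)$ on the parts of $X$ lying in the lower/upper configuration and, more precisely, $J^X+(J^X)^\dagger$ is positive definite on $X$, so any solution of the homogeneous problem (with $O(1/z)$ at infinity) vanishes identically, whence the solution — if it exists — is unique; existence is furnished by the explicit parabolic-cylinder construction itself.

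Finally, for the uniform bounds \eqref{2.24}–\eqref{2.25} I would use that all ingredients depend analytically, hence continuously, on $q$: the matrix $A(q)$, the connection coefficients, and therefore $M^X(q,z)$ are continuous in $(q,z)$ on $\bfC\times(\bfC\setminus X)$, and $M^X(q,z)\to I$ uniformly for $q$ in compact sets as $z\to\infty$ by \eqref{2.22}. Hence $|M^X(q,z)|$ is bounded on $\mathcal D\times(\bfC\setminus X)$, giving \eqref{2.24}. For \eqref{2.25} I would note that $M^X(0,z)\equiv I$ (all jump matrices are the identity when $q=0$), so $q\mapsto M^X(q,z)-I$ vanishes at $q=0$ uniformly in $z$, and by analyticity in $q$ the difference quotient $(M^X(q,z)-I)/q$ extends continuously to $q=0$ and stays bounded on $\mathcal D\times(\bfC\setminus X)$. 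The main obstacle I anticipate is purely bookkeeping: tracking the four different off-diagonal coefficients and the branch of $z^{\ii\nu(q)}$ through the four sectors so that the constants in the parabolic-cylinder representation are pinned down consistently and the final constant in $\beta^X(q)$ comes out with the correct phase $\tfrac{\pi}{4}-\arg q-\arg\Gamma(\ii\nu(q))$; the analytic-dependence and vanishing-lemma parts are routine once the explicit formula is in hand.
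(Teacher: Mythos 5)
You should first note that the paper does not actually prove Theorem \ref{th2}: its ``proof'' is a one-line citation to \cite{PD,PD1,JL2}, and what you have outlined is precisely the classical parabolic-cylinder argument from those references (conjugate away $z^{\ii\nu\sigma_3}\e^{\ii z^2\sigma_3/4}$ to obtain constant jumps, deduce a linear ODE from the fact that the logarithmic derivative has no jumps, solve by parabolic cylinder functions, read off $\beta^X$ from the connection formulae, and obtain uniqueness from a vanishing lemma using $(J^X)^\dag(\bar z)=J^X(z)$). So methodologically you are reconstructing exactly the proof the authors delegate to the literature, and your treatment of the uniform bounds \eqref{2.24}--\eqref{2.25} (continuity in $q$, $M^X(0,z)\equiv I$, analyticity in $q$) is the standard and correct one.

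Three concrete points in your sketch would fail as written and need repair. First, your $\Psi:=\widetilde M^X\e^{\frac{\ii z^2}{4}\sigma_3}z^{\ii\nu\sigma_3}$ is identically equal to $M^X$, since the diagonal factors cancel; the object satisfying the ODE is $\widetilde M^X$ itself, and the coefficient matrix is $-\frac{\ii z}{2}\sigma_3+B(q)$ with a \emph{constant} off-diagonal $B(q)=-\frac{\ii}{2}[M^X_1,\sigma_3]$ obtained by Liouville's theorem --- a term $\frac{1}{z}A(q)$ would not give Weber's equation. Moreover the removability of the apparent simple pole of $\frac{d\widetilde M^X}{dz}(\widetilde M^X)^{-1}$ at $z=0$ (coming from $z^{\pm\ii\nu}$) requires a local boundedness argument at the self-intersection point of $X$; this is not mere bookkeeping. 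Second, your intermediate constant is wrong: using $|\Gamma(\ii\nu)|^2=\pi/(\nu\sinh\pi\nu)$ and $1+|q|^2=\e^{2\pi\nu}$, the modulus of $\frac{\nu}{q}\frac{\sqrt{2\pi}\,\e^{\ii\pi/4}\e^{-\pi\nu/2}}{\Gamma(\ii\nu)}$ equals $\nu^{3/2}(1+|q|^2)^{-1/2}$, not $\sqrt{\nu}$, so the advertised simplification to \eqref{2.23} cannot go through; for the $1+|q|^2$ normalization used here the constant must be of the form $\frac{\sqrt{2\pi}\,\e^{\ii\pi/4}\,\e^{\pi\nu(q)/2}}{q\,\Gamma(\ii\nu(q))}$ (note $\e^{+\pi\nu/2}$ and no extra factor of $\nu$), which does have modulus $\sqrt{\nu(q)}$ and argument $\frac{\pi}{4}-\arg q-\arg\Gamma(\ii\nu(q))$. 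Third, the claim that the jumps ``multiply out to the identity on $\bfR\setminus\{0\}$'' is misplaced: the contour $X$ does not contain $\bfR$; the actual consistency condition is that the product of the four constant triangular factors around the origin matches the multiplicative jump $(1+|q|^2)^{\sigma_3}=\e^{2\pi\nu\sigma_3}$ of $z^{-\ii\nu\sigma_3}$ across its branch cut, which is exactly what forces $\nu(q)=\frac{1}{2\pi}\ln(1+|q|^2)$. With these corrections your argument is complete and coincides with the cited proofs.
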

\begin{proof}
For the proof of this theorem, we refer the readers to see \cite{PD,PD1,JL2}.
\end{proof}
\section{Long-time asymptotics }
\setcounter{equation}{0}
\setcounter{lemma}{0}
\setcounter{theorem}{0}
In this section, we aim to transform the associated original RH problem \eqref{2.15} to a solvable RH problem and then find the explicitly asymptotic formula for the emKdV equation \eqref{1.1}. In the following analysis, we suppose that $a(k)\neq0$ for $\{k\in\bfC|\text{Im}k\geq0\}$ so that no discrete spectrum is present. Namely, we consider the following RH problem
\be\label{3.1}
\left\{
\begin{aligned}
&M_+(x,t;k)=M_-(x,t;k)J(x,t;k),~k\in \bfR,\\
&M(x,t;k)\rightarrow I,~\qquad\qquad\qquad\qquad~k\rightarrow\infty,
\end{aligned}
\right.
\ee
where the jump matrix $J(x,t;k)$ is defined by
\be\label{3.2}
\begin{aligned}
&J(x,t;k)=\begin{pmatrix}
1+|r(k)|^2 ~& \bar{r}(k)\e^{-t\Phi(k)}\\[4pt]
r(k)\e^{t\Phi(k)} ~& 1
\end{pmatrix},\\
&r(k)=\frac{\bar{b}(k)}{a(k)},~ \Phi(k)=2\ii(-k\xi+16\beta k^5-4\alpha k^3),~ \xi=\frac{x}{t}.
\end{aligned}
\ee
In view of the symmetry relation in \eqref{2.7}, we conclude that
\be\label{3.3}
r(-k)=\overline{r(\bar{k})},\quad k\in\bfR.
\ee
Moreover, the relation between the solution $u(x,t)$ of the emKdV equation \eqref{1.1} and $M(x,t;k)$ is
\be\label{3.4}
u(x,t)=-2\ii\lim_{k\rightarrow\infty}(kM(x,t;k))_{12}.
\ee

The jump matrix $J$ defined in \eqref{3.2} involves the exponentials $\e^{\pm t\Phi}$, therefore,  the sign structure of the quantity Re$\Phi(k)$ plays an important role in the following analysis. In particular, we suppose \be\label{3.5}
-\frac{9\alpha^2}{20\beta}<\xi<0.
\ee
It follows that there are four different real stationary points located at the points where $\frac{\partial\Phi}{\partial k}=0$,
namely, at
\bea
\pm k_1&=&\pm\sqrt{\frac{3\alpha}{40\beta}\bigg(1-\sqrt{1+\frac{20\beta\xi}{9\alpha^2}}\bigg)},\label{3.6}\\
\pm k_2&=&\pm\sqrt{\frac{3\alpha}{40\beta}\bigg(1+\sqrt{1+\frac{20\beta\xi}{9\alpha^2}}\bigg)}\label{3.7}.
\eea
The signature table for Re$\Phi(k)$ is shown in Fig. \ref{fig2}.
\begin{figure}[htbp]
  \centering
  \includegraphics[width=4in]{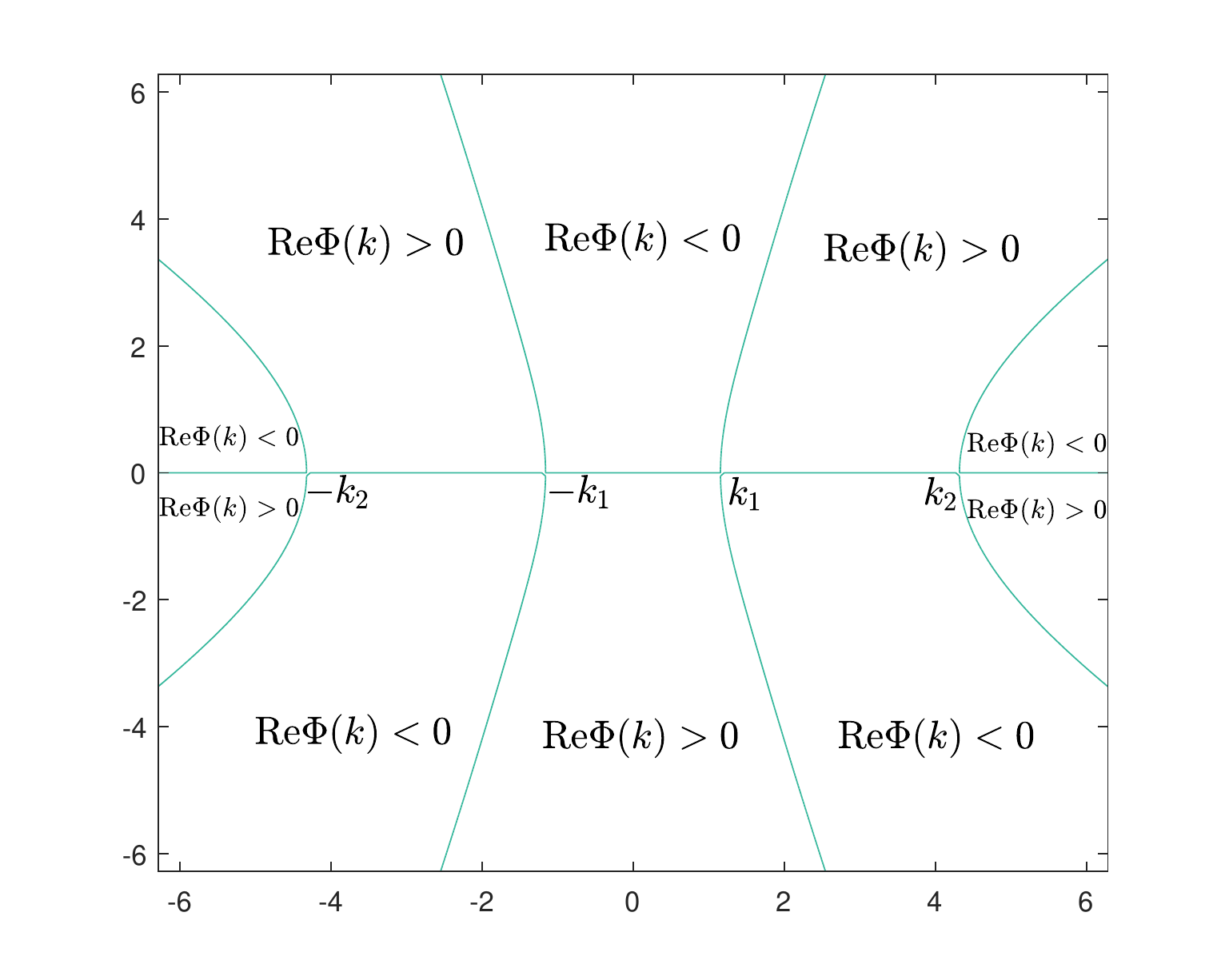}
  \caption{The signature table for Re$\Phi(k)$ in the complex $k$-plane.}\label{fig2}
\end{figure}

 Let $\varepsilon>0$ be given constant. We restrict our attention here to the physically interesting region $\xi\in\mathcal{I}=\big(-\frac{9\alpha^2}{20\beta}+\varepsilon,-\varepsilon\big)$.
\subsection{Transformations of the RH problem}
One goes from the original RH problem \eqref{3.1} for $M$ to the equivalent RH problem for the new function $M^{(1)}$ defined by
\be\label{3.8}
M^{(1)}(x,t;k)=M(x,t;k)\delta^{-\sigma_3}(k),
\ee
where the complex-valued function $\delta(k)$ is given by
\be\label{3.9}
\delta(k)=\exp\bigg\{\frac{1}{2\pi\ii}\bigg(\int_{-k_2}^{-k_1}+\int^{k_2}_{k_1}\bigg)\frac{\ln(1+|r(s)|^2)}{s-k}\dd s\bigg\},~ k\in\bfC\setminus([-k_2,-k_1]\cup[k_1,k_2]).
\ee
\begin{lemma}\label{lem1}
The function $\delta(k)$ has the following properties:

(i) $\delta(k)$ satisfies the following jump condition across the real axis oriented from $-\infty$ to $\infty$:
\berr
\delta_+(k)=\delta_-(k)(1+|r(k)|^2),~~k\in(-k_2,-k_1)\cup(k_1,k_2).
\eerr

(ii) As $k\rightarrow\infty$, $\delta(k)$ satisfies the asymptotic formula
\be\label{3.10}
\delta(k)=1+O(k^{-1}),\quad k\rightarrow\infty.
\ee

(iii) $\delta(k)$ and $\delta^{-1}(k)$ are bounded and analytic functions of $k\in\bfC\setminus([-k_2,-k_1]\cup[k_1,k_2])$ with continuous boundary values on $(-k_2,-k_1)\cup(k_1,k_2)$.

(iv) $\delta(k)$ obeys the symmetry $$\delta(k)=\overline{\delta(\bar{k})}^{-1},\quad k\in\bfC\setminus([-k_2,-k_1]\cup[k_1,k_2]).$$
\end{lemma}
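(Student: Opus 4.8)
The plan is to verify each of the four properties of $\delta(k)$ in turn, all of which follow from the standard theory of scalar Cauchy integrals (the Plemelj formula) applied to the function
$$
\delta(k)=\exp\bigg\{\frac{1}{2\pi\ii}\int_{\Sigma}\frac{\ln(1+|r(s)|^2)}{s-k}\,\dd s\bigg\},\qquad \Sigma=[-k_2,-k_1]\cup[k_1,k_2].
$$
First I would record that the integrand is well defined: since $u_0\in\mathcal{S}(\bfR)$ and $a(k)\neq0$ for $\mathrm{Im}\,k\geq0$, the function $r(k)$ is smooth on $\bfR$, so $\ln(1+|r(s)|^2)$ is a smooth, real, nonnegative, rapidly decaying function of $s$, in particular integrable on the compact set $\Sigma$. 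Hence the Cauchy integral defines an analytic function of $k$ off $\Sigma$, which gives analyticity in part (iii); boundedness away from the endpoints is immediate, and near the endpoints one uses that $\int_{\Sigma}\frac{\ln(1+|r(s)|^2)}{s-k}\dd s$ has at worst a logarithmic singularity (because $\ln(1+|r(s)|^2)$ is Hölder, indeed smooth), so its exponential — and likewise $\delta^{-1}$, which is the exponential of minus the same integral — stays bounded. Continuity of the boundary values on the open arcs of $\Sigma$ follows from the Plemelj–Sokhotski theorem for Hölder densities.

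For part (i), I would apply the Plemelj formula directly: writing $g(s)=\frac{1}{2\pi\ii}\ln(1+|r(s)|^2)$ for the density, the boundary values from above and below on $\Sigma$ satisfy
$$
\bigg(\int_{\Sigma}\frac{g(s)}{s-k}\dd s\bigg)_{+}-\bigg(\int_{\Sigma}\frac{g(s)}{s-k}\dd s\bigg)_{-}=2\pi\ii\, g(k)=\ln(1+|r(k)|^2),\qquad k\in(-k_2,-k_1)\cup(k_1,k_2),
$$
so exponentiating gives $\delta_+(k)=\delta_-(k)\,(1+|r(k)|^2)$ there; off $\Sigma$ (in particular on $\bfR\setminus\Sigma$, where the orientation of the real axis is used) the integrand has no jump, so $\delta$ is continuous across and the ``jump'' is trivial. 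Part (ii) is the large-$k$ expansion of a compactly supported Cauchy integral: $\frac{1}{s-k}=-\frac1k-\frac{s}{k^2}-\cdots$ uniformly for $s\in\Sigma$, hence $\int_{\Sigma}\frac{g(s)}{s-k}\dd s=-\frac{1}{k}\int_{\Sigma}g(s)\,\dd s+O(k^{-2})=O(k^{-1})$, and so $\delta(k)=\e^{O(k^{-1})}=1+O(k^{-1})$. For part (iv), I would use the symmetry $r(-k)=\overline{r(\bar k)}$ from \eqref{3.3}, which gives $|r(-s)|^2=|r(s)|^2$ for real $s$, so that $\ln(1+|r(s)|^2)$ is even; combining this with the change of variables $s\mapsto-s$ (which maps $\Sigma$ to itself, reversing the two arcs) and conjugating, one finds that the exponent of $\overline{\delta(\bar k)}$ equals minus the exponent of $\delta(k)$, i.e. $\delta(k)\overline{\delta(\bar k)}=1$.

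None of the steps is genuinely hard; the only point demanding a little care is the boundedness of $\delta^{\pm1}$ near the four endpoints $\pm k_1,\pm k_2$ in part (iii), since a priori a Cauchy integral of a nonzero density can blow up logarithmically there. The resolution is that $\ln(1+|r(s)|^2)$ is smooth and the endpoints are interior points of $\bfR$ (not points where $r$ is singular), so the standard estimate gives $\big|\int_{\Sigma}\frac{g(s)}{s-k}\dd s\big|\leq C\,(1+|\ln|k\mp k_j||)$ near each endpoint; the real part of the exponent is therefore bounded above and below by a constant multiple of $|\ln|k\mp k_j||$, and since $\nu$-type exponents here are purely controlled by this logarithm with a bounded coefficient, both $\delta$ and $\delta^{-1}$ remain bounded — more precisely one can factor $\delta(k)=(k-k_1)^{\ii\nu(k_1)}(k-k_2)^{-\ii\nu(k_2)}\cdots\times(\text{bounded analytic})$ using the explicit values $\nu(k_j)=\frac{1}{2\pi}\ln(1+|r(k_j)|^2)$, and $|(k-k_j)^{\pm\ii\nu(k_j)}|=\e^{\mp\nu(k_j)\arg(k-k_j)}$ is bounded since $\arg(k-k_j)$ ranges over a bounded set. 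This is the standard $\delta$-function analysis and I would cite \cite{PD,JL3} for the endpoint bookkeeping if a fuller treatment is wanted.
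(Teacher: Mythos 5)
Your proof is correct; the paper in fact states Lemma \ref{lem1} without proof, and your argument is exactly the standard Plemelj/Cauchy-integral reasoning it implicitly relies on, with the endpoint factorization $\delta(k)=(k\mp k_j)^{\pm\ii\nu(k_j)}\times(\text{bounded})$ that you invoke for part (iii) appearing explicitly later in the paper as equation \eqref{3.42}. One tiny remark: for part (iv) the conjugation of the exponent (using that $\ln(1+|r(s)|^2)$ is real and $\Sigma\subset\bfR$) already gives $\overline{\delta(\bar k)}=\delta(k)^{-1}$; the change of variables $s\mapsto-s$ together with evenness of $|r|^2$ is not needed for this identity (it instead yields the companion symmetry $\delta(-k)=\delta(k)^{-1}$), but its inclusion does no harm.
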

Then $M^{(1)}(x,t;k)$ satisfies the following RH problem
\be\label{3.11}
M^{(1)}_+(x,t;k)=M^{(1)}_-(x,t;k)J^{(1)}(x,t;k),\quad k\in\bfR,
\ee
with the jump matrix $J^{(1)}=\delta_-^{\sigma_3}J\delta_+^{-\sigma_3}$, namely,
\bea
J^{(1)}(x,t;k)=\left\{
\begin{aligned}
&\begin{pmatrix}
1 ~& r_4(k)\delta^{2}(k)\e^{-t\Phi(k)} \\[4pt]
0 ~& 1 \\
\end{pmatrix}\begin{pmatrix}
1 ~& 0 \\[4pt]
r_1(k)\delta^{-2}(k)\e^{t\Phi(k)} ~& 1 \\
\end{pmatrix},\quad |k|>k_2,~|k|<k_1,\\
&\begin{pmatrix}
1 ~& 0 \\[4pt]
r_3(k)\delta_-^{-2}(k)\e^{t\Phi}(k) ~& 1 \\
\end{pmatrix}\begin{pmatrix}
1 ~& r_2(k)\delta_+^{2}(k)\e^{-t\Phi(k)} \\[4pt]
0 ~& 1 \\
\end{pmatrix},\quad k_1<|k|<k_2,
\end{aligned}
\right.
\eea
where we define $\{r_j(k)\}_1^4$ by
\be\label{3.13}
\begin{aligned}
r_1(k)&=r(k),\qquad\qquad~~ r_2(k)=\frac{\bar{r}(k)}{1+r(k)\bar{r}(k)},\\
r_3(k)&=\frac{r(k)}{1+r(k)\bar{r}(k)},\quad r_4(k)=\bar{r}(k).
\end{aligned}
\ee
\begin{figure}[htbp]
  \centering
  \includegraphics[width=3.5in]{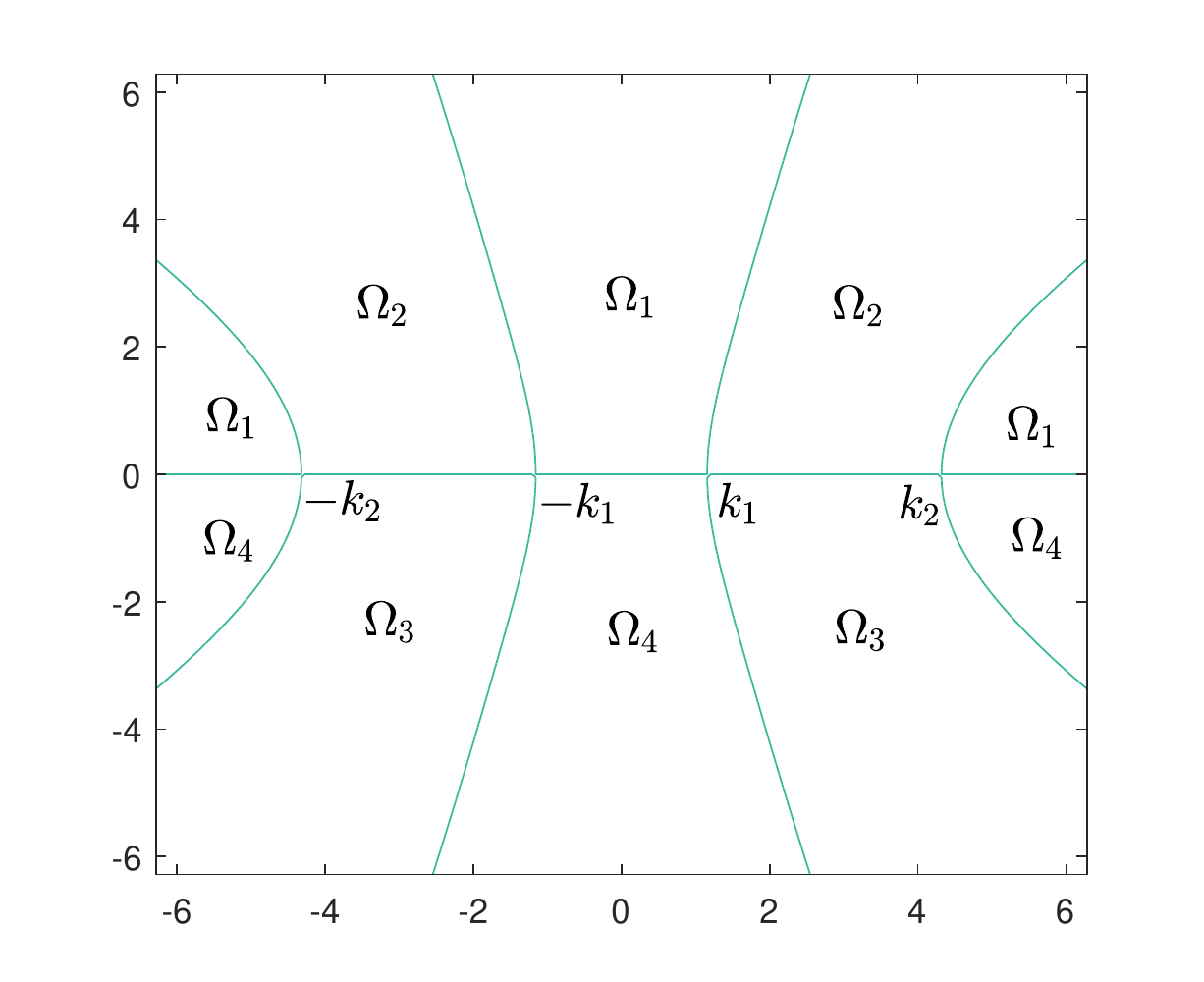}
  \caption{The open sets $\{\Omega_j\}_1^{4}$ in the complex $k$-plane.}\label{fig3}
\end{figure}

Before processing the next deformation, we first introduce analytic approximations of $\{r_j(k)\}_1^4$ following the idea of \cite{JL3}. We define the open subsets $\{\Omega_j\}_1^{4}$, as displayed in Fig. \ref{fig3} such that
\bea
\Omega_1\cup\Omega_3&=&\{k\in\bfC|\text{Re}\Phi(k)<0\},\nn\\
\Omega_2\cup\Omega_4&=&\{k\in\bfC|\text{Re}\Phi(k)>0\}.\nn
\eea
\begin{lemma}\label{lem2}
There exist decompositions
\be\label{3.14}
r_j(k)=\left\{
\begin{aligned}
&r_{j,a}(x,t,k)+r_{j,r}(x,t,k),\quad |k|>k_2,~|k|<k_1,~k\in\bfR,~j=1,4,\\
&r_{j,a}(x,t,k)+r_{j,r}(x,t,k),\quad k_1<|k|<k_2,~k\in\bfR,~j=2,3,
\end{aligned}
\right.
\ee
where the functions $\{r_{j,a}, r_{j,r}\}^4_1$ have the following properties:

(1) For $\xi\in\mathcal{I}$ and each $t>0$, $r_{j,a}(x,t,k)$ is defined and continuous for $k\in\bar{\Omega}_j$ and
analytic for $\Omega_j$, $j= 1,2,3,4$.

(2) The functions $r_{1,a}$ and $r_{4,a}$ satisfy, for~$\xi\in\mathcal{I},~t>0$,
\be\label{3.15}
|r_{j,a}(x,t,k)|\leq \frac{C}{1+|k|^2}\e^{\frac{t}{4}|\text{Re}\Phi(k)|},~k\in\bar{\Omega}_j\cap\{k\in\bfC||Rek|>k_2\},~j=1,4,
\ee
where the constant $C$ is independent of $\xi, k, t$.

(3) The $L^1, L^2$ and $L^\infty$ norms of the functions $r_{1,r}(x,t,\cdot)$ and $r_{4,r}(x,t,\cdot)$ on $(-\infty,-k_2)\cup(k_2,\infty)\cup(-k_1,k_1)$ are $O(t^{-3/2})$ as $t\rightarrow\infty$ uniformly with respect to $\xi\in\mathcal{I}$.

(4) The $L^1, L^2$ and $L^\infty$ norms of the functions $r_{2,r}(x,t,\cdot)$ and $r_{3,r}(x,t,\cdot)$ on $(-k_2,-k_1)\cup(k_1,k_2)$ are $O(t^{-3/2})$ as $t\rightarrow\infty$ uniformly with respect to $\xi\in\mathcal{I}$.

(5) For $j=1,2,3,4$, the following symmetries hold:
\be\label{3.16}
r_{j,a}(x,t,k)=\overline{r_{j,a}(x,t,-\bar{k})},~r_{j,r}(x,t,k)=\overline{r_{j,r}(x,t,-\bar{k})}.
\ee
\end{lemma}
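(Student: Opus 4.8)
The plan is to construct the decompositions $r_j = r_{j,a} + r_{j,r}$ by a localized Taylor expansion at the stationary points combined with a smooth cutoff, following the scheme of Deift--Zhou as adapted in \cite{JL3}. First I would record the decay and smoothness of each $r_j$: since $u_0\in\mathcal{S}(\bfR)$, the spectral functions $a(k),b(k)$ are smooth with $a(k)\to1$ and $b(k)$ Schwartz on $\bfR$, hence each $r_j(k)$ is smooth on its relevant interval and, together with all its derivatives, decays faster than any polynomial as $|k|\to\infty$; in particular $r_j\in H^\infty(\bfR)$ on the pieces in question. The four real stationary points are $\pm k_1,\pm k_2$; on $\mathcal{I}=(-\tfrac{9\alpha^2}{20\beta}+\varepsilon,-\varepsilon)$ these stay in a fixed compact set bounded away from $0$ and from each other, uniformly in $\xi$, which is what makes all constants uniform.

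Next, on each subinterval I would fix the relevant stationary point $k_\ast\in\{k_1,k_2\}$ (and by the symmetry \eqref{3.16} treat $-k_\ast$ simultaneously) and write a finite Taylor polynomial of $r_j$ about $k_\ast$, say of order $N$ large enough (order $4$ or $5$ suffices for the $O(t^{-3/2})$ bound, since $\Phi$ vanishes to second order at $k_\ast$ so $t^{1/2}$ is the natural local scale and one needs roughly three extra orders). The Taylor polynomial part, call it $P_j(k)$, is a rational/polynomial expression that extends analytically into $\Omega_j$; one multiplies it by a cutoff that is $1$ near $k_\ast$ and $0$ outside a fixed neighbourhood, but in order to keep analyticity one instead uses the standard device of writing $r_{j,a}$ as a contour-type analytic continuation: explicitly, set $r_{j,a}(x,t,k) = \sum_{m=0}^{N} \frac{r_j^{(m)}(k_\ast)}{m!}(k-k_\ast)^m \cdot h(k)$ on $\Omega_j$, where $h$ is an explicit analytic function (a ratio like $\big(\tfrac{k_\ast}{k}\big)^{M}$ or $e^{i c (k-k_\ast)^2}$-type factor tuned to the geometry of $\Omega_j$) chosen so that $r_{j,a}$ decays like $1/(1+|k|^2)$ on the unbounded rays and so that $|e^{-t\Phi/4}r_{j,a}|$ remains bounded along the deformed contour, giving \eqref{3.15}; and define $r_{j,r} = r_j - r_{j,a}$ on the real interval. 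The remainder $r_{j,r}$ vanishes to order $N+1$ at $k_\ast$, so near $k_\ast$ it is $O(|k-k_\ast|^{N+1})$ while away from $k_\ast$ it inherits the rapid decay of $r_j$ and of $P_j$; integrating against the relevant weight and using that $\mathrm{Re}\,\Phi$ is strictly signed off the stationary points, the $L^1,L^2,L^\infty$ norms on the indicated contour all come out $O(t^{-3/2})$, uniformly on $\mathcal{I}$. The symmetry \eqref{3.16} is then automatic: since $r(-k)=\overline{r(\bar k)}$ from \eqref{3.3}, the Taylor coefficients at $-k_\ast$ are the conjugates of those at $k_\ast$, and one simply defines the continuation on $\Omega_j$ by reflecting the one built near $k_\ast$, so $r_{j,a}(x,t,k)=\overline{r_{j,a}(x,t,-\bar k)}$ by construction and the same for $r_{j,r}$.

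The main obstacle is step (2)--(3): arranging the \emph{single} analytic function $r_{j,a}$ so that it simultaneously (a) is analytic on all of $\Omega_j$, whose boundary follows the level curves $\mathrm{Re}\,\Phi=0$ emanating from \emph{both} $k_1$ and $k_2$ (and their negatives), and (b) carries the exponential bound \eqref{3.15} with the gain $e^{t|\mathrm{Re}\,\Phi|/4}$ controlled against the full $e^{\pm t\Phi}$ in the jump, and (c) decays like $1/(1+|k|^2)$ at infinity. Because the phase here is quintic with four critical points, the regions $\Omega_j$ are more intricate than in the mKdV case, so the cutoff/continuation has to be performed piecewise near each of $k_1,k_2$ and then patched; the delicate point is to verify that on the part of $\partial\Omega_j$ between $k_1$ and $k_2$ the constructed $r_{j,a}$ still satisfies the bound, which forces the auxiliary analytic factor $h$ to be chosen with the geometry of the signature table (Fig.~\ref{fig2}) in mind. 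Once this geometric bookkeeping is done, the estimates themselves are the routine Taylor-remainder and stationary-phase bounds, and the $O(t^{-3/2})$ rate follows because we took $N$ large.
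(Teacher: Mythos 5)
Your overall framing (rational/Taylor approximation at the stationary points, analytic continuation into $\Omega_j$, symmetry to handle $-k_\ast$ and Schwartz conjugation to get $r_2,r_4$ from $r_3,r_1$) matches the paper's strategy, but there is a genuine gap at the heart of the construction: as you define it, $r_{j,r}=r_j-P_j h$ is a fixed function of $k$ with \emph{no dependence on $t$}, so its $L^1,L^2,L^\infty$ norms on the real intervals are constants and cannot be $O(t^{-3/2})$ as $t\to\infty$. On the real axis $\Phi$ is purely imaginary, so there is no exponential weight to integrate against; properties (3) and (4) of the lemma are statements about the norms of $r_{j,r}$ itself, not of $r_{j,r}\e^{\pm t\Phi}$ along a deformed contour. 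High-order vanishing at $k_\ast$ alone does not produce decay in $t$.

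The missing device, which is exactly what the paper (following Deift--Zhou and Lenells) supplies, is the Fourier-transform splitting in the phase variable. After subtracting a rational approximant $f_0(k)=\sum_{j=5}^{11}a_j(k-\ii)^{-j}$ that matches $r_1$ to sixth order at $k_2$ and decays at infinity (this also settles your concern (c) about decay like $1/(1+|k|^2)$ without any ad hoc factor $h$), one uses that $\phi(k)=-\ii\Phi(k)$ is a bijection of $(k_2,\infty)$ onto a half-line, writes $F(\phi)=\frac{(k-\ii)^3}{k-k_2}f(k)$ with $f=r_1-f_0$, checks $F\in H^2(\bfR)$, and represents $f(k)=\frac{k-k_2}{(k-\ii)^3}\int_\bfR\hat F(s)\e^{s\Phi(k)}\dd s$. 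The split at $s=-t/4$ is what makes the decomposition $t$-dependent: the piece with $s\geq -t/4$ extends analytically into $\Omega_1^1$ with the bound $\sup_{s\geq -t/4}\e^{s\,\mathrm{Re}\,\Phi(k)}\leq\e^{\frac{t}{4}|\mathrm{Re}\,\Phi(k)|}$ (giving \eqref{3.15}), while the piece with $s<-t/4$ is bounded by Cauchy--Schwarz using $\|s^2\hat F\|_{L^2}<\infty$, yielding the $O(t^{-3/2})$ rate. The order of Taylor matching is chosen precisely so that $F$ has two derivatives in $L^2$ of the $\phi$ variable. Note also that on $(-k_1,k_1)$ the paper first splits $r_1(k)=r_+(k^2)+kr_-(k^2)$ and expands in the variable $k^2$ about $k_1^2$, so that both endpoints $\pm k_1$ are handled by a single polynomial; your pointwise construction near each $k_\ast$ separately would need extra patching there. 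Without the Fourier splitting (or an equivalent $t$-dependent mechanism), your proof cannot establish properties (3) and (4).
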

\begin{proof}
We first consider the decomposition of $r_1(k)$. Denote $\Om_1=\Om_1^1\cup\Om_1^2\cup\Om_1^3$, where $\Om_1^1$, $\Om_1^2$ and $\Om_1^3$ denote the parts of $\Om_1$ in $\{k\in\bfC|\text{Re}k>k_2\}$, $\{k\in\bfC|\text{Re}k<-k_2\}$ and the remaining part, respectively. We first derive a decomposition of $r_1(k)$ in $\Om_1^1$, and then extend it to $\Om_1^2$ by symmetry. Then, we derive a decomposition of $r_1(k)$ in $\Om_1^3$. Since $u_0(x)\in\mathcal{S}(\bfR)$, this implies that $r_1(k)=r(k)\in \mathcal{S}(\bfR)$. Then for $n=0,1,2$, we have
\bea
r_1^{(n)}(k)&=&\frac{\dd^n}{\dd k^n}\bigg(\sum_{j=0}^6\frac{r_1^{(j)}(k_2)}{j!}(k-k_2)^j\bigg)+O((k-k_2)^{7-n}),~~ k\rightarrow k_2,\label{3.17}
\eea
Let
\be\label{3.18}
f_0(k)=\sum_{j=5}^{11}\frac{a_j}{(k-\ii)^j},
\ee
where $\{a_j\}_5^{11}$ are complex constants such that
\be\label{3.19}
f_0(k)=\sum_{j=0}^6\frac{r_1^{(j)}(k_2)}{j!}(k-k_2)^j+O((k-k_2)^7),~~k\rightarrow k_2.
\ee
It is easy to verify that \eqref{3.19} imposes seven linearly independent conditions on the $a_j$, hence the coefficients $a_j$ exist and are unique. Letting $f=r_1-f_0$, it follows that \\
(i) $f_0(k)$ is a rational function of $k\in\bfC$ with no poles in $\Om_1^1$;\\
(ii) $f_0(k)$ coincides with $r_1(k)$ to six order at $k_2$, more precisely,
\be\label{3.20}
\frac{\dd^n}{\dd k^n}f(k)=\left\{
\begin{aligned}
&O((k-k_2)^{7-n}),~ k\rightarrow k_2,\\
&O(k^{-5-n}),~~~~~\quad k\rightarrow\infty,
\end{aligned}
\quad k\in\bfR,~n=0,1,2.
\right.
\ee
\begin{figure}[htbp]
  \centering
  \includegraphics[width=4in]{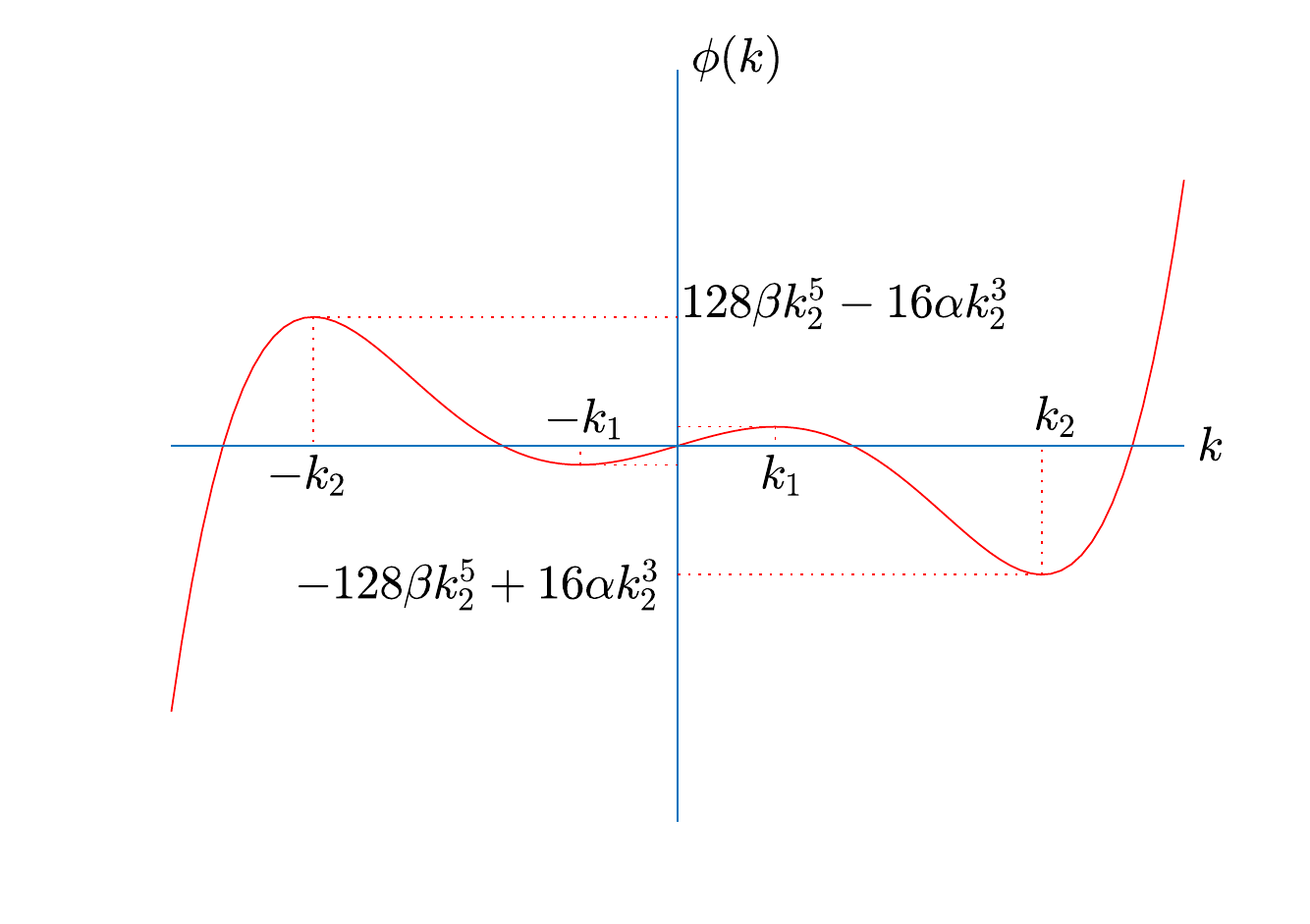}
  \caption{The graph of the function $\phi(k)$ defined in \eqref{3.21}.}\label{fig4}
\end{figure}
The decomposition of $r_1(k)$ can be derived as follows. The map $k\mapsto\phi=\phi(k)$ defined by
\be\label{3.21}
\phi(k)=-\ii\Phi(k)=2(16\beta k^5-4\alpha k^3-\xi k)
\ee
is a bijection $(k_2,\infty)\mapsto(-128\beta k_2^5+16\alpha k_2^3,\infty)$ (see Fig. \ref{fig4}), so we may define a function $F$ by
\be\label{3.22}
F(\phi)=\left\{
\begin{aligned}
&\frac{(k-\ii)^3}{k-k_2}f(k),~~\phi>-128\beta k_2^5+16\alpha k_2^3,\\
&0,\qquad\qquad~~~~~ \phi\leq-128\beta k_2^5+16\alpha k_2^3.
\end{aligned}
\right.
\ee
Then,
\berr
F^{(n)}(\phi)=\bigg(\frac{1}{160\beta(k^2-k_2^2)(k^2+k_2^2-\frac{3\alpha}{20\beta})}\frac{\partial}{\partial k}\bigg)^n\bigg(\frac{(k-\ii)^3}{k-k_2}f(k)\bigg),~~\phi>-128\beta k_2^5+16\alpha k_2^3.
\eerr
By \eqref{3.20}, $F^{(n)}(\phi)=O(|\phi|^{-3/5})$ as $|\phi|\rightarrow\infty$ for $n=0,1,2$. In particular,
\be\label{3.23}
\bigg\|\frac{\dd^nF}{\dd\phi^n}\bigg\|_{L^2(\bfR)}<\infty,\quad n=0,1,2,
\ee
that is, $F$ belongs to $H^2(\bfR)$. By the Fourier transform $\hat{F}(s)$ defined by
\berr
\hat{F}(s)=\frac{1}{2\pi}\int_\bfR F(\phi)\e^{-\ii\phi s}\dd\phi
\eerr
where
\be\label{3.24}
F(\phi)=\int_\bfR\hat{F}(s)\e^{\ii\phi s}\dd s,
\ee
it follows from Plancherel theorem that $\|s^2\hat{F}(s)\|_{L^2(\bfR)}<\infty$. Equations \eqref{3.22} and \eqref{3.24} imply
\be\label{3.25}
f(k)=\frac{k-k_2}{(k-\ii)^3}\int_\bfR\hat{F}(s)\e^{\ii\phi s}\dd s,\quad k>k_2.
\ee
Writing
$$f(k)=f_a(x,t,k)+f_r(x,t,k),\quad t>0,~k>k_2,$$
where the functions $f_a$ and $f_r$ are defined by
\bea
f_a(x,t,k)&=&\frac{k-k_2}{(k-\ii)^3}\int_{-\frac{t}{4}}^\infty\hat{F}(s)\e^{s\Phi(k)}\dd s,~\quad t>0,~k\in\Om^1_1,\nn\\
f_r(x,t,k)&=&\frac{k-k_2}{(k-\ii)^3}\int^{-\frac{t}{4}}_{-\infty}\hat{F}(s)\e^{s\Phi(k)}\dd s,\quad t>0,~k>k_2,\nn
\eea
we infer that $f_a(x,t,\cdot)$ is continuous in $\bar{\Om}^1_1$ and analytic in $\Om^1_1$. Moreover, we can get
\bea\label{3.26}
|f_a(x,t,k)|&\leq&\frac{|k-k_2|}{|k-\ii|^3}\|\hat{F}(s)\|_{L^1(\bfR)}\sup_{s\geq-\frac{t}{4}}\e^{s\text{Re}\Phi(k)}\nn\\
&\leq&\frac{C|k-k_2|}{|k-\ii|^3}\e^{\frac{t}{4}|\text{Re}\Phi(k)|},\quad t>0,~k\in\bar{\Om}^1_1,~\xi\in\mathcal{I}.
\eea
Furthermore, we have
\bea\label{3.27}
|f_r(x,t,k)|&\leq&\frac{|k-k_2|}{|k-\ii|^3}\int_{-\infty}^{-\frac{t}{4}}s^2|\hat{F}(s)|s^{-2}\dd s\nn\\
&\leq&\frac{C}{1+|k|^2}\|s^2\hat{F}(s)\|_{L^2(\bfR)}\sqrt{\int_{-\infty}^{-\frac{t}{4}}s^{-4}\dd s},\\
&\leq&\frac{C}{1+|k|^2}t^{-3/2},\quad t>0,~k>k_2,~\xi\in\mathcal{I}.\nn
\eea
Hence, the $L^1,L^2$ and $L^\infty$ norms of $f_r$ on $(k_2,\infty)$ are $O(t^{-3/2})$. Letting
\bea
\begin{aligned}
r_{1,a}(x,t,k)&=f_0(k)+f_a(x,t,k),~\quad t>0,~ k\in\bar{\Om}_1^1,\label{3.28}\\
r_{1,r}(x,t,k)&=f_r(x,t,k),~~\qquad\qquad t>0,~ k>k_2.
\end{aligned}
\eea
For $k<-k_2$, we use the symmetry \eqref{3.16} to extend this decomposition.

We next derive the decomposition of $r_1(k)$ for $-k_1<k<k_1$. Following \cite{PD,JL3}, we split $r_1(k)$ into even and odd parts as follows:
\be\label{3.29}
r_1(k)=r_+(k^2)+kr_-(k^2),\quad k\in\bfR,
\ee
where $r_\pm:[0,\infty)\rightarrow\bfC$ are defined by
\berr
r_+(s)=\frac{r_1(\sqrt{s})+r_1(-\sqrt{s})}{2},~r_-(s)=\frac{r_1(\sqrt{s})-r_1(-\sqrt{s})}{2\sqrt{s}},~s\geq0.
\eerr
We write $r_1(k)$ as the following form of Taylor series
\be
r_1(k)=\sum_{j=0}^{10}q_jk^j+\frac{1}{10!}\int_0^kr_1^{(11)}(t)(k-t)^{10}\dd t,~q_j=\frac{r_1^{(j)}(0)}{j!}.
\ee
It then follows that
\bea\label{3.31}
\begin{aligned}
r_+(s)&=\sum_{j=0}^{5}q_{2j}s^j+\frac{1}{2\times10!}\int_0^{\sqrt{s}}(r_1^{(11)}(t)-r_1^{(11)}(-t))(\sqrt{s}-t)^{10}\dd t,\\
r_-(s)&=\sum_{j=0}^{4}q_{2j+1}s^j+\frac{1}{2\times10!\sqrt{s}}\int_0^{\sqrt{s}}(r_1^{(11)}(t)+r_1^{(11)}(-t))(\sqrt{s}-t)^{10}\dd t.
\end{aligned}
\eea
 Letting $\{p_j^\pm\}_0^4$ denote the coefficients of the Taylor series representations
\berr
r_\pm(k^2)=\sum_{j=0}^4p_j^\pm(k^2-k_1^2)^j+\frac{1}{4!}\int_{k_1^2}^{k^2}r_\pm^{(5)}(t)(k^2-t)^4\dd t,
\eerr
we infer that the function $f_0(k)$ defined by
\be
f_0(k)=\sum_{j=0}^4p_j^+(k^2-k_1^2)^j+k\sum_{j=0}^4p_j^-(k^2-k_1^2)^j
\ee
has the following properties:\\
(i) $f_0(k)$ is a polynomial in $k\in\bfC$ whose coefficients are bounded.\\
(ii) The difference $f(k)=r_1(k)-f_0(k)$, which satisfies
\be\label{3.33}
\frac{\dd^n}{\dd k^n}f(k)\leq C|k^2-k_1^2|^{5-n},~-k_1<k< k_1,~\xi\in\mathcal{I},~n=0,1,2,
\ee
where $C$ is independent of $\xi,k$. The decomposition of $r_1(k)$ for $-k_1<k< k_1$ can now be derived as follows.
Since the function $k\mapsto\phi$ defined in \eqref{3.21} is a bijection $(-k_1,k_1)\rightarrow(128\beta k_1^5-16\alpha k_1^3,-128\beta k_1^5+16\alpha k_1^3)$ (see Fig. \ref{fig4}), we may define a function $F(\phi)$ by
\be\label{3.34}
F(\phi)=\left\{
\begin{aligned}
&\frac{1}{k^2-k_1^2}f(k),~|\phi|<-128\beta k_1^5+16\alpha k_1^3,\\
&0,\qquad\qquad \quad |\phi|\geq-128\beta k_1^5+16\alpha k_1^3.
\end{aligned}
\right.
\ee
Thus, we have
\be\label{3.35}
F^{(n)}(\phi)=\bigg(\frac{1}{160\beta(k^2-k_1^2)(k^2+k_1^2-\frac{3\alpha}{20\beta})}\frac{\partial}{\partial k}\bigg)^n\frac{f(k)}{k^2-k_1^2},~|\phi|<-128\beta k_1^5+16\alpha k_1^3.
\ee
Equations \eqref{3.33} and \eqref{3.35} imply that
\berr
\bigg|\frac{\dd^n}{\dd\phi^n}F(\phi)\bigg|\leq C,~|\phi|<-128\beta k_1^5+16\alpha k_1^3,~n=0,1,2.
\eerr
Therefore, $F(\phi)$ satisfies \eqref{3.23}. On the other hand, \eqref{3.24} and \eqref{3.34} imply
\be
(k^2-k_1^2)\int_\bfR\hat{F}(s)\e^{s\Phi(k)}\dd s=\left\{
\begin{aligned}
&f(k),\quad |k|<k_1,\\
&0,\qquad ~|k|\geq k_1.
\end{aligned}
\right.
\ee
Letting
$$f(k)=f_a(x,t,k)+f_r(x,t,k),\quad t>0,~|k|k_1,~\xi\in\mathcal{I},$$
where the functions $f_a$ and $f_r$ are defined by
\bea
f_a(x,t,k)&=&(k^2-k^2_1)\int_{-\frac{t}{4}}^\infty\hat{F}(s)\e^{s\Phi(k)}\dd s,~\quad t>0,~k\in\Om^3_1,\nn\\
f_r(x,t,k)&=&(k^2-k^2_1)\int^{-\frac{t}{4}}_{-\infty}\hat{F}(s)\e^{s\Phi(k)}\dd s,\quad t>0,~|k|<k_1,\nn
\eea
we infer that $f_a(x,t,\cdot)$ is continuous in $\bar{\Om}^3_1$ and analytic in $\Om^3_1$. Moreover, we can get
from \eqref{3.26} and \eqref{3.27} that
\bea
|f_a(x,t,k)|&\leq&C|k^2-k^2_1|\e^{\frac{t}{4}|\text{Re}\Phi(k)|},\quad t>0,~k\in\bar{\Om}^3_1,~\xi\in\mathcal{I},\nn\\
|f_r(x,t,k)|&\leq&Ct^{-3/2},\qquad\qquad\qquad~ t>0,~|k|<k_1,~\xi\in\mathcal{I}.\nn
\eea
It follows  from
\berr
r_{1,a}(x,t,k)=f_0(k)+f_a(x,t,k),\quad r_{1,r}(x,t,k)=f_r(x,t,k)
\eerr
that one get a decomposition of $r_1(k)$ for $|k|<k_1$ with the properties listed in the statement of the lemma.
Thus, we find a decomposition of $r_1(k)$ for $|k|>k_2$ and $|k|<k_1$ with the properties listed in the statement of the lemma. The decomposition of the function $r_3(k)$ can be obtained by a similar procedure as the decomposition of $r_1(k)$ for $|k|<k_1$.

Finally, the decompositions of $r_2(k)$ and $r_4(k)$ can be obtained from $r_3(k)$ and $r_1(k)$ by Schwartz conjugation.
\end{proof}
\begin{figure}[htbp]
  \centering
  \includegraphics[width=3.5in]{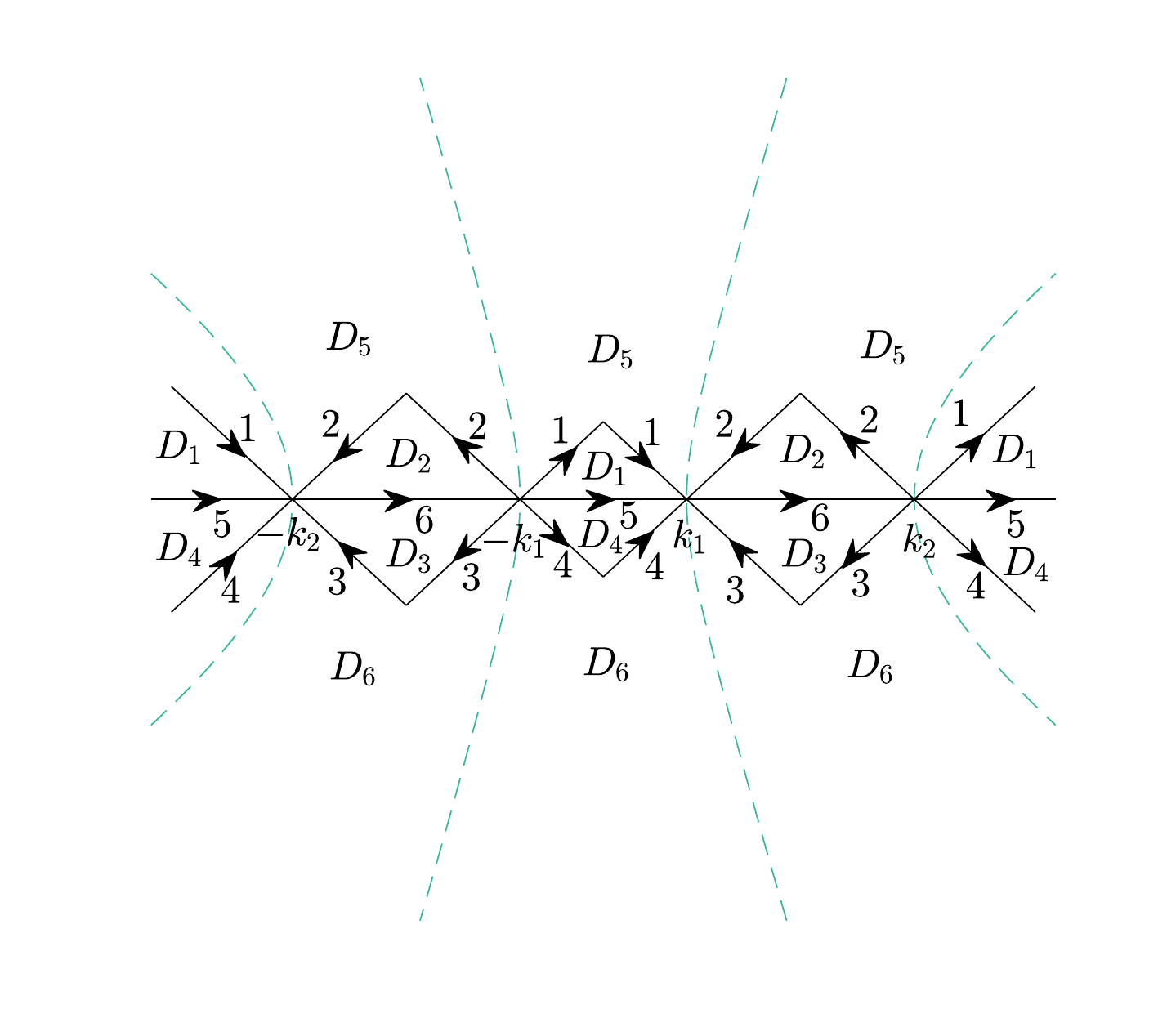}
  \caption{The oriented contour $\Sigma$ and the open sets $\{D_j\}_1^6$ in the complex $k$-plane.}\label{fig5}
\end{figure}

The purpose of the next transformation is to deform the contour so that the jump matrix involves the exponential factor $\e^{-t\Phi}$ on the parts of the contour where Re$\Phi$ is positive and the factor $\e^{t\Phi}$ on the parts where Re$\Phi$ is negative. More precisely, we put
\be\label{3.37}
M^{(2)}(x,t;k)=M^{(1)}(x,t;k)G(k),
\ee
where
\be\label{3.38}
G(k)=\left\{
\begin{aligned}
&\begin{pmatrix}
1 &~ 0\\[4pt]
-r_{1,a}\delta^{-2}\e^{t\Phi} ~& 1
\end{pmatrix},~~\quad k\in D_1,\\
&\begin{pmatrix}
1 ~& -r_{2,a}\delta^{2}\e^{-t\Phi}\\[4pt]
0 ~& 1
\end{pmatrix},\qquad~ k\in D_2,\\
&\begin{pmatrix}
1 ~& 0\\[4pt]
r_{3,a}\delta^{-2}\e^{t\Phi} ~& 1
\end{pmatrix},\qquad~~~ k\in D_3,\\
&\begin{pmatrix}
1 ~& r_{4,a}\delta^2\e^{-t\Phi}\\[4pt]
0 ~& 1
\end{pmatrix},~~~\qquad k\in D_4,\\
&I,~~~~~~~~\qquad\qquad\qquad\quad k\in D_5\cup D_6.
\end{aligned}
\right.
\ee
Then the matrix $M^{(2)}(x,t;k)$ satisfies the following RH problem
\be\label{3.39}
M_+^{(2)}(x,t;k)=M_-^{(2)}(x,t;k)J^{(2)}(x,t;k),\quad k\in\Sigma,
\ee
with the jump matrix $J^{(2)}=G_-^{-1}(k)J^{(1)}G_+(k)$ is given by
\bea
J^{(2)}_1&=&\begin{pmatrix}
1 & 0\\[4pt]
r_{1,a}\delta^{-2}\e^{t\Phi} & 1
\end{pmatrix},~\qquad J^{(2)}_2=\begin{pmatrix}
1 &~ -r_{2,a}\delta^2\e^{-t\Phi}\\
0~& 1
\end{pmatrix},\nn\\
J^{(2)}_3&=&\begin{pmatrix}
1 &~ 0\\
-r_{3,a}\delta^{-2}\e^{t\Phi} ~& 1
\end{pmatrix},\quad
J^{(2)}_4=\begin{pmatrix}
1 &~ r_{4,a}\delta^2\e^{-t\Phi}\\
0~& 1
\end{pmatrix},\label{3.40}\\
J^{(2)}_5&=&\begin{pmatrix}
1 ~& r_{4,r}\delta^{2}\e^{-t\Phi} \\[4pt]
0 ~& 1 \\
\end{pmatrix}\begin{pmatrix}
1 ~& 0 \\[4pt]
r_{1,r}\delta^{-2}\e^{t\Phi} ~& 1 \\
\end{pmatrix}, \quad J^{(2)}_{6}=\begin{pmatrix}
1 ~& 0 \\[4pt]
r_{3,r}\delta_-^{-2}\e^{t\Phi} ~& 1
\end{pmatrix}\begin{pmatrix}
 1 ~& r_{2,r}\delta_+^{2}\e^{-t\Phi} \\[4pt]
0 ~& 1 \\
\end{pmatrix},\nn
\eea
with $J^{(2)}_i$ denoting the restriction of $J^{(2)}$ to the contour $\Sigma$ labeled by $i$ in Fig. \ref{fig5}. It is easy to see that the jump matrix $J^{(2)}$ decays to identity matrix $I$ as $t\rightarrow\infty$ everywhere except near the critical points $\pm k_1$ and $\pm k_2$. This implies that we only need to consider a neighborhood of the critical points $\pm k_1$ and $\pm k_2$ when we study the long-time asymptotics of $M^{(2)}(x,t;k)$ in terms of the corresponding RH problem.

\subsection{Local models near the critical points $\pm k_1$ and $\pm k_2$}
 We introduce the following scaling operators
\bea
\begin{aligned}
&S_{-k_2}:~k\mapsto\frac{z}{4\sqrt{t(40\beta k_2^3-3\alpha k_2)}}-k_2,\\
&S_{-k_1}:~k\mapsto\frac{z}{4\sqrt{t(3\alpha k_1-40\beta k_1^3)}}-k_1,\\
&S_{k_1}:~k\mapsto\frac{z}{4\sqrt{t(3\alpha k_1-40\beta k_1^3)}}+k_1,\\
&S_{k_2}:~k\mapsto\frac{z}{4\sqrt{t(40\beta k_2^3-3\alpha k_2)}}+k_2.
\end{aligned}
\eea
Integrating by parts in formula \eqref{3.9} yields,
\be\label{3.42}
\begin{aligned}
\delta(k)&=\bigg(\frac{k-k_2}{k-k_1}\bigg)^{-\ii\nu(k_1)}\bigg(\frac{k+k_1}{k+k_2}\bigg)^{-\ii\nu(k_1)}\e^{\chi_1(k)}\\
&=\bigg(\frac{k-k_2}{k-k_1}\bigg)^{-\ii\nu(k_2)}\bigg(\frac{k+k_1}{k+k_2}\bigg)^{-\ii\nu(k_2)}\e^{\chi_2(k)},
\end{aligned}
\ee
where
\bea
\nu(k_1)&=&\frac{1}{2\pi}\ln(1+|r(k_1)|^2)>0,\label{3.43}\\
\chi_1(k)&=&\frac{1}{2\pi\ii}\int_{k_1}^{k_2}\ln\bigg(\frac{1+|r(s)|^2}{1+|r(k_1)|^2}\bigg)
\bigg(\frac{1}{s-k}-\frac{1}{s+k}\bigg)\dd s,\label{3.44}\\
\nu(k_2)&=&\frac{1}{2\pi}\ln(1+|r(k_2)|^2)>0,\label{3.45}\\
\chi_2(k)&=&\frac{1}{2\pi\ii}\int_{k_1}^{k_2}\ln\bigg(\frac{1+|r(s)|^2}{1+|r(k_2)|^2}\bigg)
\bigg(\frac{1}{s-k}-\frac{1}{s+k}\bigg)\dd s.\label{3.46}
\eea
Hence, we have
\bea
S_{-k_2}(\delta(k)\e^{-\frac{t\Phi(k)}{2}})&=&\delta_{-k_2}^0(z)\delta_{-k_2}^1(z),\nn\\
S_{-k_1}(\delta(k)\e^{-\frac{t\Phi(k)}{2}})&=&\delta_{-k_1}^0(z)\delta_{-k_1}^1(z),\nn\\
S_{k_1}(\delta(k)\e^{-\frac{t\Phi(k)}{2}})&=&\delta_{k_1}^0(z)\delta_{k_1}^1(z),\nn\\
S_{k_2}(\delta(k)\e^{-\frac{t\Phi(k)}{2}})&=&\delta_{k_2}^0(z)\delta_{k_2}^1(z),\nn
\eea
with
\bea
\delta_{-k_2}^0(z)&=&\bigg(16t(k_2-k_1)^2(40\beta k_2^3-3\alpha k_2)\bigg)^{-\frac{\ii\nu(k_2)}{2}}\bigg(\frac{2k_2}{k_1+k_2}\bigg)^{-\ii\nu(k_2)}\nn\\
&&\times\e^{\chi_2(-k_2)}\e^{-8\ii k_2^3t(8\beta k_2^2-\alpha)},\label{3.47}\\
\delta_{-k_2}^1(z)&=&(-z)^{\ii\nu(k_2)}\e^{(\chi_2(z/\sqrt{16tk_2(40\beta k_2^2-3\alpha)}-k_2)-\chi_2(-k_2))}\nn\\
&&\times\bigg(\frac{-z/\sqrt{16tk_2(40\beta k_2^2-3\alpha)}-k_1+k_2}{k_2-k_1}\bigg)^{-\ii\nu(k_2)}\nn\\
&&\times\bigg(\frac{k_1+k_2}{-z/\sqrt{16tk_2(40\beta k_2^2-3\alpha)}+k_1+k_2}\bigg)^{-\ii\nu(k_2)}\label{3.48}\\
&&\times\bigg(\frac{-z/\sqrt{16tk_2(40\beta k_2^2-3\alpha)}+2k_2}{2k_2}\bigg)^{-\ii\nu(k_2)}\nn\\
&&\times\exp\bigg(\frac{\ii z^2}{4}\bigg[1-\frac{(40\beta k_2^2-\alpha)z}{4\sqrt{t}(40\beta k_2^3-3\alpha k_2)^{3/2}}+\frac{5\beta z^2}{4tk_2(40\beta k_2^2-3\alpha)^2}\nn\\
&&\qquad\qquad-\frac{\beta z^3}{16t^{3/2}(40\beta k_2^3-3\alpha k_2)^{5/2}}\bigg]\bigg),\nn
\eea
\bea
\delta_{-k_1}^0(z)&=&\bigg(16t(k_2-k_1)^2(3\alpha k_1-40\beta k_1^3)\bigg)^{\frac{\ii\nu(k_1)}{2}}\bigg(\frac{k_1+k_2}{2k_1}\bigg)^{-\ii\nu(k_1)}\nn\\
&&\times\e^{\chi_1(-k_1)}\e^{-8\ii k_1^3t(8\beta k_1^2-\alpha)},\label{3.49}\\
\delta_{-k_1}^1(z)&=&z^{-\ii\nu(k_1)}\e^{(\chi_1(z/\sqrt{16tk_1(3\alpha-40\beta k_1^2)}-k_1)-\chi_1(-k_1))}\nn\\
&&\times\bigg(\frac{k_2-k_1}{z/\sqrt{16tk_1(3\alpha-40\beta k_1^2)}-k_1+k_2}\bigg)^{-\ii\nu(k_1)}\nn\\
&&\times\bigg(\frac{-z/\sqrt{16tk_1(3\alpha-40\beta k_1^2)}+k_1+k_2}{k_1+k_2}\bigg)^{-\ii\nu(k_1)}\label{3.50}\\
&&\times\bigg(\frac{2k_1}{-z/\sqrt{16tk_1(3\alpha-40\beta k_1^2)}+2k_1}\bigg)^{-\ii\nu(k_1)}\nn\\
&&\times\exp\bigg(-\frac{\ii z^2}{4}\bigg[1+\frac{(40\beta k_1^2-\alpha)z}{4\sqrt{t}(3\alpha k_1-40\beta k_1^3)^{3/2}}-\frac{5\beta z^2}{4tk_1(3\alpha-40\beta k_1^2)^2}\nn\\
&&\qquad\qquad+\frac{\beta z^3}{16t^{3/2}(3\alpha k_1-40\beta k_1^3)^{5/2}}\bigg]\bigg),\nn
\eea
\bea
\delta_{k_1}^0(z)&=&\bigg(16t(k_2-k_1)^2(3\alpha k_1-40\beta k_1^3)\bigg)^{-\frac{\ii\nu(k_1)}{2}}\bigg(\frac{2k_1}{k_1+k_2}\bigg)^{-\ii\nu(k_1)}\nn\\
&&\times\e^{\chi_1(k_1)}\e^{8\ii k_1^3t(8\beta k_1^2-\alpha)},\label{3.51}\\
\delta_{k_1}^1(z)&=&(-z)^{\ii\nu(k_1)}\e^{(\chi_1(z/\sqrt{16tk_1(3\alpha-40\beta k_1^2)}+k_1)-\chi_1(k_1))}\nn\\
&&\times\bigg(\frac{-z/\sqrt{16tk_1(3\alpha-40\beta k_1^2)}-k_1+k_2}{k_2-k_1}\bigg)^{-\ii\nu(k_1)}\nn\\
&&\times\bigg(\frac{k_1+k_2}{z/\sqrt{16tk_1(3\alpha-40\beta k_1^2)}+k_1+k_2}\bigg)^{-\ii\nu(k_1)}\label{3.52}\\
&&\times\bigg(\frac{z/\sqrt{16tk_1(3\alpha-40\beta k_1^2)}+2k_1}{2k_1}\bigg)^{-\ii\nu(k_1)}\nn\\
&&\times\exp\bigg(\frac{\ii z^2}{4}\bigg[1-\frac{(40\beta k_1^2-\alpha)z}{4\sqrt{t}(3\alpha k_1-40\beta k_1^3)^{3/2}}-\frac{5\beta z^2}{4tk_1(3\alpha-40\beta k_1^2)^2}\nn\\
&&\qquad\qquad-\frac{\beta z^3}{16t^{3/2}(3\alpha k_1-40\beta k_1^3)^{5/2}}\bigg]\bigg),\nn\\
\delta_{k_2}^0(z)&=&\bigg(16t(k_2-k_1)^2(40\beta k_2^3-3\alpha k_2)\bigg)^{\frac{\ii\nu(k_2)}{2}}\bigg(\frac{k_1+k_2}{2k_2}\bigg)^{-\ii\nu(k_2)}\nn\\
&&\times\e^{\chi_2(k_2)}\e^{8\ii k_2^3t(8\beta k_2^2-\alpha)},\label{3.53}\\
\delta_{k_2}^1(z)&=&z^{-\ii\nu(k_2)}\e^{(\chi_2(z/\sqrt{16tk_2(40\beta k_2^2-3\alpha)}+k_2)-\chi_2(k_2))}\nn\\
&&\times\bigg(\frac{k_2-k_1}{z/\sqrt{16tk_2(40\beta k_2^2-3\alpha)}-k_1+k_2}\bigg)^{-\ii\nu(k_2)}\nn\\
&&\times\bigg(\frac{2k_2}{z/\sqrt{16tk_2(40\beta k_2^2-3\alpha)}+2k_2}\bigg)^{-\ii\nu(k_2)}\label{3.54}\\
&&\times\bigg(\frac{z/\sqrt{16tk_2(40\beta k_2^2-3\alpha)}+k_1+k_2}{k_1+k_2}\bigg)^{-\ii\nu(k_2)}\nn\\
&&\times\exp\bigg(-\frac{\ii z^2}{4}\bigg[1+\frac{(40\beta k_2^2-\alpha)z}{4\sqrt{t}(40\beta k_2^3-3\alpha k_2)^{3/2}}+\frac{5\beta z^2}{4tk_2(40\beta k_2^2-3\alpha)^2}\nn\\
&&\qquad\qquad+\frac{\beta z^3}{16t^{3/2}(40\beta k_2^3-3\alpha k_2)^{5/2}}\bigg]\bigg).\nn
\eea
For $j=1,2$, let $D_\varepsilon(\pm k_j)$ denote the open disk of radius $\varepsilon$ centered at $\pm k_j$ for a small $\varepsilon>0$.  Now we define
\bea
\tilde{\tilde{M}}(x,t;z)&=&M^{(2)}(x,t;k)(\delta_{-k_2}^0)^{\sigma_3}(z),\quad k\in D_\varepsilon(-k_2)\setminus\Sigma,\nn\\
\check{\check{M}}(x,t;z)&=&M^{(2)}(x,t;k)(\delta_{-k_1}^0)^{\sigma_3}(z),\quad k\in D_\varepsilon(-k_1)\setminus\Sigma,\nn\\
\check{M}(x,t;z)&=&M^{(2)}(x,t;k)(\delta_{k_1}^0)^{\sigma_3}(z),\quad~~ k\in D_\varepsilon(k_1)\setminus\Sigma,\nn\\
\tilde{M}(x,t;z)&=&M^{(2)}(x,t;k)(\delta_{k_2}^0)^{\sigma_3}(z),\quad~~ k\in D_\varepsilon(k_2)\setminus\Sigma.\nn
\eea
Then $\check{\check{M}}$, $\check{M}$, $\tilde{\tilde{M}}$ and $\tilde{M}$ are the sectionally analytic functions of $z$ which satisfy
\bea
\tilde{\tilde{M}}_+(x,t;z)&=&\tilde{\tilde{M}}_-(x,t;z)\tilde{\tilde{J}}(x,t;z),\quad k\in \mathcal{X}_{-k_2}^\varepsilon,\nn\\
\check{\check{M}}_+(x,t;z)&=&\check{\check{M}}_-(x,t;z)\check{\check{J}}(x,t;z),\quad k\in \mathcal{X}_{-k_1}^\varepsilon,\nn\\
\check{M}_+(x,t;z)&=&\check{M}_-(x,t;z)\check{J}(x,t;z),\quad k\in \mathcal{X}_{k_1}^\varepsilon,\nn\\
\tilde{M}_+(x,t;z)&=&\tilde{M}_-(x,t;z)\tilde{J}(x,t;z),\quad k\in \mathcal{X}_{k_2}^\varepsilon,\nn
\eea
where $\mathcal{X}_{\pm k_j}=X\pm k_j$ denote the cross $X$ defined by \eqref{2.19} centered at $\pm k_j$ and $\mathcal{X}_{\pm k_j}^\varepsilon=\mathcal{X}_{\pm k_j}\cap D_\varepsilon(\pm k_j)$ for $j=1,2$. Moreover, the corresponding jump matrices are given by
\be\label{3.55}
\tilde{\tilde{J}}(x,t;z)=\left\{
\begin{aligned}
&\begin{pmatrix}
1 ~& r_{2,a}(\delta_{-k_2}^1)^{2}\\[4pt]
0 ~& 1
\end{pmatrix},~\qquad k\in (\mathcal{X}_{-k_2}^\varepsilon)_1,\\
&\begin{pmatrix}
1 ~& 0\\[4pt]
-r_{1,a}(\delta_{-k_2}^1)^{-2} ~& 1
\end{pmatrix},\quad k\in (\mathcal{X}_{-k_2}^\varepsilon)_2,\\
&\begin{pmatrix}
1 ~& -r_{4,a}(\delta_{-k_2}^1)^{2}\\[4pt]
0 ~& 1
\end{pmatrix},~~\quad k\in (\mathcal{X}_{-k_2}^\varepsilon)_3,\\
&\begin{pmatrix}
1 ~& 0\\[4pt]
r_{3,a}(\delta_{-k_2}^1)^{-2} ~& 1
\end{pmatrix},\qquad k\in (\mathcal{X}_{-k_2}^\varepsilon)_4,\\
\end{aligned}
\right.
\ee
\be\label{3.56}
\check{\check{J}}(x,t;z)=\left\{
\begin{aligned}
&\begin{pmatrix}
1 ~& 0\\[4pt]
r_{1,a}(\delta_{-k_1}^1)^{-2} ~& 1
\end{pmatrix},~\quad k\in (\mathcal{X}_{-k_1}^\varepsilon)_1,\\
&\begin{pmatrix}
1 ~& -r_{2,a}(\delta_{-k_1}^1)^{2}\\[4pt]
0 ~& 1
\end{pmatrix},~~~~ k\in (\mathcal{X}_{-k_1}^\varepsilon)_2,\\
&\begin{pmatrix}
1 ~& 0\\[4pt]
-r_{3,a}(\delta_{-k_1}^1)^{-2} ~& 1
\end{pmatrix},~~ k\in (\mathcal{X}_{-k_1}^\varepsilon)_3,\\
&\begin{pmatrix}
1 ~& r_{4,a}(\delta_{-k_1}^1)^{2}\\[4pt]
0 ~& 1
\end{pmatrix},\quad\quad k\in (\mathcal{X}_{-k_1}^\varepsilon)_4,\\
\end{aligned}
\right.
\ee
\be\label{3.57}
\check{J}(x,t;z)=\left\{
\begin{aligned}
&\begin{pmatrix}
1 ~& r_{2,a}(\delta_{k_1}^1)^{2}\\[4pt]
0 ~& 1
\end{pmatrix},~\qquad k\in (\mathcal{X}_{k_1}^\varepsilon)_1,\\
&\begin{pmatrix}
1 ~& 0\\[4pt]
-r_{1,a}(\delta_{k_1}^1)^{-2} ~& 1
\end{pmatrix},\quad k\in (\mathcal{X}_{k_1}^\varepsilon)_2,\\
&\begin{pmatrix}
1 ~& -r_{4,a}(\delta_{k_1}^1)^{2}\\[4pt]
0 ~& 1
\end{pmatrix},~~\quad k\in (\mathcal{X}_{k_1}^\varepsilon)_3,\\
&\begin{pmatrix}
1 ~& 0\\[4pt]
r_{3,a}(\delta_{k_1}^1)^{-2} ~& 1
\end{pmatrix},\qquad k\in (\mathcal{X}_{k_1}^\varepsilon)_4,\\
\end{aligned}
\right.
\ee
and
\be\label{3.58}
\tilde{J}(x,t;z)=\left\{
\begin{aligned}
&\begin{pmatrix}
1 ~& 0\\[4pt]
r_{1,a}(\delta_{k_2}^1)^{-2} ~& 1
\end{pmatrix},~\quad k\in (\mathcal{X}_{k_2}^\varepsilon)_1,\\
&\begin{pmatrix}
1 ~& -r_{2,a}(\delta_{k_2}^1)^{2}\\[4pt]
0 ~& 1
\end{pmatrix},~~~~ k\in (\mathcal{X}_{k_2}^\varepsilon)_2,\\
&\begin{pmatrix}
1 ~& 0\\[4pt]
-r_{3,a}(\delta_{k_2}^1)^{-2} ~& 1
\end{pmatrix},~~ k\in (\mathcal{X}_{k_2}^\varepsilon)_3,\\
&\begin{pmatrix}
1 ~& r_{4,a}(\delta_{k_2}^1)^{2}\\[4pt]
0 ~& 1
\end{pmatrix},\quad\quad k\in (\mathcal{X}_{k_2}^\varepsilon)_4.\\
\end{aligned}
\right.
\ee
For the jump matrix $\tilde{J}(x,t;z)$, define $$q=r(k_2),$$ then for any fixed $z\in X$, we have $k(z)\rightarrow k_2$ as $t\rightarrow\infty$. Hence,
$$r_{1,a}(k)\rightarrow q,\quad r_{2,a}(k)\rightarrow\frac{\bar{q}}{1+|q|^2},\quad \delta_{k_2}^1\rightarrow\e^{-\frac{\ii z^2}{4}}z^{-\ii\nu(q)}.$$
This implies that the jump matrix $\tilde{J}$ tend to the matrix $J^X$ defined in \eqref{2.21} for large $t$. In other words, the jumps of $M^{(2)}$ for $k$ near $k_2$ approach those of the function $M^X(\delta_{k_2}^0)^{-\sigma_3}$ as $t\rightarrow\infty$. Therefore, we can approximate $M^{(2)}$ in the neighborhood $D_\varepsilon(k_2)$ of $k_2$ by
\be\label{3.59}
M^{(k_2)}(x,t;k)=(\delta_{k_2}^0)^{\sigma_3}M^X(q,z)(\delta_{k_2}^0)^{-\sigma_3},
\ee
where $M^X(q,z)$ is given by \eqref{2.22}. On the other hand, according to the symmetry property \eqref{3.3}, one can deduce that \berr
\tilde{\tilde{J}}(x,t;z)\rightarrow \overline{J^X(q,-\bar{z})},\quad \text{as}~t\rightarrow\infty.
\eerr
Thus, by uniqueness of the RH problem, we can approximate $M^{(2)}$ in the neighborhood $D_\varepsilon(-k_2)$ of $-k_2$ by
\be\label{3.60}
M^{(-k_2)}(x,t;k)=(\delta_{-k_2}^0)^{\sigma_3}\overline{M^X(q,-\bar{z})}(\delta_{-k_2}^0)^{-\sigma_3}.
\ee

For the case of $\check{J}(x,t;z)$, as $t\rightarrow\infty$, we find
\berr
r_{1,a}(k)\rightarrow r(k_1),\quad r_{2,a}(k)\rightarrow\frac{\overline{r(k_1)}}{1+|r(k_1)|^2},\quad \delta_{k_1}^1\rightarrow\e^{\frac{\ii z^2}{4}}(-z)^{\ii\nu(k_1)}.
\eerr
This implies as $t\rightarrow\infty$ that
\berr
\check{J}(x,t;z)\rightarrow J^{Y}(p,z)=\left\{
\begin{aligned}
&\begin{pmatrix}
1 ~& \frac{\bar{p}}{1+|p|^2}\e^{\frac{\ii z^2}{2}}(-z)^{2\ii\nu(p)}\\[4pt]
0 ~& 1
\end{pmatrix},~~~\quad z\in X_1,\\
&\begin{pmatrix}
1 ~& 0\\[4pt]
-p\e^{-\frac{\ii z^2}{2}}(-z)^{-2\ii\nu(p)} ~& 1
\end{pmatrix},~\quad\quad z\in X_2,\\
&\begin{pmatrix}
1 ~& -\bar{p}\e^{\frac{\ii z^2}{2}}(-z)^{2\ii\nu(p)}\\[4pt]
0 ~& 1
\end{pmatrix},~~~~\qquad z\in X_3,\\
&\begin{pmatrix}
1 ~& 0\\[4pt]
\frac{p}{1+|p|^2}\e^{-\frac{\ii z^2}{2}}(-z)^{-2\ii\nu(p)} ~& 1
\end{pmatrix},~~~z\in X_4,\\
\end{aligned}
\right.
\eerr
if we set $$p=r(k_1).$$ It is easy to verify that
\berr
J^{Y}(p,z)=\overline{J^X(\bar{p},-\bar{z})},
\eerr
which in turn implies that
\be\label{3.61}
M^{Y}(p,z)=\overline{M^X(\bar{p},-\bar{z})},
\ee
where $M^{Y}(p,z)$ is the unique solution of the following RH problem
\berr\left\{
\begin{aligned}
&M^{Y}_+(p,z)=M^{Y}_-(p,z)J^{Y}(p,z),~\text{for~almost~every}~z\in X,\\
&M^{Y}(p,z)\rightarrow I,~~~\qquad\qquad\qquad\text{as}~z\rightarrow\infty.
\end{aligned}
\right.
\eerr
Therefore, we find that
\berr
M^{Y}(p,z)=I-\frac{\ii}{z}\begin{pmatrix}
0 ~& \beta^{Y}(p)\\[4pt]
\overline{\beta^{Y}(p)} &~ 0
\end{pmatrix}+O\bigg(\frac{p}{z^2}\bigg),
\eerr
where
\berr
\beta^{Y}(p)=\sqrt{\nu(p)}\e^{-\ii(\frac{\pi}{4}+\arg p+\arg\Gamma(-\ii\nu(p)))}.
\eerr
As a consequence, we can approximate $M^{(2)}(x,t;k)$ in the neighborhood $D_\varepsilon(k_1)$ of $k_1$ by
\be\label{3.62}
M^{(k_1)}(x,t;k)=(\delta_{k_1}^0)^{\sigma_3}M^{Y}(p,z)(\delta_{k_1}^0)^{-\sigma_3}.
\ee
Using again \eqref{3.3}, we can use
\be\label{3.63}
M^{(-k_1)}(x,t;k)=(\delta_{-k_1}^0)^{\sigma_3}\overline{M^{Y}(p,-\bar{z})}(\delta_{-k_1}^0)^{-\sigma_3}
\ee
to approximate $M^{(2)}(x,t;k)$ in the neighborhood $D_\varepsilon(-k_1)$ of $-k_1$.

\begin{lemma}\label{lem3}
For each $t>0$, $\xi\in\mathcal{I}$ and $j=1,2$, the functions $M^{(\pm k_j)}(x,t;k)$ defined in \eqref{3.62}, \eqref{3.63}, \eqref{3.59} and \eqref{3.60} are analytic functions of $k\in D_\varepsilon(\pm k_j)\setminus\mathcal{X}_{\pm k_j}^\varepsilon$. Furthermore,
\be\label{3.64}
|M^{(\pm k_j)}(x,t;k)-I|\leq C,\quad t>3,~\xi\in\mathcal{I},~k\in\overline{ D_\varepsilon(\pm k_j)}\setminus\mathcal{X}_{\pm k_j}^\varepsilon,~j=1,2.
\ee
Across $\mathcal{X}_{\pm k_j}^\varepsilon$, $M^{(\pm k_j)}(x,t;k)$ satisfied the jump condition $M_+^{(\pm k_j)}=M_-^{(\pm k_j)}J^{(\pm k_j)}$, where the jump matrix $J^{(\pm k_j)}$ satisfy the following estimates for $1\leq n\leq\infty$:
\be\label{3.65}
\|J^{(2)}-J^{(\pm k_j)}\|_{L^n(\mathcal{X}_{\pm k_j}^\varepsilon)}\leq Ct^{-\frac{1}{2}-\frac{1}{2n}}\ln t,
\quad t>3,~\xi\in\mathcal{I},~j=1,2,
\ee
where $C>0$ is a constant independent of $t,\xi,k$. Moreover, as $t\rightarrow\infty$,
\be\label{3.66}
\|(M^{(\pm k_j)})^{-1}(x,t;k)-I\|_{L^\infty(\partial D_\varepsilon(\pm k_j))}=O(t^{-1/2}),
\ee
and
\bea
\frac{1}{2\pi\ii}\int_{\partial D_\varepsilon(k_1)}((M^{(k_1)})^{-1}(x,t;k)-I)\dd k&=&-\frac{(\delta_{k_1}^0)^{\hat{\sigma}_3}M^Y_1(\xi)}
{4\sqrt{tk_1(3\alpha-40\beta k_1^2)}}+O(t^{-1}),\label{3.67}\\
\frac{1}{2\pi\ii}\int_{\partial D_\varepsilon(-k_1)}((M^{(-k_1)})^{-1}(x,t;k)-I)\dd k&=&\frac{(\delta_{-k_1}^0)^{\hat{\sigma}_3}\overline{M^Y_1(\xi)}}
{4\sqrt{tk_1(3\alpha-40\beta k_1^2)}}+O(t^{-1}),\label{3.68}\\
\frac{1}{2\pi\ii}\int_{\partial D_\varepsilon(k_2)}((M^{(k_2)})^{-1}(x,t;k)-I)\dd k&=&-\frac{(\delta_{k_2}^0)^{\hat{\sigma}_3}M^X_1(\xi)}
{4\sqrt{tk_2(40\beta k_2^2-3\alpha)}}+O(t^{-1}),\label{3.69}\\
\frac{1}{2\pi\ii}\int_{\partial D_\varepsilon(-k_2)}((M^{(-k_2)})^{-1}(x,t;k)-I)\dd k&=&\frac{(\delta_{-k_2}^0)^{\hat{\sigma}_3}\overline{M^X_1(\xi)}}
{4\sqrt{tk_2(40\beta k_2^2-3\alpha)}}+O(t^{-1}),\label{3.70}
\eea
where $M^X_1(\xi)$ and $M^Y_1(\xi)$ are given by
\be\label{3.71}
M^X_1(\xi)=-\ii\begin{pmatrix}
0 ~& \beta^X(q)\\[4pt]
\overline{\beta^X(q)} ~& 0
\end{pmatrix},\quad
M^Y_1(\xi)=-\ii\begin{pmatrix}
0 ~& \beta^Y(p)\\[4pt]
\overline{\beta^Y(p)} ~& 0
\end{pmatrix}.
\ee
\end{lemma}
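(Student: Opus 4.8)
The plan is to deduce every assertion of Lemma~\ref{lem3} from the model results of Theorem~\ref{th2}, the explicit formulas \eqref{3.47}--\eqref{3.54} for $\delta^0_{\pm k_j}$ and $\delta^1_{\pm k_j}$, and the decomposition properties collected in Lemma~\ref{lem2}. Analyticity of $M^{(\pm k_j)}(x,t;\cdot)$ in $D_\varepsilon(\pm k_j)\setminus\mathcal{X}_{\pm k_j}^\varepsilon$ is immediate, because $(\delta^0_{\pm k_j})^{\sigma_3}$ does not depend on $k$, each scaling operator $S_{\pm k_j}$ is an affine bijection $k\mapsto z$, and $M^X(q,\cdot)$ and $M^Y(p,\cdot)$ are analytic off the cross $X$ by Theorem~\ref{th2}. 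For the uniform bound \eqref{3.64} the decisive observation is that $|\delta^0_{\pm k_j}|=1$: in \eqref{3.47}, \eqref{3.49}, \eqref{3.51} and \eqref{3.53} the bases of all power-type factors are positive reals (since $40\beta k_2^3-3\alpha k_2=3\alpha s k_2>0$ and $3\alpha k_1-40\beta k_1^3=3\alpha s k_1>0$ on $\mathcal{I}$, where $s=\sqrt{1+20\beta\xi/(9\alpha^2)}\in(0,1)$), the corresponding exponents are purely imaginary, and both $\chi_j(\pm k_j)$ and $8\ii k_j^3 t(8\beta k_j^2-\alpha)$ are purely imaginary, the former because the integrand in \eqref{3.44} and \eqref{3.46} is real-valued. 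Hence conjugation by $(\delta^0_{\pm k_j})^{\sigma_3}$ only multiplies off-diagonal matrix entries by unimodular scalars, and since $q=r(k_2)$ and $p=r(k_1)$ stay in a fixed compact subset $\mathcal{D}\subset\bfC$ as $\xi$ ranges over the compact closure of $\mathcal{I}$ (the points $k_1(\xi),k_2(\xi)$ remaining in a compact subset of $(0,\infty)$), the bound \eqref{3.64} follows from \eqref{2.24}.

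The heart of the matter is the comparison estimate \eqref{3.65}. By construction the jump of the local model $M^{(k_2)}$ across $\mathcal{X}_{k_2}^\varepsilon$ equals $(\delta^0_{k_2})^{\sigma_3}J^X(q,z)(\delta^0_{k_2})^{-\sigma_3}$, so, since $\tilde{J}=(\delta^0_{k_2})^{-\sigma_3}J^{(2)}(\delta^0_{k_2})^{\sigma_3}$ on $\mathcal{X}_{k_2}^\varepsilon$,
\[
J^{(2)}-J^{(k_2)}=(\delta^0_{k_2})^{\sigma_3}\bigl(\tilde{J}-J^X\bigr)(\delta^0_{k_2})^{-\sigma_3},
\]
with the analogous identities at $-k_2$ and $\pm k_1$ involving $J^X$ or $J^Y$ and, at $-k_1$ and $-k_2$, the Schwartz reflection $z\mapsto-\bar z$. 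Since $|\delta^0_{\pm k_j}|=1$, it suffices to estimate $\tilde{J}-J^X$ and its analogues ray by ray. On a given ray the relevant off-diagonal entry of $\tilde{J}-J^X$ has the form $r_{j,a}(k(z))(\delta^1_{k_j}(z))^{\pm2}-q\,z^{\mp\ii\nu(q)}\e^{\pm\ii z^2/2}$ (or its $\bar{q}/(1+|q|^2)$ variant), and I would split it as
\[
\bigl(r_{j,a}(k(z))-q\bigr)(\delta^1_{k_j}(z))^{\pm2}+q\Bigl((\delta^1_{k_j}(z))^{\pm2}-z^{\mp\ii\nu(q)}\e^{\pm\ii z^2/2}\Bigr).
\]
For the first summand one uses $\nu(q)=\nu(k_j)$, the Lipschitz estimate $|r_{j,a}(k)-r(k_j)|\le C|k-k_j|\le C|z|t^{-1/2}$ supplied by the smoothness of $r$ and parts (1)--(2) of Lemma~\ref{lem2}, together with $|(\delta^1_{k_j})^{\pm2}|\le C\e^{-c|z|^2}$, which follows from $\text{Re}\,\Phi(k)\le-c|k-k_j|^2$ on the cross (the sign table of Fig.~\ref{fig2}) and the boundedness of $\delta^{\pm1}$ on $D_\varepsilon(\pm k_j)$. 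For the second summand I would Taylor-expand the explicit factors of \eqref{3.48}, \eqref{3.50}, \eqref{3.52} and \eqref{3.54}: the three ratio factors equal $\bigl(1+O(zt^{-1/2})\bigr)^{\mp\ii\nu(k_j)}$; the cubic and higher-order terms in the Gaussian exponent are $O(|z|^3t^{-1/2})$ but have negative real part along the cross, so they may be exponentiated against $\e^{-c|z|^2}$ without loss; and $\e^{\chi_j(\cdot)-\chi_j(\pm k_j)}-1=O(|z|t^{-1/2}\ln t)$, because $\chi_j'(k)$ has a logarithmic singularity of order $\ln|k-k_j|$ at the stationary points — this last contribution is precisely what produces the $\ln t$ loss in \eqref{3.65}. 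Combining these pointwise bounds with the Gaussian decay yields $|\tilde{J}-J^X|\le Ct^{-1/2}\,|z|\,\e^{-c|z|^2}\ln t$ on the cross, so $\|\tilde{J}-J^X\|_{L^\infty(\mathcal{X}_{k_2}^\varepsilon)}=O(t^{-1/2}\ln t)$; since $|dk|\asymp t^{-1/2}|dz|$ under $k\mapsto z$, one obtains $\|\tilde{J}-J^X\|_{L^1(\mathcal{X}_{k_2}^\varepsilon)}=O(t^{-1}\ln t)$, and \eqref{3.65} for $1\le n\le\infty$ follows by interpolation. (Only the analytic parts $r_{j,a}$ occur in $J^{(2)}$ on $\mathcal{X}_{\pm k_j}^\varepsilon$, which consists of the four cross arms alone.)

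Finally, on $\partial D_\varepsilon(\pm k_j)$ the variable $z$ has modulus bounded below by a fixed positive multiple of $\sqrt{t}$, hence $|z|\to\infty$ uniformly for $\xi\in\mathcal{I}$; so \eqref{2.22} gives $M^X(q,z)=I+O(t^{-1/2})$, and likewise for $M^Y$, and since conjugation by the unimodular $(\delta^0_{\pm k_j})^{\sigma_3}$ and matrix inversion preserve an $O(t^{-1/2})$ estimate, \eqref{3.66} follows. For \eqref{3.67}--\eqref{3.70} I would expand, after inversion, $(M^{(k_1)})^{-1}(x,t;k)-I=-(\delta^0_{k_1})^{\hat{\sigma}_3}M^Y_1(\xi)/z+O(z^{-2})$ with $z=4(k-k_1)\sqrt{t\,k_1(3\alpha-40\beta k_1^2)}$, and then evaluate the contour integral by residues using $\frac{1}{2\pi\ii}\int_{\partial D_\varepsilon(k_1)}\frac{\dd k}{z}=\frac{1}{4\sqrt{t\,k_1(3\alpha-40\beta k_1^2)}}$, the $O(z^{-2})$ remainder contributing $O(t^{-1})$ because $\partial D_\varepsilon(k_1)$ has fixed length and $|z|^{-2}=O(t^{-1})$ on it; the formulas \eqref{3.68}--\eqref{3.70} come out in exactly the same way, the Schwartz-reflection identities \eqref{3.60} and \eqref{3.61} accounting for the conjugates and the signs. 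The step I expect to be the main obstacle is the ray-by-ray estimate behind \eqref{3.65}: one must control the unwieldy $z$-dependent factors of \eqref{3.48}--\eqref{3.54} uniformly for $|z|$ as large as order $\sqrt{t}$ and correctly identify the logarithmic loss generated by the Cauchy-type factors $\e^{\chi_j(\cdot)-\chi_j(\pm k_j)}$. Once this is established, the remaining assertions are routine consequences of Theorem~\ref{th2}.
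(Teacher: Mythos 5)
Your proposal is correct and follows essentially the same route as the paper: reduce everything to Theorem \ref{th2} via the unimodularity of $\delta^0_{\pm k_j}$, write $J^{(2)}-J^{(\pm k_j)}=(\delta^0_{\pm k_j})^{\hat{\sigma}_3}(\tilde{J}-J^X)$ (and its analogues), establish the $L^\infty$ and $L^1$ bounds on each ray and interpolate, and then obtain \eqref{3.66}--\eqref{3.70} from the large-$z$ expansion \eqref{2.22} together with Cauchy's formula. The only difference is that the paper simply cites Deift--Zhou (Lemma 3.35 of \cite{PD}) for the key estimate \eqref{3.72}, whereas you carry out that ray-by-ray computation — including the correct identification of the $\ln t$ loss from the $\chi_j$ factors — which is a welcome amplification rather than a different method.
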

\begin{proof}
We just consider the proof for the function $M^{(k_2)}(x,t;k)$, the others accordingly follow.

The analyticity of $M^{(k_2)}$ is obvious. Since $|\delta_{k_2}^0(z)|=1$, thus, the estimate \eqref{3.64} for $M^{(k_2)}$ follows from the definition of $M^{(k_2)}$ in \eqref{3.59} and the estimate \eqref{3.25}. On the other hand, we have
\berr
J^{(2)}-J^{(k_2)}=(\delta_{k_2}^0)^{\hat{\sigma}_3}(\tilde{J}-J^X),\quad k\in\mathcal{X}_{k_2}^\varepsilon.
\eerr
However, proceeding the similar calculation as the Lemma 3.35 in \cite{PD} (also can see \cite{CVZ,XJ1}), we have
\be\label{3.72}
\|\tilde{J}-J^X\|_{L^\infty((\mathcal{X}_{k_2}^\varepsilon)_1)}\leq C|\e^{\frac{\ii\gamma}{2}z^2}|t^{-1/2}\ln t,\quad 0<\gamma<\frac{1}{2},~t>3,~\xi\in\mathcal{I},
\ee
for $k\in(\mathcal{X}_{k_2}^\varepsilon)_1$, that is, $z=4\sqrt{tk_2(40\beta k_2^2-3\alpha)}s\e^{\frac{\ii\pi}{4}}$, $0\leq s\leq\varepsilon$. Thus,
\be\label{3.73}
\|\tilde{J}-J^X\|_{L^1((\mathcal{X}_{k_2}^\varepsilon)_1)}\leq Ct^{-1}\ln t,\quad t>3,~\xi\in\mathcal{I}.
\ee
By the general inequality $\|f\|_{L^n}\leq\|f\|^{1-1/n}_{L^\infty}\|f\|_{L^1}^{1/n}$, we find
\be
\|\tilde{J}-J^X\|_{L^n((\mathcal{X}_{k_2}^\varepsilon)_1)}\leq Ct^{-1/2-1/2n}\ln t,\quad t>3,~\xi\in\mathcal{I}.
\ee
The norms on $(\mathcal{X}_{k_2}^\varepsilon)_j$, $j=2,3,4$, are estimated in a similar way. Therefore, \eqref{3.65} follows.

If $k\in\partial D_\varepsilon(k_2)$, the variable $z=4\sqrt{tk_2(40\beta k_2^2-3\alpha)}(k-k_2)$ tends to infinity as $t\rightarrow\infty$. It follows from \eqref{2.22} that
\berr
M^X(q,z)=I+\frac{M^X_1(\xi)}{4\sqrt{tk_2(40\beta k_2^2-3\alpha)}(k-k_2)}+O\bigg(\frac{q}{t}\bigg),\quad t\rightarrow\infty,~k\in \partial D_\varepsilon(k_2),
\eerr
where $M^X_1(\xi)$ is defined by \eqref{3.71}. Since
 $$M^{(k_2)}(x,t;k)=(\delta_{k_2}^0)^{\hat{\sigma}_3}M^X(q,z),$$
thus we have
\be\label{3.75}
(M^{(k_2)})^{-1}(x,t;k)-I=-\frac{(\delta_{k_2}^0)^{\hat{\sigma}_3}M^X_1(\xi)}
{4\sqrt{tk_2(40\beta k_2^2-3\alpha)}(k-k_2)}+O\bigg(\frac{q}{t}\bigg),~t\rightarrow\infty,~k\in \partial D_\varepsilon(k_2).
\ee
The estimate \eqref{3.66} immediately follows from \eqref{3.75} and $|M_1^X|\leq C$. By Cauchy's formula and \eqref{3.75}, we derive \eqref{3.69}.
\end{proof}

\subsection{The final step}

We now begin to establish the explicit long-time asymptotic formula for the emKdV equation \eqref{1.1} on the line.
Define the approximate solution $M^{(app)}(x,t;k)$ by
\be\label{3.76}
M^{(app)}=\left\{\begin{aligned}
&M^{(-k_2)},\quad k\in D_\varepsilon(-k_2),\\
&M^{(-k_1)},\quad k\in D_\varepsilon(-k_1),\\
&M^{(k_1)},~~\quad k\in D_\varepsilon(k_1),\\
&M^{(k_2)},~~\quad k\in D_\varepsilon(k_2),\\
&I,\qquad ~~~~~{\text elsewhere}.
\end{aligned}
\right.
\ee
Let $\hat{M}(x,t;k)$ be
\be\label{3.77}
\hat{M}=M^{(2)}(M^{(app)})^{-1},
\ee
 then $\hat{M}(x,t;k)$ satisfies the following RH problem
\be\label{3.78}
\hat{M}_+(x,t;k)=\hat{M}_-(x,t;k)\hat{J}(x,t;k),\quad k\in\hat{\Sigma},
\ee
where the jump contour $\hat{\Sigma}=\Sigma\cup\partial D_\varepsilon(-k_2)\cup\partial D_\varepsilon(-k_1)\cup\partial D_\varepsilon(k_1)\cup\partial D_\varepsilon(k_2)$ is depicted in Fig. \ref{fig6}, and the jump matrix $\hat{J}(x,t;k)$ is given by
\be\label{3.79}
\hat{J}=\left\{
\begin{aligned}
&M^{(app)}_-J^{(2)}(M^{(app)}_+)^{-1},~~ k\in\hat{\Sigma}\cap (D_\varepsilon(-k_2)\cup D_\varepsilon(-k_1)\cup D_\varepsilon(k_1)\cup D_\varepsilon(k_2)),\\
&(M^{(app)})^{-1},\qquad\qquad\quad k\in(\partial D_\varepsilon(-k_2)\cup\partial D_\varepsilon(-k_1)\cup\partial D_\varepsilon(k_1)\cup\partial D_\varepsilon(k_2)),\\
&J^{(2)},\qquad\qquad\qquad\quad~~~ k\in\hat{\Sigma}\setminus (\overline{D_\varepsilon(-k_2)}\cup\overline{D_\varepsilon(-k_1)}\cup\overline{D_\varepsilon(k_1)}\cup\overline{D_\varepsilon(k_2)}).
\end{aligned}
\right.
\ee
\begin{figure}[htbp]
\centering
\includegraphics[width=4in]{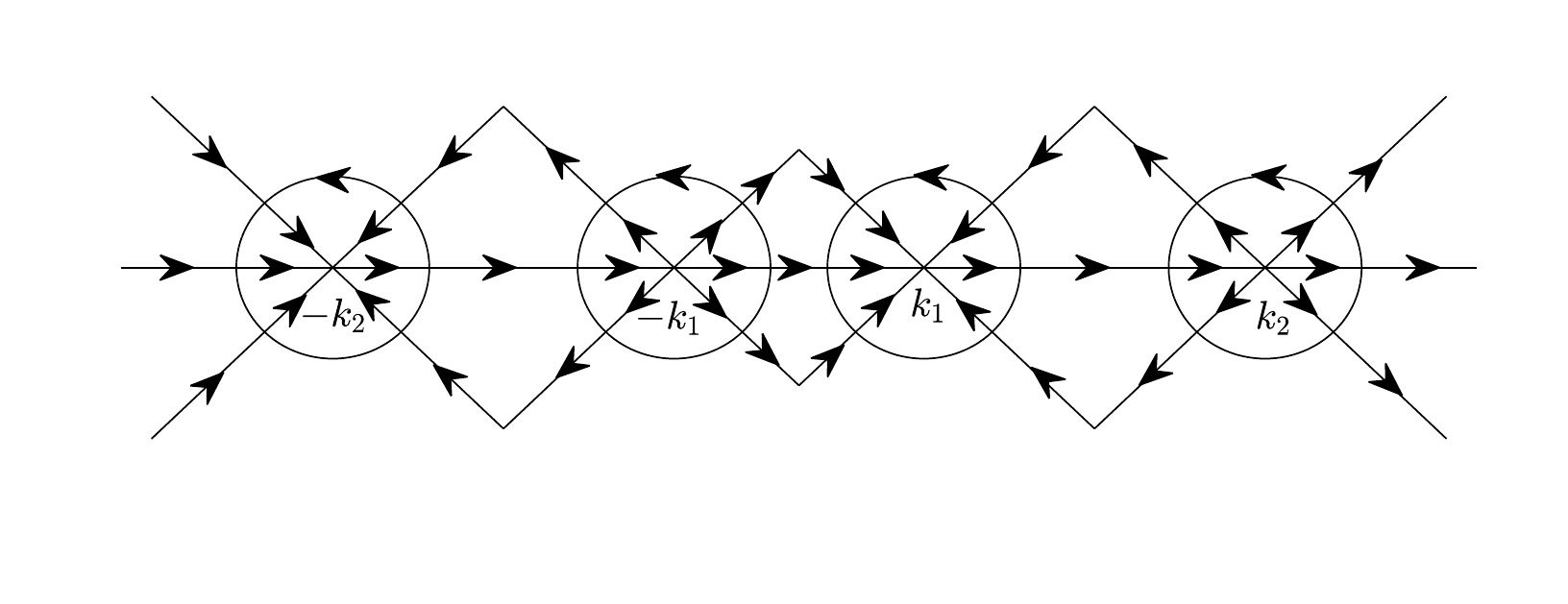}
\caption{The contour $\hat{\Sigma}$.}\label{fig6}
\end{figure}

For convenience, we rewrite $\hat{\Sigma}$ as follows:
$$\hat{\Sigma}=\hat{\Sigma}_1\cup\hat{\Sigma}_2\cup\hat{\Sigma}_3\cup\hat{\Sigma}_4,$$
where
\bea
\hat{\Sigma}_1&=&\bigcup_1^4\Sigma_j\setminus(D_\varepsilon(-k_2)\cup D_\varepsilon(-k_1)\cup D_\varepsilon(k_1)\cup D_\varepsilon(k_2)),~\hat{\Sigma}_2=\bigcup_5^{6}\Sigma_j,\nn\\
\hat{\Sigma}_3&=&\partial D_\varepsilon(-k_2)\cup\partial D_\varepsilon(-k_1)\cup\partial D_\varepsilon(k_1)\cup\partial D_\varepsilon(k_2),\nn\\
\hat{\Sigma}_4&=&\mathcal{X}_{-k_2}^\varepsilon\cup\mathcal{X}_{-k_1}^\varepsilon
\cup\mathcal{X}_{k_1}^\varepsilon\cup\mathcal{X}_{k_2}^\varepsilon.\nn
\eea
and $\{\Sigma_j\}_1^{6}$ denoting the restriction of $\Sigma$ to the contour labeled by $j$ in Fig. \ref{fig5}.
 Then we have the following lemma if we let $\hat{w}=\hat{J}-I$.
\begin{lemma}\label{lem4}
For $1\leq n\leq\infty$, $t>3$ and $\xi\in\mathcal{I}$, the following estimates hold:
\bea
\|\hat{w}\|_{L^n(\hat{\Sigma}_1)}&\leq&C\e^{-ct},\label{3.80}\\
\|\hat{w}\|_{L^n(\hat{\Sigma}_2)}&\leq&Ct^{-3/2},\label{3.81}\\
\|\hat{w}\|_{L^n(\hat{\Sigma}_3)}&\leq& Ct^{-1/2},\label{3.82}\\
\|\hat{w}\|_{L^n(\hat{\Sigma}_4)}&\leq&Ct^{-\frac{1}{2}-\frac{1}{2n}}\ln t.\label{3.83}
\eea
\end{lemma}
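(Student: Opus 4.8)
To prove Lemma~\ref{lem4}, the plan is to estimate $\hat w=\hat J-I$ on each of the four pieces $\hat\Sigma_1,\dots,\hat\Sigma_4$ in turn, reading off the form of $\hat J$ from \eqref{3.79} and feeding in the bounds already collected in Lemmas~\ref{lem1}--\ref{lem3} together with the sign chart for $\text{Re}\,\Phi$ in Fig.~\ref{fig2}.

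On $\hat\Sigma_1$ we have $\hat J=J^{(2)}$ restricted to the parts of $\Sigma_1,\dots,\Sigma_4$ outside the four disks, so every nonzero entry of $\hat w$ has the form $r_{j,a}\,\delta^{\mp2}\e^{\pm t\Phi}$ with the sign arranged so that $\e^{\pm t\Phi}$ is the decaying exponential on the lobe $\Omega_j$ in question. Using that $\delta^{\pm1}$ is bounded (Lemma~\ref{lem1}(iii)) and $|r_{j,a}|\le C(1+|k|^2)^{-1}\e^{\frac t4|\text{Re}\,\Phi(k)|}$ from \eqref{3.15}, this gives $|\hat w(k)|\le C(1+|k|^2)^{-1}\e^{-\frac{3t}{4}|\text{Re}\,\Phi(k)|}$; since $\mathcal I$ is a fixed compact interval the four stationary points stay in a compact set, bounded away from one another and from the origin uniformly in $\xi\in\mathcal I$, so $|\text{Re}\,\Phi(k)|\ge c(\varepsilon)>0$ on $\hat\Sigma_1$ and \eqref{3.80} follows after taking $L^n$ norms, the weight $(1+|k|^2)^{-1}$ supplying $L^1$-integrability along the unbounded arms. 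On $\hat\Sigma_2=\Sigma_5\cup\Sigma_6$ one has $\text{Re}\,\Phi=0$, so $|\e^{\pm t\Phi}|=1$ and the entries of $\hat w$ involve only the remainder functions $r_{j,r}$ times bounded factors; by Lemma~\ref{lem2}(3)--(4) the $L^1,L^2,L^\infty$ norms of $r_{j,r}$ are $O(t^{-3/2})$, and the interpolation inequality $\|f\|_{L^n}\le\|f\|_{L^\infty}^{1-1/n}\|f\|_{L^1}^{1/n}$ gives \eqref{3.81} for every $1\le n\le\infty$.

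On the circles $\hat\Sigma_3$ we have $\hat w=(M^{(app)})^{-1}-I=(M^{(\pm k_j)})^{-1}-I$, so \eqref{3.82} is immediate from \eqref{3.66} and the finite length of $\partial D_\varepsilon(\pm k_j)$. On $\hat\Sigma_4=\mathcal X^\varepsilon_{-k_2}\cup\mathcal X^\varepsilon_{-k_1}\cup\mathcal X^\varepsilon_{k_1}\cup\mathcal X^\varepsilon_{k_2}$ the boundary values of $M^{(app)}$ are those of the exact parametrices, $M^{(app)}_\pm=M^{(\pm k_j)}_\pm$ with $M^{(\pm k_j)}_+=M^{(\pm k_j)}_-J^{(\pm k_j)}$, so inserting this into \eqref{3.79} yields
\be
\hat w=M^{(\pm k_j)}_-\big(J^{(2)}-J^{(\pm k_j)}\big)\big(J^{(\pm k_j)}\big)^{-1}\big(M^{(\pm k_j)}_-\big)^{-1},\qquad k\in\mathcal X^\varepsilon_{\pm k_j}.
\ee
Both $M^{(\pm k_j)}_-$ and its inverse are bounded by \eqref{3.64}, and $(J^{(\pm k_j)})^{-1}$ is bounded since the triangular entries of $J^{(\pm k_j)}$ carry the decaying Gaussian factors $\e^{\pm\ii z^2/2}$ along the cross; hence $\|\hat w\|_{L^n(\hat\Sigma_4)}\le C\|J^{(2)}-J^{(\pm k_j)}\|_{L^n(\mathcal X^\varepsilon_{\pm k_j})}$, and \eqref{3.65} is exactly the bound \eqref{3.83}.

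Of these four steps, \eqref{3.81}--\eqref{3.83} are essentially bookkeeping once Lemmas~\ref{lem2}--\ref{lem3} are in hand; the point requiring the most care is \eqref{3.80}, where one must check that the decay rate $c$ can be chosen independently of $\xi\in\mathcal I$ and $t>3$. This is precisely where excluding the endpoints $\xi=-9\alpha^2/(20\beta)$ (at which $k_1$ and $k_2$ coalesce) and $\xi=0$ from $\mathcal I$ is used: it guarantees a uniform positive lower bound for $|\text{Re}\,\Phi|$ on $\hat\Sigma_1$ away from the excised neighbourhoods of the stationary points.
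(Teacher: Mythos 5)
Your proposal is correct and follows essentially the same route as the paper: the exponential decay of $r_{j,a}\delta^{\mp2}\e^{\pm t\Phi}$ away from the stationary points on $\hat\Sigma_1$, the $O(t^{-3/2})$ remainder bounds from Lemma~\ref{lem2} on $\hat\Sigma_2$, the estimate \eqref{3.66} on $\hat\Sigma_3$, and the identity $\hat w=M^{(\pm k_j)}_-(J^{(2)}-J^{(\pm k_j)})(M^{(\pm k_j)}_+)^{-1}$ combined with \eqref{3.64}--\eqref{3.65} on $\hat\Sigma_4$. Your factorization through $(J^{(\pm k_j)})^{-1}(M^{(\pm k_j)}_-)^{-1}$ is just the paper's $(M^{(\pm k_j)}_+)^{-1}$ rewritten, so the argument is the same up to this cosmetic rearrangement.
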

\begin{proof}
For $k\in\Omega_1\cap\{k\in\bfC|\text{Re}k>k_2\}\cap\hat{\Sigma}_1$, we have $-|\text{Re}\Phi(k)|\leq-c\varepsilon^2.$
Since $\hat{w}$ only has a nonzero $r_{1,a}\delta^{-2}\e^{t\Phi}$ in $(21)$ entry, hence, for $t\geq1$, by \eqref{3.15}, we get
\bea
|\hat{w}_{21}|&=&|r_{1,a}\delta^{-2}\e^{t\Phi}|
\leq\frac{C}{1+|k|^2}\e^{-\frac{3t}{4}|\text{Re}\Phi|}\leq  C\e^{-c\varepsilon^2t}.\nn
\eea
In a similar way, the other estimates on $\hat{\Sigma}_1$ hold. This proves \eqref{3.80}.
Since the matrix $\hat{w}$ on $\hat{\Sigma}_2$ only involves the small remainders $r_{j,r}$ for $j=1,\cdots,4$, by Lemma \ref{lem2}, the estimate \eqref{3.81} follows.
The inequality \eqref{3.82} is a consequence of \eqref{3.66}, \eqref{3.76} and \eqref{3.79}.
For $k\in\mathcal{X}_{\pm k_j}^\varepsilon$, we find $$\hat{w}=M^{(\pm k_j)}_-(J^{(2)}-J^{(\pm k_j)})(M^{(\pm k_j)}_+)^{-1},\quad j=1,2.$$ Therefore, it follows from \eqref{3.64} and \eqref{3.65} that the estimate \eqref{3.83} holds.
\end{proof}
The estimates in Lemma \ref{lem4} imply that
\be\label{3.84}
\begin{aligned}
\|\hat{w}\|_{(L^1\cap L^2)(\hat{\Sigma})}&\leq Ct^{-1/2},\\
\|\hat{w}\|_{L^\infty(\hat{\Sigma})}&\leq Ct^{-1/2}\ln t,
\end{aligned}
\quad t>3,~ \xi\in\mathcal{I}.
\ee
Let $\hat{C}$ denote the Cauchy operator associated with $\hat{\Sigma}$:
\berr
(\hat{C}f)(k)=\int_{\hat{\Sigma}}\frac{f(\zeta)}{\zeta-k}\frac{\dd \zeta}{2\pi\ii},\quad k\in\bfC\setminus\hat{\Sigma},~f\in L^2(\hat{\Sigma}).
\eerr
We denote the boundary values of $\hat{C}f$ from the left and right sides of $\hat{\Sigma}$ by $\hat{C}_+f$ and $\hat{C}_-f$, respectively. Define the operator $\hat{C}_{\hat{w}}$: $L^2(\hat{\Sigma})+L^\infty(\hat{\Sigma})\rightarrow L^2(\hat{\Sigma})$ by $\hat{C}_{\hat{w}}f=\hat{C}_-(f\hat{w}),$ that is, $\hat{C}_{\hat{w}}$ is defined by $\hat{C}_{\hat{w}}(f)=\hat{C}_+(f\hat{w}_-)+\hat{C}_-(f\hat{w}_+)$ where we have chosen, for simplicity, $\hat{w}_+=\hat{w}$ and $\hat{w}_-=0$. Then, by \eqref{3.84}, we find
\be\label{3.85}
\|\hat{C}_{\hat{w}}\|_{B(L^2(\hat{\Sigma}))}\leq C\|\hat{w}\|_{L^\infty(\hat{\Sigma})}\leq Ct^{-1/2}\ln t,
\ee
where $B(L^2(\hat{\Sigma}))$ denotes the Banach space of bounded linear operators $L^2(\hat{\Sigma})\rightarrow L^2(\hat{\Sigma})$. Therefore, there exists a $T>0$ such that $I-\hat{C}_{\hat{w}}\in B(L^2(\hat{\Sigma}))$ is invertible for all $\xi\in\mathcal{I},$ $t>T$. Following this, we may define the $2\times2$ matrix-valued function $\hat{\mu}(x,t;k)$ whenever $t>T$ by
\be\label{3.86}
\hat{\mu}=I+\hat{C}_{\hat{w}}\hat{\mu}.
\ee
Then
\be\label{3.87}
\hat{M}(x,t;k)=I+\frac{1}{2\pi\ii}\int_{\hat{\Sigma}}\frac{(\hat{\mu}\hat{w})(x,t;\zeta)}{\zeta-k}\dd\zeta,\quad k\in\bfC\setminus\hat{\Sigma}
\ee
is the unique solution of the RH problem \eqref{3.78} for $t>T$. Moreover, using the Neumann series (see \cite{JL3}), the function $\hat{\mu}(x,t;k)$ satisfies
\be\label{3.88}
\|\hat{\mu}(x,t;\cdot)-I\|_{L^2(\hat{\Sigma})}=O(t^{-1/2}),\quad t\rightarrow\infty,~\xi\in\mathcal{I}.
\ee
It follows from \eqref{3.87} that
\be\label{3.89}
\lim_{k\rightarrow\infty}k(\hat{M}(x,t;k)-I)=-\frac{1}{2\pi\ii}\int_{\hat{\Sigma}}(\hat{\mu}\hat{w})(x,t;k)\dd k.
\ee
Using \eqref{3.80} and \eqref{3.88}, we have
\bea
\int_{\hat{\Sigma}_1}(\hat{\mu}\hat{w})(x,t;k)\dd k&=&\int_{\hat{\Sigma}_1}\hat{w}(x,t;k)\dd k+\int_{\hat{\Sigma}_1}(\hat{\mu}(x,t;k)-I)\hat{w}(x,t;k)\dd k\nn\\
&\leq&\|\hat{w}\|_{L^1(\hat{\Sigma}_1)}+\|\hat{\mu}-I\|_{L^2(\hat{\Sigma}_1)}\|\hat{w}\|_{L^2(\hat{\Sigma}_1)}\nn\\
&\leq&C\e^{-ct},\quad t\rightarrow\infty.\nn
\eea
By \eqref{3.81} and \eqref{3.88}, the contribution from $\hat{\Sigma}_2$ to the right-hand side of \eqref{3.89} is
\berr
O(\|\hat{w}\|_{L^1(\hat{\Sigma}_2)}+\|\hat{\mu}-I\|_{L^2(\hat{\Sigma}_2)}
\|\hat{w}\|_{L^2(\hat{\Sigma}_2)})=O(t^{-3/2}),\quad t\rightarrow\infty.
\eerr
Similarly, by \eqref{3.83} and \eqref{3.88}, the contribution from $\hat{\Sigma}_4$ to the right-hand side of \eqref{3.89} is $$O(\|\hat{w}\|_{L^1(\hat{\Sigma}_4)}+\|\hat{\mu}-I\|_{L^2(\hat{\Sigma}_4)}
\|\hat{w}\|_{L^2(\hat{\Sigma}_4)})=O(t^{-1}\ln t),\quad t\rightarrow\infty.$$
Finally, by \eqref{3.67}-\eqref{3.70}, \eqref{3.82} and \eqref{3.88}, we can get
\bea
&&~~-\frac{1}{2\pi\ii}\int_{\hat{\Sigma}_3}(\hat{\mu}\hat{w})(x,t;k)\dd k\nn\\
&&\quad=-\frac{1}{2\pi\ii}\int_{\hat{\Sigma}_3}\hat{w}(x,t;k)\dd k-\frac{1}{2\pi\ii}\int_{\hat{\Sigma}_3}(\hat{\mu}(x,t;k)-I)\hat{w}(x,t;k)\dd k\nn\\
&&\quad=-\frac{1}{2\pi\ii}\int_{\partial D_\varepsilon(k_1)}\bigg((M^{(k_1)})^{-1}(x,t;k)-I\bigg)\dd k-\frac{1}{2\pi\ii}\int_{\partial D_\varepsilon(-k_1)}\bigg((M^{(-k_1)})^{-1}(x,t;k)-I\bigg)\dd k\nn\\
&&\qquad-\frac{1}{2\pi\ii}\int_{\partial D_\varepsilon(k_2)}\bigg((M^{(k_2)})^{-1}(x,t;k)-I\bigg)\dd k-\frac{1}{2\pi\ii}\int_{\partial D_\varepsilon(-k_2)}\bigg((M^{(-k_2)})^{-1}(x,t;k)-I\bigg)\dd k\nn\\
&&\qquad+O(\|\hat{\mu}-I\|_{L^2(\hat{\Sigma}_3)}
\|\hat{w}\|_{L^2(\hat{\Sigma}_3)})\nn\\
&&\quad=\frac{(\delta_{k_1}^0)^{\hat{\sigma}_3}M^Y_1(\xi)}
{4\sqrt{tk_1(3\alpha-40\beta k_1^2)}}-\frac{(\delta_{-k_1}^0)^{\hat{\sigma}_3}\overline{M^Y_1(\xi)}}
{4\sqrt{tk_1(3\alpha-40\beta k_1^2)}}\nn\\
&&~\qquad+\frac{(\delta_{k_2}^0)^{\hat{\sigma}_3}M^X_1(\xi)}
{4\sqrt{tk_2(40\beta k_2^2-3\alpha)}}-\frac{(\delta_{-k_2}^0)^{\hat{\sigma}_3}\overline{M^X_1(\xi)}}
{4\sqrt{tk_2(40\beta k_2^2-3\alpha)}}+O(t^{-1}),\quad t\rightarrow\infty.\nn
\eea
Thus, we obtain the following important relation
\bea\label{3.90}
\begin{aligned}
\lim_{k\rightarrow\infty}k(\hat{M}(x,t;k)-I)&=\frac{(\delta_{k_1}^0)^{\hat{\sigma}_3}M^Y_1(\xi)}
{4\sqrt{tk_1(3\alpha-40\beta k_1^2)}}-\frac{(\delta_{-k_1}^0)^{\hat{\sigma}_3}\overline{M^Y_1(\xi)}}
{4\sqrt{tk_1(3\alpha-40\beta k_1^2)}}\\
&+\frac{(\delta_{k_2}^0)^{\hat{\sigma}_3}M^X_1(\xi)}
{4\sqrt{tk_2(40\beta k_2^2-3\alpha)}}-\frac{(\delta_{-k_2}^0)^{\hat{\sigma}_3}\overline{M^X_1(\xi)}}
{4\sqrt{tk_2(40\beta k_2^2-3\alpha)}}+O(t^{-1}\ln t),\quad t\rightarrow\infty.
\end{aligned}
\eea
Taking into account that \eqref{3.4}, \eqref{3.8}, \eqref{3.37} and \eqref{3.77}, for sufficient large $k\in\bfC\setminus\hat{\Sigma}$, we get
\bea\label{3.91}
u(x,t)&=&-2\ii\lim_{k\rightarrow\infty}(kM(x,t;k))_{12}\nn\\
&=&-2\ii\lim_{k\rightarrow\infty}k(\hat{M}(x,t;k)-I)_{12}\nn\\
&=&-\bigg(\frac{\beta^Y(p)(\delta_{k_1}^0)^{2}+\overline{\beta^Y(p)}(\delta_{-k_1}^0)^{2}}
{2\sqrt{tk_1(3\alpha-40\beta k_1^2)}}+\frac{\beta^X(q)(\delta_{k_2}^0)^2+\overline{\beta^X(q)}(\delta_{-k_2}^0)^2}{2\sqrt{tk_2(40\beta k_2^2-3\alpha)}}\bigg)+O\bigg(\frac{\ln t}{t}\bigg).\nn
\eea
Using
\be
\delta_{-k_1}^0=\overline{\delta_{k_1}^0},\quad\delta_{-k_2}^0=\overline{\delta_{k_2}^0}
\ee
as $\chi_j(-k_j)=-\chi_j(k_j)=\overline{\chi_j(k_j)},~j=1,2$, and collecting the above computations, we obtain our main results as stated in the Theorem \ref{the4.2}.
\section{Asymptotics for a special case $\alpha=0$}
\setcounter{equation}{0}
\setcounter{lemma}{0}
\setcounter{theorem}{0}
In this section, we consider the long-time asymptotics to the solutions for a particular case, namely, $\alpha=0$ of emKdV equation \eqref{1.1},
\begin{equation}\label{4.1}
u_t+\beta(30u^4u_x+10u_x^3+40uu_xu_{xx}+10u^2u_{xxx}+u_{xxxxx})=0.
\end{equation}
As we all know, the Hirota equation can be reduced to the complex-valued mKdV equation under the Galilean transformation. Thus, if we rewrite equation \eqref{1.1} as a complex-valued form
\be\label{4.2}
u_t+\alpha(6|u|^2u_x+u_{xxx})+\beta[30|u|^4u_x+10(u|u_x|^2+|u|^2u_{xx})_x+u_{xxxxx}]=0,
\ee
similarly, the Galilean transformation can reduce \eqref{4.2} into a complex-valued form of \eqref{4.1}, however, where we take $u$ is a real-valued function. In fact, the fifth order KdV equation indeed can be excluded the third derivative term, (see \cite{DG}), where its prolongation structure was considered.

As in Section 2, under the condition that $a(k)\neq0$ for $\{k\in\bfC|\text{Im}k\geq0\}$, the corresponding RH problem associated with \eqref{4.1} is
\be\label{4.3}
\left\{
\begin{aligned}
&N_+(x,t;k)=N_-(x,t;k)v(x,t;k),~k\in \bfR,\\
&N(x,t;k)\rightarrow I,~\qquad\qquad\qquad\qquad~k\rightarrow\infty,
\end{aligned}
\right.
\ee
where the jump matrix $v(x,t;k)$ is defined by
\be\label{4.4}
\begin{aligned}
&v(x,t;k)=\begin{pmatrix}
1+|r(k)|^2 ~& \bar{r}(k)\e^{-t\theta(k)}\\[4pt]
r(k)\e^{t\theta(k)} ~& 1
\end{pmatrix},\\
&r(k)=\frac{\bar{b}(k)}{a(k)},~ \theta(k)=2\ii(16\beta k^5-k\xi),~ \xi=\frac{x}{t}.
\end{aligned}
\ee
Also we have
\be\label{4.5}
r(-k)=\overline{r(\bar{k})},\quad k\in\bfR.
\ee
Moreover, the relation between the solution $u(x,t)$ of the equation \eqref{4.1} and $N(x,t;k)$ is
\be\label{4.6}
u(x,t)=-2\ii\lim_{k\rightarrow\infty}(kN(x,t;k))_{12}.
\ee
For this case, there are two real and pure imaginary critical points of $\theta(k)$ located at the points $\pm k_0$ and $\pm\ii k_0$, where
\be\label{4.7}
k_0=\sqrt[4]{\frac{\xi}{80\beta}}.
\ee

Our aim in this section is to find the asymptotics of solution $u(x,t)$ to the equation \eqref{4.1} in region $0<x\leq Mt^{\frac{1}{5}}$, where $M>1$ is a constant. We see that as $t\rightarrow\infty$, the critical points $\pm k_0$ approach 0 at least as fast as $t^{-\frac{1}{5}}$, i.e., $k_0\leq\big(\frac{M}{80\beta}\big)^{\frac{1}{4}}t^{-\frac{1}{5}}$. We will show that the asymptotics of the solution $u(x,t)$ in this region is given in terms of the solution of a fourth order Painlev\'e II equation.

Let $\Gamma\subset\bfC$ denote the contour $\Gamma=\bfR\cup\Gamma_1\cup\Gamma_2$, where
\be
\begin{aligned}
\Gamma_1&=\{k_0+l\e^{\frac{\pi\ii}{6}}|l\geq0\}\cup\{-k_0+l\e^{\frac{5\pi\ii}{6}}|l\geq0\},\\
\Gamma_2&=\{k_0+l\e^{-\frac{\pi\ii}{6}}|l\geq0\}\cup\{-k_0+l\e^{-\frac{5\pi\ii}{6}}|l\geq0\},
\end{aligned}
\ee
and we orient $\Gamma$ to the right. Let $\mathcal{V}$ and $\mathcal{V}^*$ denote the triangular domains shown in Fig. \ref{fig7}.
\begin{figure}[htbp]
\centering
\includegraphics[width=3.5in]{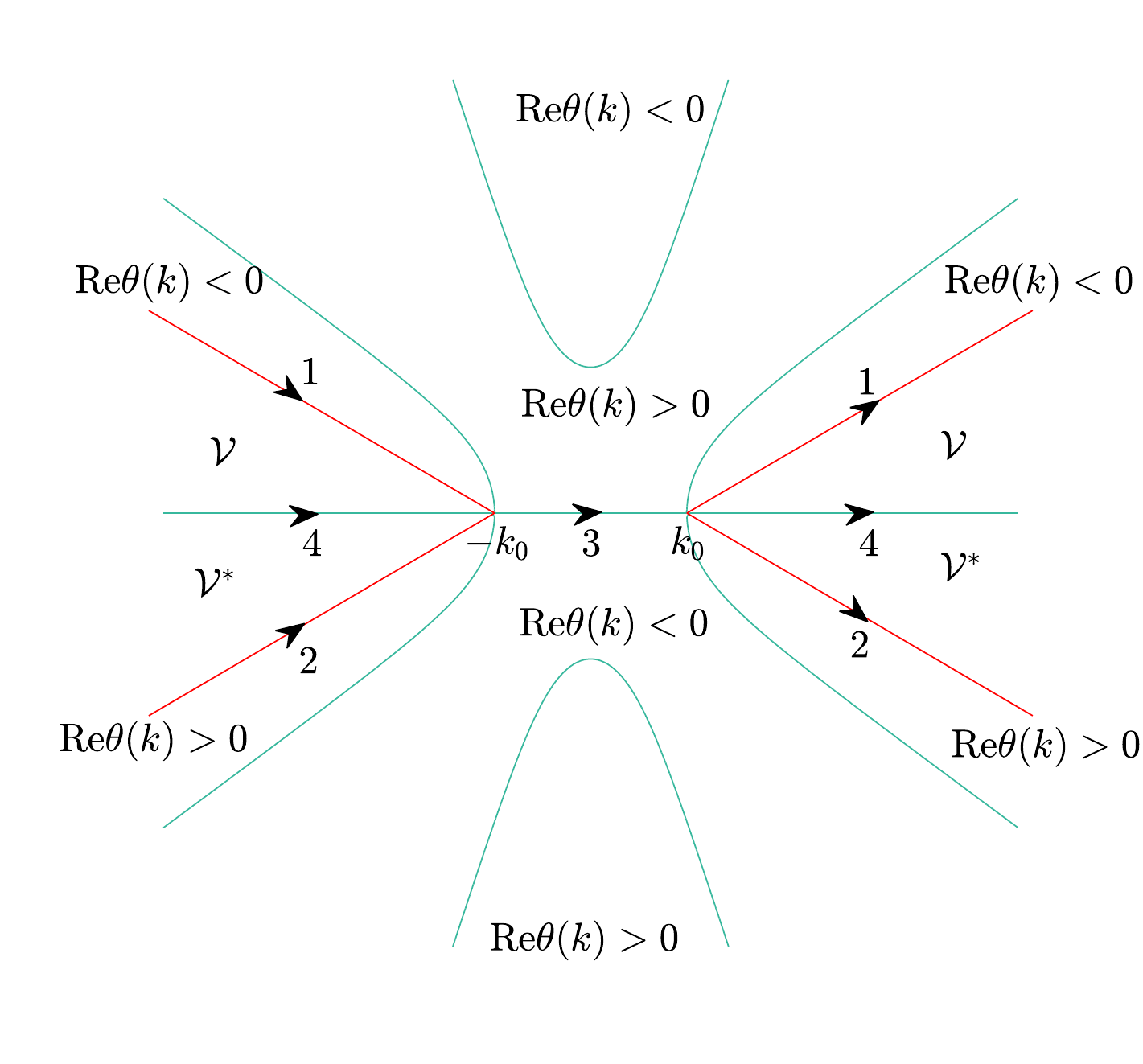}
\caption{The oriented contour $\Gamma$ and the sets $\mathcal{V}$ and $\mathcal{V}^*$.}\label{fig7}
\end{figure}

Denote $$\mathcal{P}=\{(x,t)\in\bfR^2|0<x\leq Mt^{\frac{1}{5}},t\geq3\}.$$ By Lemma \ref{lem2}, we also have the following analytic approximation lemma for $r(k)$.
\begin{lemma}\label{lemma4.1}
There exists a decomposition
\be
r(k)=r_a(x,t,k)+r_r(x,t,k), \quad k\in(-\infty,-k_0)\cup(k_0,\infty),
\ee
where the functions $r_a$ and $r_r$ satisfy the following properties:\\
(i) For $(x,t)\in\mathcal{P}$, $r_{a}(x,t,k)$ is defined and continuous for $k\in\bar{\mathcal{V}}$ and analytic for $\mathcal{V}$.\\
(ii) The function $r_{a}$ satisfies
\be\label{4.10}
|r_{a}(x,t,k)|\leq \frac{C}{1+|k|^2}\e^{\frac{t}{4}|\text{Re}\theta(k)|},~k\in\bar{\mathcal{V}},
\ee
and
\be
|r_{a}(x,t,k)-r(k_0)|\leq C|k-k_0|\e^{\frac{t}{4}|\text{Re}\theta(k)|},~k\in\bar{\mathcal{V}}.
\ee
(iii) The $L^1, L^2$ and $L^\infty$ norms of the function $r_{r}(x,t,\cdot)$ on $(-\infty,-k_0)\cup(k_0,\infty)$ are $O(t^{-3/2})$ as $t\rightarrow\infty$ uniformly with respect to $(x,t)\in\mathcal{P}$.\\
(iv) The following symmetries hold:
\be\label{4.12}
r_{a}(x,t,k)=\overline{r_{a}(x,t,-\bar{k})},~r_{r}(x,t,k)=\overline{r_{r}(x,t,-\bar{k})}.
\ee
\end{lemma}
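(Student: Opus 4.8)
The plan is to adapt the proof of Lemma~\ref{lem2} to the phase $\theta(k)$ of \eqref{4.4} and to the region $\mathcal{P}$. Since $u_0\in\mathcal{S}(\bfR)$ we have $r\in\mathcal{S}(\bfR)$, so all the Taylor data and Sobolev norms of $r$ and its derivatives that will appear are finite. The only structural novelty compared with Section~3 is that the two stationary points $\pm k_0$ of $\theta$ are symmetric about the origin and, on $\mathcal{P}$, satisfy $0<k_0=(\xi/80\beta)^{1/4}\leq(M/80\beta)^{1/4}t^{-1/5}$; in particular, for $t\geq3$ the point $k_0$ ranges over a fixed compact subset of $[0,\infty)$. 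This is precisely what upgrades ``uniformly for $\xi\in\mathcal{I}$'' in Lemma~\ref{lem2} to ``uniformly for $(x,t)\in\mathcal{P}$'' here: any quantity assembled from $r$ and finitely many of its derivatives evaluated at points near $k_0$ is bounded uniformly on $\mathcal{P}$ (the relevant suprema run over a compact set, and Taylor remainders of $r$ are controlled by $L^\infty(\bfR)$ norms of the derivatives of $r$, which are finite).

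Concretely, I would build the decomposition on $(k_0,\infty)$ and then extend it to $(-\infty,-k_0)$ by the symmetry $r(-k)=\overline{r(\bar k)}$, following the treatment of $\{\text{Re}\,k>k_2\}$ in Lemma~\ref{lem2}. Choose a rational function $f_0(k)=\sum_j a_j(x,t)(k-\ii)^{-j}$, with poles only at $k=\ii$ (hence none in $\bar{\mathcal{V}}$), whose coefficients are fixed by requiring that $f_0$ agree with a sufficiently long Taylor polynomial of $r$ at $k_0$; since $k_0$ lies in a fixed compact set the resulting interpolation system is uniformly invertible, so the $a_j$ exist, are unique and uniformly bounded, and in particular $f_0(k_0)=r(k_0)$. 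Put $f=r-f_0$; then $\frac{\dd^n}{\dd k^n}f(k)$ is $O((k-k_0)^{N-n})$ as $k\to k_0$ and $O(k^{-5-n})$ as $k\to\infty$ along $\bfR$, for $n=0,1,2$, with constants uniform on $\mathcal{P}$. The map $k\mapsto\phi(k)=-\ii\theta(k)=32\beta k^5-2\xi k$, for which $\phi'(k)=160\beta(k^2-k_0^2)(k^2+k_0^2)$, is an increasing bijection of $(k_0,\infty)$ onto $(-128\beta k_0^5,\infty)$; setting $F(\phi)=\frac{(k-\ii)^3}{k-k_0}f(k)$ for $\phi>-128\beta k_0^5$ and $F\equiv0$ otherwise, and using $F^{(n)}(\phi)=\big(\tfrac{1}{\phi'(k)}\partial_k\big)^nF$ together with the bounds on $f$, one checks that $F\in H^2(\bfR)$ with $\|F\|_{H^2(\bfR)}$ bounded uniformly on $\mathcal{P}$. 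Passing to the Fourier transform $\hat F$, writing $F(\phi)=\int_\bfR\hat F(s)\e^{\ii\phi s}\,\dd s$ and cutting the integral at $s=-t/4$, I set
\berr
r_a(x,t,k)&=&f_0(k)+\frac{k-k_0}{(k-\ii)^3}\int_{-\frac{t}{4}}^\infty\hat F(s)\e^{s\theta(k)}\,\dd s,\\
r_r(x,t,k)&=&\frac{k-k_0}{(k-\ii)^3}\int^{-\frac{t}{4}}_{-\infty}\hat F(s)\e^{s\theta(k)}\,\dd s.
\eerr
On $\bar{\mathcal{V}}$ one has $\text{Re}\,\theta(k)\leq0$ (by the choice of $\Gamma$ and $\mathcal{V}$; see Fig.~\ref{fig7}), so the integral defining $r_a$ converges, $r_a$ is analytic in $\mathcal{V}$ and continuous on $\bar{\mathcal{V}}$; the bound \eqref{4.10} follows from $\|\hat F\|_{L^1(\bfR)}<\infty$, $\sup_{s\geq-t/4}\e^{s\text{Re}\,\theta(k)}\leq\e^{\frac{t}{4}|\text{Re}\,\theta(k)|}$ and $|k-k_0|/|k-\ii|^3\leq C/(1+|k|^2)$; the new estimate $|r_a(x,t,k)-r(k_0)|\leq C|k-k_0|\e^{\frac{t}{4}|\text{Re}\,\theta(k)|}$ follows because the integral term in $r_a$ carries the factor $k-k_0$ while $f_0(k)-r(k_0)=f_0(k)-f_0(k_0)=O(|k-k_0|)$ on $\bar{\mathcal{V}}$; and the $O(t^{-3/2})$ bound on the $L^1,L^2,L^\infty$ norms of $r_r(x,t,\cdot)$ on $(k_0,\infty)$ follows, exactly as in the derivation of \eqref{3.27}, from $\|s^2\hat F(s)\|_{L^2(\bfR)}<\infty$ and $\int^{-t/4}_{-\infty}s^{-4}\,\dd s=O(t^{-3})$. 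Choosing $f_0$ compatibly with $r(-k)=\overline{r(\bar k)}$ gives \eqref{4.12}, and reflecting the decomposition across the origin completes it on $(-\infty,-k_0)$.

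The hard part is the uniformity as $k_0\to0$, that is, as the two stationary points $\pm k_0$ coalesce at the origin: one must check that no constant in the chain ``bounds on $f$ near $k_0$ $\Rightarrow$ $\|F\|_{H^2}$ $\Rightarrow$ $L^p$ bounds on $r_a,r_r$'' degenerates. The delicate point is in the change of variables: near the origin $\phi'(k)=160\beta(k^2-k_0^2)(k^2+k_0^2)$ carries a double small scale, vanishing at $k_0$ with coefficient of size $O(k_0^3)$, so that each application of $\tfrac{1}{\phi'}\partial_k$ in the computation of $F^{(n)}$ costs a factor $\sim k_0^{-3}$ on $\{|k-k_0|\lesssim k_0\}$. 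Balancing these factors against the weight $\phi'$ in $\|F^{(n)}\|_{L^2}^2=\int|F^{(n)}(k)|^2\phi'(k)\,\dd k$ shows that keeping $\|F\|_{H^2}$ bounded uniformly down to $k_0=0$ forces $f$ to vanish at $k_0$ to somewhat higher order than in the non-coalescing situation of Lemma~\ref{lem2} ($N$ must be taken larger, of size about $8$ rather than $6$; equivalently, one can instead use the even/odd splitting $r(k)=r_+(k^2)+kr_-(k^2)$ and Taylor-expand $r_\pm$ in $k^2-k_0^2$ exactly as in the $\{|k|<k_1\}$ part of Lemma~\ref{lem2}, where the extra vanishing is provided for free by that structure). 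Once this order is fixed, every estimate above holds with $k_0$-independent constants, and the remainder of the argument is identical, step for step, to that of Lemma~\ref{lem2}.
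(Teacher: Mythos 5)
Your proposal is correct and follows essentially the same route as the paper, which gives no separate proof of this lemma but simply invokes the construction of Lemma \ref{lem2}: your decomposition via the rational interpolant $f_0$ matched to the Taylor data of $r$ at $k_0$, the change of variables $\phi=-\ii\theta$ with $\phi'(k)=160\beta(k^2-k_0^2)(k^2+k_0^2)$, and the Fourier splitting of $\hat F$ at $s=-t/4$ is exactly that construction adapted to the fifth-order phase. Your observation that the coalescence $k_0\to 0$ on $\mathcal{P}$ forces a higher order of vanishing of $f$ at $k_0$ (or, equivalently, the even/odd splitting in powers of $k^2-k_0^2$) in order to keep $\|F\|_{H^2(\bfR)}$ bounded uniformly is a genuine point that the paper passes over in silence, and your resolution of it is sound.
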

The first transform is as follows:
\be\label{4.13}
N^{(1)}(x,t;k)=N(x,t;k)\times\left\{
\begin{aligned}
&\begin{pmatrix}
1 & 0\\[4pt]
-r_a(x,t,k)\e^{t\theta(k)} & 1
\end{pmatrix},~~~~k\in \mathcal{V},\\
&\begin{pmatrix}
1 &~ \overline{r_a(x,t,\bar{k})}\e^{-t\theta(k)}\\[4pt]
0 & 1
\end{pmatrix},~~~~k\in \mathcal{V}^*,\\
&I,\qquad\qquad\qquad\qquad~~~~~~~~ \text{elsewhere}.
\end{aligned}
\right.
\ee
Then we obtain the RH problem
\be\label{4.14}
N^{(1)}_+(x,t;k)=N^{(1)}_-(x,t;k)v^{(1)}(x,t,k)
\ee
on the contour $\Gamma$ depicted in Fig. \ref{fig7}. The jump matrix $v^{(1)}(x,t,k)$ is given by
\bea
v^{(1)}_1&=&\begin{pmatrix}
1 &~ 0\\[4pt]
r_a\e^{t\theta} &~ 1
\end{pmatrix},\qquad\qquad\qquad v^{(1)}_2=\begin{pmatrix}
1 &~ \bar{r}_a\e^{-t\theta}\\[4pt]
0 &~ 1
\end{pmatrix},\nn\\
v^{(1)}_3&=&\begin{pmatrix}
1 &~ \bar{r}\e^{-t\theta}\\[4pt]
0 &~ 1
\end{pmatrix}\begin{pmatrix}
1 &~ 0\\[4pt]
r\e^{t\theta} &~ 1
\end{pmatrix},~~~~v^{(1)}_4=\begin{pmatrix}
1 ~& \bar{r}_r\e^{-t\theta} \\[4pt]
0 ~& 1 \\
\end{pmatrix}\begin{pmatrix}
1 &~ 0\\[4pt]
r_r\e^{t\theta} &~ 1
\end{pmatrix},\nn
\eea
where $v^{(1)}_i$ denotes the restriction of $v^{(1)}$ to the contour labeled by $i$ in Fig. \ref{fig7}.

Let us introduce the new variables $y$ and $z$ by
\be
y=\frac{-x}{(20\beta t)^{\frac{1}{5}}},\quad z=(20\beta t)^{\frac{1}{5}}k,
\ee
such that
\be
t\theta(k)=2\ii\bigg(\frac{4}{5}z^5+yz\bigg).
\ee
We now have $-C\leq y<0$. Fix $\varepsilon>0$ and let $D_\varepsilon(0)=\{k\in\bfC||k|<\varepsilon\}$ denote the open disk of radius $\varepsilon$ centered at the origin. Let $\Gamma^\varepsilon=(\Gamma\cap D_\varepsilon(0))\setminus((-\infty,-k_0)\cup(k_0,\infty))$. Let $Z$ denote the contour defined in (B.1) with $z_0=(20\beta t)^{\frac{1}{5}}k_0=\sqrt[4]{-y}/\sqrt{2}$. The map $k\mapsto z$ maps $\Gamma^\varepsilon$ onto $Z\cap\{z\in\bfC||z|<(20\beta t)^{\frac{1}{5}}\varepsilon\}$. We write $\Gamma^\varepsilon=\cup_{j=1}^3\Gamma_j^\varepsilon$, where $\Gamma_j^\varepsilon$ denotes the inverse image of $Z_j\cap\{z\in\bfC||z|<(20\beta t)^{\frac{1}{5}}\varepsilon\}$ under this map, see Fig. \ref{fig8}.
\begin{figure}[htbp]
\centering
\includegraphics[width=3in]{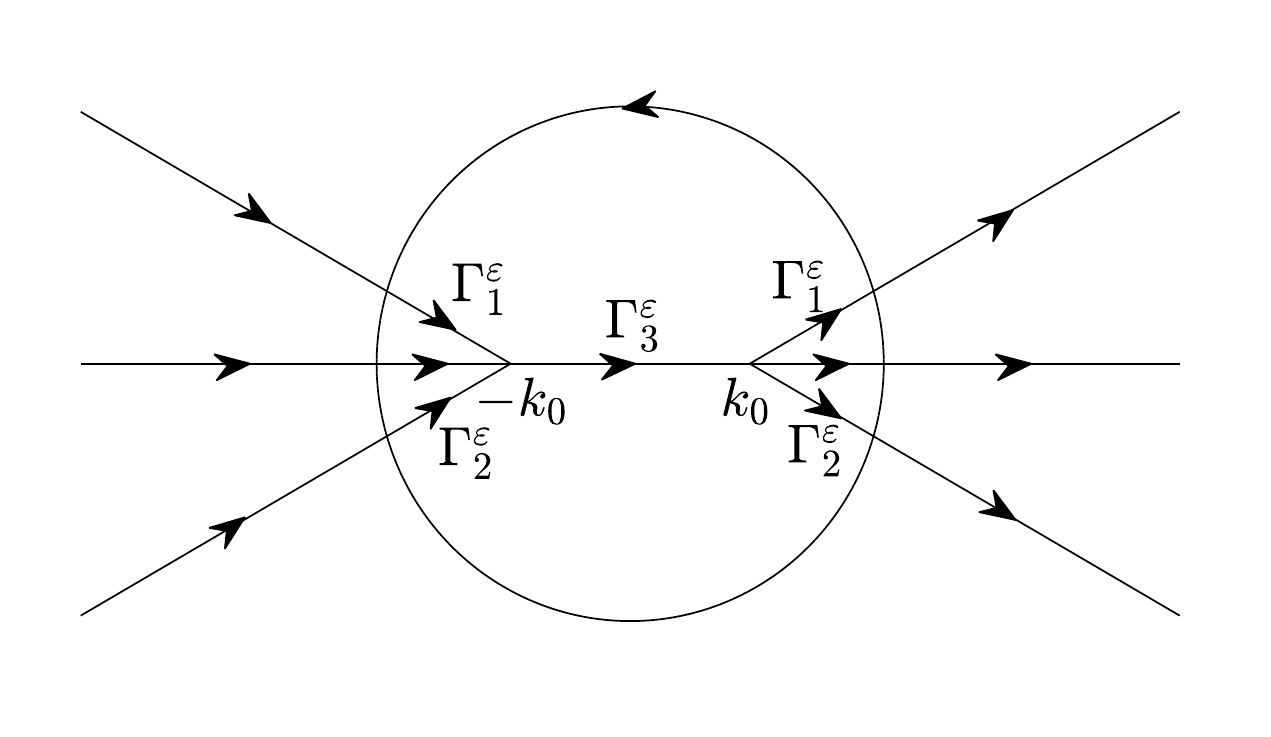}
\caption{The oriented contour $\hat{\Gamma}$ and $\Gamma^\varepsilon$.}\label{fig8}
\end{figure}

For large $t$ and fixed $z$, the jump matrices $\{v^{(1)}_j\}_1^4$ tend to the matrix $v^Z$ defined in (B.3) if we set $s=r(0)$.
Thus we expect that $N^{(1)}$ approaches the solution $N^0(x,t,k)$ defined by
\be\label{4.17}
N^0(x,t;k):=N^Z(s,y,z,z_0)
\ee
for large $t$, where $N^Z(s,y,z,z_0)$ is the solution of the model RH problem for (B.2) with $z_0=\sqrt[4]{-y}/\sqrt{2}$. Moreover, if $(x,t)\in\mathcal{P}$, then $(y,t,z_0)\in\Bbb P$, where $\Bbb P$ is the parameter subset defined in (B.4). Thus, Lemma B.1 ensures that $N^0$ is well-defined by \eqref{4.17}.
\begin{lemma}
For each $(x,t)\in\mathcal{P}$, the function $N^0(x,t;k)$ defined in \eqref{4.17} is an analytic function of $k\in D_\varepsilon(0)\setminus\Gamma^\varepsilon$ such that
\be\label{4.18}
|N^0(x,t;k)|\leq C,\quad(x,t)\in\mathcal{P},~k\in D_\varepsilon(0)\setminus\Gamma^\varepsilon.
\ee
Across $\Gamma^\varepsilon$, $N^0$ obeys the jump condition $N_+^0=N_-^0v^0$, where the jump matrix $v^0$ satisfies, for $1\leq n\leq\infty$,
\be\label{4.19}
\|v^{(1)}-v^0\|_{L^n(\Gamma^\varepsilon)}\leq Ct^{-\frac{1}{5}(1+\frac{1}{n})}, \quad(x,t)\in\mathcal{P}.
\ee
Furthermore, as $t\rightarrow\infty$, we have
\be\label{4.20}
\|N^0(x,t;k)^{-1}-I\|_{L^\infty(\partial D_\varepsilon(0))}=O(t^{-\frac{1}{5}}),
\ee
and
\be\label{4.21}
\frac{1}{2\pi\ii}\int_{\partial D_\varepsilon(0)}(N^0(x,t;k)^{-1}-I)\dd k=-\frac{\ii N_1^0(y)}{(20\beta t)^{\frac{1}{5}}},
\ee
where
\be\label{4.22}
 N_1^0(y)=\begin{pmatrix}
-2\int^y_{-\infty}u_p^2(y')\dd y' ~& u_p(y)\\
u_p(y) ~& 2\int^y_{-\infty}u_p^2(y')\dd y'
\end{pmatrix}.
\ee
\end{lemma}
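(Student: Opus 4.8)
The plan is to run the same scheme as in the proof of Lemma \ref{lem3}, with the single fourth order Painlev\'e II model $N^Z$ of Appendix B, attached at the origin, playing the role of the parabolic--cylinder parametrices used there; note that there is no analogue of the function $\delta(k)$ of Section 3 to insert here, precisely because the band $(-k_0,k_0)$ collapses to the point $k=0$ as $t\to\infty$. First I would dispose of the easy assertions: since $N^Z(s,y,z,z_0)$ is analytic in $z\in\bfC\setminus Z$ and the rescaling $k\mapsto z=(20\beta t)^{1/5}k$ is biholomorphic and maps $\Gamma^\varepsilon$ onto $Z\cap\{|z|<(20\beta t)^{1/5}\varepsilon\}$, the function $N^0$ of \eqref{4.17} is automatically analytic on $D_\varepsilon(0)\setminus\Gamma^\varepsilon$; and since $(x,t)\in\mathcal{P}$ forces $(y,t,z_0)\in\Bbb P$ with $z_0=\sqrt[4]{-y}/\sqrt{2}$, the uniform bound on $N^Z$ over $\Bbb P$ provided by Lemma B.1 gives \eqref{4.18}.

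The heart of the matter is the jump estimate \eqref{4.19}. The key point is that the scaling has been chosen so that $t\theta(k)=2\ii(\frac{4}{5}z^5+yz)$ coincides \emph{exactly} with the phase occurring in the model jump $v^Z$ of (B.3); consequently on $\Gamma^\varepsilon$ the matrix $v^{(1)}-v^0$ has only one nontrivial entry on each arc, and that entry is a product of a bounded exponential $\e^{\pm t\theta(k)}$ with one of $r_a(x,t,k)-r(0)$, $\overline{r_a(x,t,\bar k)-r(0)}$ (on the arcs $\Gamma_1^\varepsilon,\Gamma_2^\varepsilon$) or $r(k)-r(0)$ (on the segment lying on $(-k_0,k_0)$). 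I would then estimate $|r_a(x,t,k)-r(0)|\le|r_a(x,t,k)-r(k_0)|+|r(k_0)-r(0)|$, bounding the first term by $C|k-k_0|\e^{\frac{t}{4}|\text{Re}\theta(k)|}$ via Lemma \ref{lemma4.1}(ii) and the second by $Ck_0=O(t^{-1/5})$, using that on $\mathcal{P}$ one has $\xi\le Mt^{-4/5}$, hence $k_0\le(M/80\beta)^{1/4}t^{-1/5}$ by \eqref{4.7}, together with the smoothness of $r$; on the segment $(-k_0,k_0)$ one simply uses $|r(k)-r(0)|\le Ck_0$. Because $\Gamma$ is the steepest descent deformation of $\theta$, the factor $\e^{t\theta}$ decays along $\Gamma_1^\varepsilon\cup\Gamma_2^\varepsilon$ like $\e^{-c|\text{Im}(\frac{4}{5}z^5+yz)|}$, which dominates $\e^{-c|z-z_0|^2}$ near $z_0$ and $\e^{-c|z|^5}$ far out; combining this with $|k-k_0|=(20\beta t)^{-1/5}|z-z_0|$ yields, uniformly on $\mathcal{P}$, the pointwise bound $|v^{(1)}-v^0|\le Ct^{-1/5}(1+|z-z_0|)\e^{-c|\text{Im}(\frac{4}{5}z^5+yz)|}$ on $\Gamma_1^\varepsilon\cup\Gamma_2^\varepsilon$ and $|v^{(1)}-v^0|\le Ct^{-1/5}$ on $(-k_0,k_0)$. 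Changing variables to $z$ (so $\dd k=(20\beta t)^{-1/5}\dd z$) and using that $\int_Z(1+|z-z_0|)\e^{-c|\text{Im}(\frac{4}{5}z^5+yz)|}|\dd z|$ and the length $2z_0$ of the image of $(-k_0,k_0)$ are $O(1)$ uniformly over $\Bbb P$, one obtains
\[
\|v^{(1)}-v^0\|_{L^\infty(\Gamma^\varepsilon)}=O(t^{-1/5}),\qquad \|v^{(1)}-v^0\|_{L^1(\Gamma^\varepsilon)}=O(t^{-2/5}),
\]
and the interpolation inequality $\|f\|_{L^n}\le\|f\|_{L^\infty}^{1-1/n}\|f\|_{L^1}^{1/n}$ then produces \eqref{4.19}.

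Finally, on $\partial D_\varepsilon(0)$ the variable $z=(20\beta t)^{1/5}k$ has modulus $(20\beta t)^{1/5}\varepsilon\to\infty$, so the large-$z$ expansion $N^Z(s,y,z,z_0)=I+N_1^Z(y,z_0)/z+O(z^{-2})$ of Lemma B.1 (uniform in $\arg z$) immediately gives \eqref{4.20}, and the residue/Cauchy computation carried out exactly as in the passage from \eqref{2.22} to \eqref{3.67}--\eqref{3.70} gives $\frac{1}{2\pi\ii}\int_{\partial D_\varepsilon(0)}(N^0(x,t;k)^{-1}-I)\dd k=-N_1^Z(y,z_0)/(20\beta t)^{1/5}+O(t^{-2/5})$, which is \eqref{4.21} once one invokes the identification $N_1^Z(y,z_0)=\ii N_1^0(y)$ with $N_1^0$ as in \eqref{4.22} — its off-diagonal entries reproducing $u_p(y)$ and its diagonal entries the Hamiltonian integral $\mp2\int_{-\infty}^yu_p^2(y')\dd y'$ — this last fact being read off from the analysis of the model in Appendix B.

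I expect the main obstacle to be verifying that all of these estimates, and in particular \eqref{4.19} and the expansion of $N^Z$, hold \emph{uniformly} over the whole region $\mathcal{P}$, including the confluent limit $x\to0$ (equivalently $z_0\to0$) where the stationary points $\pm k_0,\pm\ii k_0$ coalesce at the origin and the quadratic behaviour of the phase at $z_0$ degenerates; this is exactly the circumstance that forces the fourth order Painlev\'e II model rather than a parabolic--cylinder one, and the required uniformity must be supplied by the uniform statements in Lemma \ref{lemma4.1} and Lemma B.1.
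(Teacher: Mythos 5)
Your proposal is correct and follows essentially the same route as the paper: the same triangulation $|r_a(k)-r(0)|\le|r_a(k)-r(k_0)|+|r(k_0)-r(0)|$ combined with $k_0=O(t^{-1/5})$, the same phase-decay estimate along the deformed rays, the $L^\infty$/$L^1$ interpolation to get \eqref{4.19}, and the large-$z$ expansion of $N^Z$ plus Cauchy's formula for \eqref{4.20}--\eqref{4.21}. The uniformity issue you flag in the confluent limit $z_0\to0$ is precisely what the paper resolves by the two-case estimate $|k-k_0|\ge k_0$ versus $|k-k_0|<k_0$, which trades the degenerate quadratic decay at $z_0$ for the uniform quintic bound $\e^{-\frac{3}{160}|z|^5}$.
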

\begin{proof}
The analyticity and boundedness of $N^0$ are a consequence of Lemma B.1. Moreover,
\be
v^{(1)}-v^0=\left\{
\begin{aligned}
&\begin{pmatrix}
0 &~ 0\\
(r_a(x,t,k)-r(0))\e^{t\theta} &~ 0
\end{pmatrix},\qquad\qquad\quad k\in\Gamma^\varepsilon_1,\\
&\begin{pmatrix}
0 &~ (\overline{r_a(x,t,\bar{k}})-\overline{r(0)})\e^{-t\theta}\\
0 &~ 0
\end{pmatrix},~~\quad\qquad\quad k\in\Gamma^\varepsilon_2,\\
&\begin{pmatrix}
|r(k)|^2-|r(0)|^2 &~ (\overline{r(k)}-\overline{r(0)})\e^{-t\theta}\\
(r(k)-r(0))\e^{t\theta} &~ 0
\end{pmatrix},~~ k\in\Gamma^\varepsilon_3.
\end{aligned}
\right.
\ee
For $k=k_0+l\e^{\frac{\pi\ii}{6}}$, $0\leq l\leq\varepsilon$, we obtain
\berr
\text{Re}\theta(k)=-16\beta l^2(l^3+5\sqrt{3}k_0l^2+20k_0^2l+10\sqrt{3}k_0^3)\leq-16\beta|k-k_0|^5.
\eerr
On the other hand, if $|k-k_0|\geq k_0$, then $|k-k_0|\geq|k|/2$, and hence
\berr
\e^{-12\beta t|k-k_0|^5}\leq\e^{-\frac{3}{8}\beta t|k|^5}.
\eerr
If $|k-k_0|<k_0$, then $|k|\leq Ct^{-\frac{1}{5}}$, and so
\berr
\e^{-12\beta t|k-k_0|^5}\leq 1\leq C\e^{-\frac{3}{8}\beta t|k|^5}.
\eerr
Thus, for $k=k_0+l\e^{\frac{\pi\ii}{6}}$, $0\leq l\leq\varepsilon$, we find
\be
\e^{-\frac{3}{4}t|\text{Re}\theta|}\leq\e^{-12\beta t|k-k_0|^5}\leq C\e^{-\frac{3}{8}\beta t|k|^5}\leq C\e^{-\frac{3}{160}|z|^5}.
\ee
As a consequence, we have
\be\label{4.25}
\begin{aligned}
|v^{(1)}-v^0|&\leq C|r_a(x,t,k)-r(k_0)|\e^{t\text{Re}\theta}+C|r(k_0)-r(0)|\e^{t\text{Re}\theta}\\
&\leq C|k-k_0|\e^{-\frac{3}{4}t|\text{Re}\theta|}+Ck_0\e^{-t|\text{Re}\theta|}\leq C|zt^{-\frac{1}{5}}|\e^{-\frac{3}{160}|z|^5}.
\end{aligned}
\ee
A similar computation shows that \eqref{4.25} also holds for $k=-k_0+l\e^{\frac{5\pi\ii}{6}}$, $0\leq l\leq\varepsilon$. Consequently, writing $l=|z|$,
\be
\|v^{(1)}-v^0\|_{L^\infty(\Gamma^\varepsilon_1)}\leq Ct^{-\frac{1}{5}},
\ee
and
\be
\|v^{(1)}-v^0\|_{L^1(\Gamma^\varepsilon_1)}\leq C\int_0^\infty lt^{-\frac{1}{5}}\e^{-\frac{3}{160}l^5}\frac{\dd l}{t^{\frac{1}{5}}}\leq Ct^{-\frac{2}{5}}.
\ee
Using the general inequality $\|f\|_{L^n}\leq\|f\|^{1-1/n}_{L^\infty}\|f\|_{L^1}^{1/n}$, \eqref{4.19} holds for $k\in\Gamma^\varepsilon_1$. Similar estimates applying to $\Gamma^\varepsilon_j$, $j=2,3$ show that \eqref{4.19} holds.

The variable $z=(20\beta t)^{\frac{1}{5}}k$ satisfies $|z|=(20\beta t)^{\frac{1}{5}}\varepsilon$ if $|k|=\varepsilon$. Thus, equation (B.5) yields
\be\label{4.28}
N^0(x,t;k)=I+\frac{\ii N^0_1(y)}{(20\beta t)^{\frac{1}{5}}k}+O(t^{-\frac{2}{5}}),\quad k\in\partial D_\varepsilon(0),~t\rightarrow\infty.
\ee
Thus, \eqref{4.20} and \eqref{4.21} follow from \eqref{4.28} and Cauchy's formula.
\end{proof}

Let $\hat{\Gamma}=\Gamma\cup\partial D_\varepsilon(0)$ and assume that the boundary of $D_\varepsilon(0)$ is oriented counterclockwise, see Fig. \ref{fig8}. Define $\hat{N}(x,t;k)$ by
\be
\hat{N}(x,t;k)=\left\{
\begin{aligned}
&N^{(1)}(x,t;k)N^0(x,t;k)^{-1},\quad k\in D_\varepsilon(0),\\
&N^{(1)}(x,t;k),~\qquad\qquad\qquad k\in\bfC\setminus D_\varepsilon(0),
\end{aligned}
\right.
\ee
then $\hat{N}(x,t;k)$ satisfies the following RH problem
\be\label{4.30}
\hat{N}_+(x,t;k)=\hat{N}_-(x,t;k)\hat{v}(x,t;k),\quad k\in\hat{\Gamma},
\ee
where the jump contour $\hat{\Gamma}=\Gamma^\varepsilon\cup\partial D_\varepsilon(0)\cup(\bfR\setminus[-k_0,k_0])\cup\hat{\Gamma}'$, $\hat{\Gamma}'=\Gamma\setminus(\bfR\cup\overline{D_\varepsilon(0)})$ is depicted in Fig. \ref{fig8}, and the jump matrix $\hat{v}(x,t;k)$ is given by
\be\label{4.31}
\hat{v}=\left\{
\begin{aligned}
&N^{0}_-v^{(1)}(N^{0}_+)^{-1},~~~ k\in\hat{\Gamma}\cap D_\varepsilon(0),\\
&(N^{0})^{-1},\qquad\qquad k\in\partial D_\varepsilon(0),\\
&v^{(1)},\qquad\qquad\quad~~ k\in\hat{\Gamma}\setminus\overline{D_\varepsilon(0)}.
\end{aligned}
\right.
\ee
\begin{lemma}\label{lemma4.3}
Let $\hat{\omega}=\hat{v}-I$. For each $1\leq n\leq\infty$, the following estimates hold:
\begin{align}
&\|\hat{\omega}\|_{L^n(\partial D_\varepsilon(0))}\leq Ct^{-\frac{1}{5}},\label{4.32}\\
&\|\hat{\omega}\|_{L^n(\Gamma^\varepsilon)}\leq Ct^{-\frac{1}{5}(1+\frac{1}{n})},\label{4.33}\\
&\|\hat{\omega}\|_{L^n(\bfR\setminus[-k_0,k_0])}\leq Ct^{-\frac{3}{2}},\label{4.34}\\
&\|\hat{\omega}\|_{L^n(\hat{\Gamma}')}\leq C\e^{-ct}.\label{4.35}
\end{align}
\end{lemma}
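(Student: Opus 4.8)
The plan is to estimate $\hat\omega=\hat v-I$ separately on the four pieces $\partial D_\varepsilon(0)$, $\Gamma^\varepsilon$, $\bfR\setminus[-k_0,k_0]$ and $\hat\Gamma'$ of $\hat\Gamma$, following the scheme of the proof of Lemma~\ref{lem4}. Two of these are immediate. On $\partial D_\varepsilon(0)$, \eqref{4.31} gives $\hat v=(N^0)^{-1}$, so $\hat\omega=(N^0)^{-1}-I$; since $\partial D_\varepsilon(0)$ has finite length, every $L^n$ norm is controlled by the $L^\infty$ norm, and \eqref{4.32} follows from \eqref{4.20}. On $\bfR\setminus[-k_0,k_0]$ the jump $v^{(1)}$ is built solely out of the small remainder $r_r$ (it equals $v^{(1)}_4$, and on the part lying inside $D_\varepsilon(0)$ it is in addition conjugated by the bounded analytic matrix $N^0$, which has no jump there), so $\hat\omega$ is controlled by the $L^1,L^2,L^\infty$ norms of $r_r$, which are $O(t^{-3/2})$ by Lemma~\ref{lemma4.1}(iii); this yields \eqref{4.34}.

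For $\Gamma^\varepsilon$ I would use that $N^0$ itself solves the jump relation $N^0_+=N^0_-v^0$ across $\Gamma^\varepsilon$, so that $\hat v=N^0_-v^{(1)}(N^0_+)^{-1}$ can be rewritten as $\hat\omega=N^0_-(v^{(1)}-v^0)(N^0_+)^{-1}$. Since $\det N^0=1$ and $N^0$ is uniformly bounded by \eqref{4.18}, both $N^0_\pm$ and their inverses are $O(1)$, and the estimate reduces to $\|v^{(1)}-v^0\|_{L^n(\Gamma^\varepsilon)}$; this is exactly \eqref{4.19}, giving \eqref{4.33}.

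The remaining and most delicate estimate is \eqref{4.35} on $\hat\Gamma'$, where the jump is $v^{(1)}_1$ or $v^{(1)}_2$ with off-diagonal entry $r_a\e^{\pm t\theta}$. Here I would invoke the sign structure of $\mathrm{Re}\,\theta$ already computed in the proof of the preceding lemma: for $k=k_0+l\e^{\pm\ii\pi/6}$ one has $\mathrm{Re}\,\theta(k)=-16\beta l^2(l^3+5\sqrt3\,k_0l^2+20k_0^2l+10\sqrt3\,k_0^3)\le-16\beta|k-k_0|^5$, and the analogous identity (with the opposite sign on the conjugate branches) on the rays issuing from $-k_0$. Since on $\hat\Gamma'$ we stay outside $D_\varepsilon(0)$ while $k_0\le(M/80\beta)^{1/4}t^{-1/5}\to0$, for $t$ large one has $|k-k_0|\ge\varepsilon/2$ and hence $|\mathrm{Re}\,\theta(k)|\ge c\varepsilon^5$ with the sign that makes $\e^{\pm t\theta}$ decay. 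Combining this with the bound \eqref{4.10} on $r_a$,
\[
|r_a\e^{\pm t\theta}|\le\frac{C}{1+|k|^2}\,\e^{\frac{t}{4}|\mathrm{Re}\,\theta|}\e^{-t|\mathrm{Re}\,\theta|}=\frac{C}{1+|k|^2}\,\e^{-\frac{3t}{4}|\mathrm{Re}\,\theta|}\le C\e^{-ct},
\]
and the decay in $|k|$ built into \eqref{4.10} makes all $L^n$ norms over the unbounded rays finite, so \eqref{4.35} follows. The main obstacle is precisely this last point: one must ensure that the exponential decay of $\e^{\pm t\theta}$ beats the compensating factor $\e^{\frac{t}{4}|\mathrm{Re}\,\theta|}$ carried by the analytic approximant $r_a$, which is why the explicit sign computation of $\mathrm{Re}\,\theta$ along the deformed contour, together with the fact that $\hat\Gamma'$ stays bounded away from the stationary points $\pm k_0$ uniformly for $(x,t)\in\mathcal P$, is essential; everything else is a routine consequence of the lemmas already in place.
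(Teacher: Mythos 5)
Your proposal is correct and follows essentially the same route as the paper: \eqref{4.32} from \eqref{4.20}, \eqref{4.33} from the identity $\hat\omega=N^0_-(v^{(1)}-v^0)(N^0_+)^{-1}$ together with \eqref{4.18}--\eqref{4.19}, \eqref{4.34} from the smallness of $r_r$ and the boundedness of $N^0$, and \eqref{4.35} from the sign of $\mathrm{Re}\,\theta$ on the deformed rays. Your treatment of \eqref{4.35} merely spells out the cancellation $\e^{\frac{t}{4}|\mathrm{Re}\,\theta|}\e^{-t|\mathrm{Re}\,\theta|}=\e^{-\frac{3t}{4}|\mathrm{Re}\,\theta|}$ that the paper leaves implicit in its one-line remark $\e^{-t|\mathrm{Re}\,\theta|}\leq C\e^{-ct}$.
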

\begin{proof}
The estimate \eqref{4.32} follows from \eqref{4.20}. For $k\in\Gamma^\varepsilon$, we have
\berr
\hat{\omega}=N^{0}_-(v^{(1)}-v^0)(N^{0}_+)^{-1},
\eerr
as a consequence, \eqref{4.18} and \eqref{4.19} imply \eqref{4.33}. On $\bfR\setminus[-k_0,k_0]$, the jump matrix $\hat{\omega}$ only involves the small remainder $r_r$, so the estimate \eqref{4.34} holds as a consequence of Lemma \ref{lemma4.1} and \eqref{4.18}. Finally, \eqref{4.35} follows from $\e^{-t|\text{Re}\theta|}\leq C\e^{-ct}$ uniformly on $\hat{\Gamma}'$.
\end{proof}

As the discussion in Subsection 3.3, the estimates in Lemma \ref{lemma4.3} show that the RH problem \eqref{4.30} for $\hat{N}$ has a unique solution given by
\be\label{4.36}
\hat{N}(x,t;k)=I+\frac{1}{2\pi\ii}\int_{\hat{\Gamma}}(\hat{\mu}\hat{\omega})(x,t;s)\frac{\dd s}{s-k},
\ee
where $\hat{\mu}=I+(I-\hat{C}_{\hat{w}})^{-1}\hat{C}_{\hat{w}}I,$ and $\hat{C}_{\hat{w}}f=\hat{C}_-(f\hat{w})$, $\hat{C}$ denote the Cauchy operator associated with $\hat{\Gamma}$. Moreover, the function $\hat{\mu}(x,t;k)$ satisfies
\be\label{4.37}
\|\hat{\mu}(x,t;\cdot)-I\|_{L^2(\hat{\Sigma})}=O(t^{-\frac{1}{5}}),\quad t\rightarrow\infty,~(x,t)\in\mathcal{P}.
\ee
It follows from \eqref{4.36} that
\be\label{4.38}
\lim_{k\rightarrow\infty}k(\hat{N}(x,t;k)-I)=-\frac{1}{2\pi\ii}\int_{\hat{\Gamma}}(\hat{\mu}\hat{\omega})(x,t;k)\dd k.
\ee
By \eqref{4.21}, \eqref{4.32} and \eqref{4.37}, we can get
\begin{align}
&\quad-\frac{1}{2\pi\ii}\int_{\partial D_\varepsilon(0)}(\hat{\mu}\hat{\omega})(x,t;k)\dd k\nn\\
&=-\frac{1}{2\pi\ii}\int_{\partial D_\varepsilon(0)}\hat{\omega}(x,t;k)\dd k-\frac{1}{2\pi\ii}\int_{\partial D_\varepsilon(0)}(\hat{\mu}(x,t;k)-I)\hat{\omega}(x,t;k)\dd k\nn\\
&=-\frac{1}{2\pi\ii}\int_{\partial D_\varepsilon(0)}\bigg((N^{0})^{-1}(x,t;k)-I\bigg)\dd k+O(\|\hat{\mu}-I\|_{L^2(\partial D_\varepsilon(0))}\|\hat{w}\|_{L^2(\partial D_\varepsilon(0))})\nn\\
&=\frac{\ii N_1^0(y)}{(20\beta t)^{\frac{1}{5}}}+O(t^{-\frac{2}{5}}),\quad t\rightarrow\infty.\nn
\end{align}
Using \eqref{4.33} and \eqref{4.37}, we have
\bea
\int_{\Gamma^\varepsilon}(\hat{\mu}\hat{\omega})(x,t;k)\dd k&=&\int_{\Gamma^\varepsilon}\hat{\omega}(x,t;k)\dd k+\int_{\Gamma^\varepsilon}(\hat{\mu}(x,t;k)-I)\hat{\omega}(x,t;k)\dd k\nn\\
&\leq&\|\hat{\omega}\|_{L^1(\Gamma^\varepsilon)}+\|\hat{\mu}-I\|_{L^2(\Gamma^\varepsilon)}\|\hat{\omega}\|_{L^2(\Gamma^\varepsilon)}\nn\\
&\leq&Ct^{-\frac{2}{5}},\quad t\rightarrow\infty.\nn
\eea
By \eqref{3.34} and \eqref{3.37}, the contribution from $\bfR\setminus[-k_0,k_0]$ to the right-hand side of \eqref{4.38} is
$O(t^{-\frac{3}{2}})$, and similarly, by \eqref{3.35} and \eqref{3.37}, the contribution from $\hat{\Gamma}'$ to the right-hand side of \eqref{4.38} is $O(\e^{-ct}),$ as $t\rightarrow\infty.$
Thus, we obtain the following important relation
\bea\label{4.39}
\begin{aligned}
\lim_{k\rightarrow\infty}k(\hat{M}(x,t;k)-I)&=\frac{\ii N_1^0(y)}{(20\beta t)^{\frac{1}{5}}}+O(t^{-\frac{2}{5}}),\quad t\rightarrow\infty.
\end{aligned}
\eea
Recalling the definition of $N_1^0(y)$ in \eqref{4.22} and the relation \eqref{4.6}, we obtain our another main results stated in the Theorem \ref{the4.3}.
\begin{remark}
We did not directly consider the asymptotic behavior of the solution to equation \eqref{1.1} in region $\mathcal{P}$ because there is no suitable scale transformation to eliminate $t$ from the coefficients of $k^3$ and $k^5$ terms of the phase function $\Phi(k)$ given by \eqref{3.2} at the same time.
\end{remark}

{\bf Acknowledgments.}

N. Liu was supported by the China Postdoctoral Science Foundation under Grant no. 2019TQ0041.

\begin{center}\textbf{Appendix A. Fourth order Painlev\'e II RH problem}\end{center}
\begin{figure}[htbp]
\centering
\includegraphics[width=3.5in]{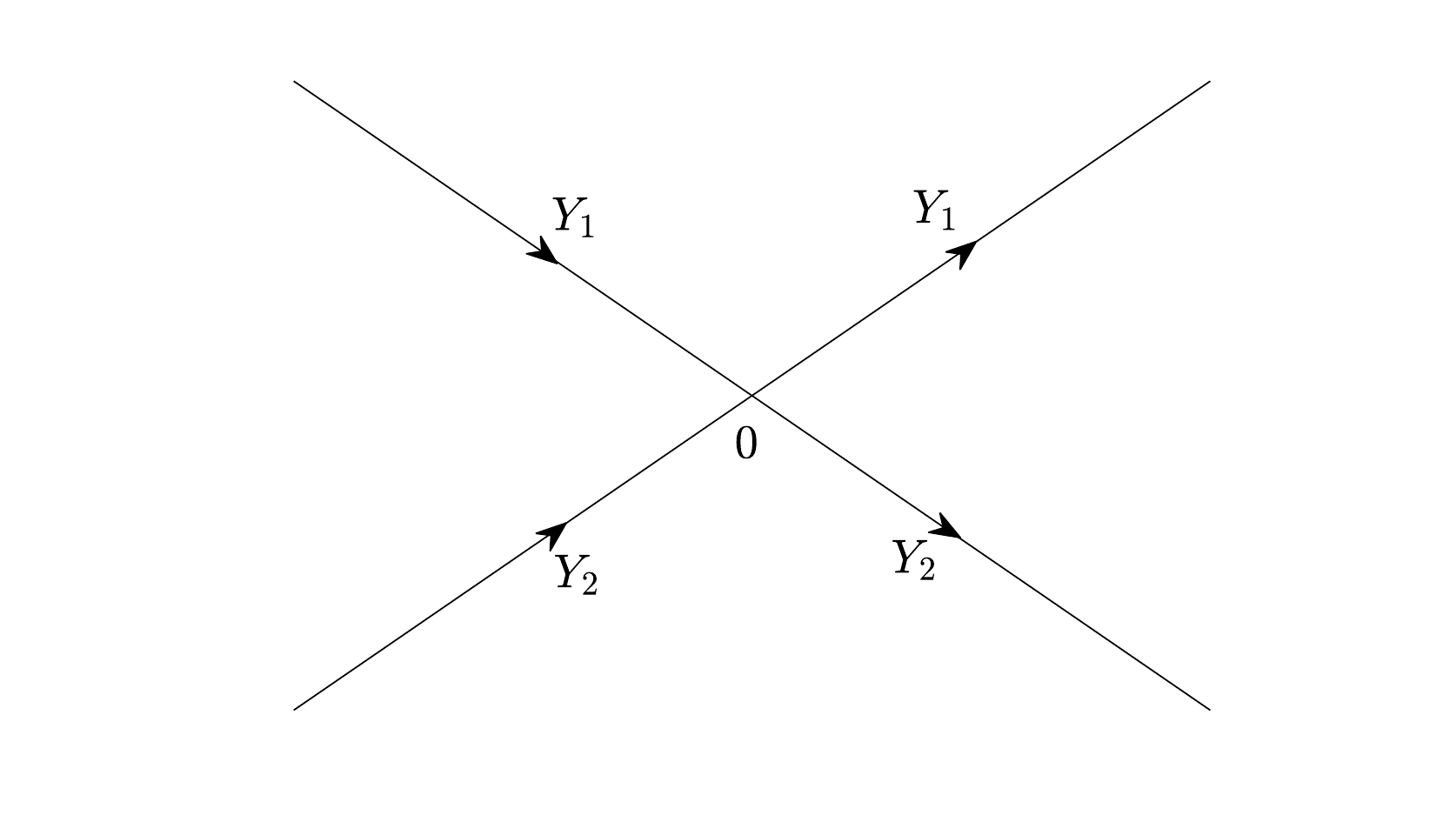}
\caption{The oriented contour $Y$.}\label{fig9}
\end{figure}
Let $Y$ denote the contour $Y=Y_1\cup Y_2$ oriented as in Fig. \ref{fig9}, where
\berr
Y_1=\{l\e^{\frac{\pi\ii}{6}}|l\geq0\}\cup\{l\e^{\frac{5\pi\ii}{6}}|l\geq0\},\quad
Y_2=\{l\e^{-\frac{\pi\ii}{6}}|l\geq0\}\cup\{l\e^{-\frac{5\pi\ii}{6}}|l\geq0\}.
\eerr
\textbf{Lemma A.1} (Fourth order Painlev\'e II RH problem)
\textit{Let $s\in\bfC$ be a complex number. Then the following RH problems parametrized by $y\in\bfR,s\in\bfC$:
\be\tag{A.1}
\left\{
\begin{aligned}
&N^Y_+(y,z)=N^Y_-(y,z)v^Y(y,z),~~z\in Y,\\
&N^Y(y,z)\rightarrow I,\qquad\qquad\quad\qquad~z\rightarrow\infty,
\end{aligned}
\right.
\ee
where the jump matrix $v^Y(y,z)$ is defined by
\be\tag{A.2}
v^Y(y,z)=\left\{
\begin{aligned}
&\begin{pmatrix}
1 &~ 0\\
s\e^{2\ii(\frac{4}{5}z^5+yz)} &~ 1
\end{pmatrix},\qquad z\in Y_1,\\
&\begin{pmatrix}
1 &~ \bar{s}\e^{-2\ii(\frac{4}{5}z^5+yz)}\\
0 &~ 1
\end{pmatrix},~~\quad z\in Y_2.
\end{aligned}
\right.
\ee
has a unique solution $N^Y(y,z)$ for each $y\in\bfR$. Moreover, there exists smooth functions $\{N_j^Y(y)\}^4$ of $y\in\bfR$ with decay as $y\rightarrow\infty$ such that
\be\tag{A.3}
N^Y(y,z)=I+\sum_{j=1}^4\frac{N_j^Y(y)}{z^j}+O(z^{-5}), \quad z\rightarrow\infty,
\ee
uniformly for $y$ in compact subsets of $\bfR$ and for $\arg z\in[0,2\pi]$. The leading coefficient $N_1^Y$ is given by
\be\tag{A.4}
N_1^Y(y)=\ii\begin{pmatrix}
-2\int^y_{\infty}u_p^2(y')\dd y' ~& u_p(y)\\
u_p(y) ~& 2\int^y_{\infty}u_p^2(y')\dd y'
\end{pmatrix},
\ee
where the real-valued function $u_p(y)$ satisfies the following fourth order Painlev\'e II equation (see \cite{KS})
\be\tag{A.5}
u_p^{''''}(y)+40u_p^2(y)u_p^{''}(y)+40u_p(y)u_p'^2(y)+96u_p^5(y)+4yu_p(y)=0.
\ee}
\begin{proof}
The jump matrix $v^Y$ admits the symmetries
\be\tag{A.6}
v^Y(y,z)=(v^Y)^\dag(y,\bar{z})=\overline{v^Y(y,-\bar{z})}.
\ee
We infer from the first of these symmetries that the RH problem for $N^Y(y,z)$ admits a vanishing lemma, as a consequence, there exists a unique solution $N^Y(y,z)$ which admits an expansion of the form (A.3). Assume that
\be\tag{A.7}
(N^Y)^{-1}(y,z)=I+\sum_{j=1}^4\frac{\varphi_j(y)}{z^j}+O(z^{-5}),
\ee
a direct calculation shows that
\begin{align}\tag{A.8}
\varphi_1&=-N_1^Y,~\varphi_2=(N_1^Y)^2-N_2^Y,~\varphi_3=N_1^YN_2^Y+N_2^YN_1^Y-(N_1^Y)^3-N_3^Y,\\
\varphi_4&=(N_1^Y)^4+N_1^YN_3^Y+N_3^YN_1^Y-(N_1^Y)^2N_2^Y-N_1^YN_2^YN_1^Y-N_2^Y(N_1^Y)^2+(N_2^Y)^2-N_4^Y.\nn
\end{align}
Let $\phi(y,z)=N^Y(y,z)\e^{-\ii(\frac{4}{5}z^5+yz)\sigma_3}$. Then the function $\mathcal{Y}(y,z)$ defined by
\be\tag{A.9}
\mathcal{Y}=\phi_y\phi^{-1}=(N_y^Y-\ii zN^Y\sigma_3)(N^Y)^{-1}
\ee
is an entire function of $z$, hence according to (A.3), (A.7) and (A.8), we have
\be\tag{A.10}
\mathcal{Y}(y,z)=-\ii z\sigma_3+\ii[\sigma_3,N_1^Y].
\ee
Thus, we find
\be\tag{A.11}
N_y^Y-\ii zN^Y\sigma_3=\mathcal{Y}N^Y.
\ee
Substituting the expansion (A.3) into (A.11) and collecting terms with $O(z^{-n})$, one can get
\be\tag{A.12}
\begin{aligned}
N_{1y}^Y+\ii[\sigma_3,N_2^Y]&=\ii[\sigma_3,N_1^Y]N_1^Y,\\
N_{2y}^Y+\ii[\sigma_3,N_3^Y]&=\ii[\sigma_3,N_1^Y]N_2^Y,\\
N_{3y}^Y+\ii[\sigma_3,N_4^Y]&=\ii[\sigma_3,N_1^Y]N_3^Y.
\end{aligned}
\ee
Accordingly, since
\be\tag{A.13}
\mathcal{Z}=\phi_z\phi^{-1}=\bigg(N_z^Y-\ii(4z^4+y)N^Y\sigma_3\bigg)(N^Y)^{-1}
\ee
is entire, and thus we get
\be\tag{A.14}
\mathcal{Z}=\mathcal{Z}_0+\mathcal{Z}_1z+\mathcal{Z}_2z^2+\mathcal{Z}_3z^3+\mathcal{Z}_4z^4.
\ee
Substituting the expansion (A.3) and (A.7) into (A.13), it follows from (A.8) and (A.12) that
\be\tag{A.15}
\begin{aligned}
&\mathcal{Z}_4=-4\ii\sigma_3,~\mathcal{Z}_3=4\ii[\sigma_3,N_1^Y],~\mathcal{Z}_2=-4N_{1y}^Y,
~\mathcal{Z}_1=-4N_{2y}^Y+4N_{1y}^YN_{1}^Y,\\
&\mathcal{Z}_0=-4N_{3y}^Y-4N_{1y}^Y(N_{1}^Y)^2+4N_{1y}^YN_{2}^Y+4N_{2y}^YN_{1}^Y-\ii y\sigma_3.
\end{aligned}\ee
We have shown that $\phi$ obeys the Lax pair equations
\be\tag{A.16}
\left\{
\begin{aligned}
\phi_y=&\mathcal{Y}\phi,\\
\phi_z=&\mathcal{Z}\phi,
\end{aligned}
\right.
\ee
where $\mathcal{Y}$ and $\mathcal{Z}$ are given by (A.10) and (A.14), respectively.

The symmetries (A.6) of the jump matrix $v^Y(y,z)$ implies that $N^Y(y,z)$ satisfies the symmetries
\be\tag{A.17}
N^Y(y,z)=(N^Y)^\dag(y,\bar{z})^{-1}=\sigma_2N^Y(y,-z)\sigma_2.
\ee
In particular, the coefficients $N_1^Y(y)$, $N_2^Y(y)$ and $N^Y_3(y)$ satisfy
\be\tag{A.18}
\begin{aligned}
N_1^Y=&-(N_1^Y)^\dag=-\sigma_2N_1^Y\sigma_2,\\
N_2^Y=&\sigma_2N_2^Y\sigma_2,~N_3^Y=-\sigma_2N_3^Y\sigma_2.
\end{aligned}
\ee
Therefore, we can write
\be\tag{A.19}
\begin{aligned}
N_1^Y(y)&=\begin{pmatrix}
\psi_1(y) ~& \psi_2(y)\\
\psi_2(y) ~& -\psi_1(y)
\end{pmatrix},\nn\\
N_2^Y(y)&=\begin{pmatrix}
f_1(y) ~& f_2(y)\\
-f_2(y) ~& f_1(y)
\end{pmatrix},\\
N_3^Y(y)&=\begin{pmatrix}
g_1(y) ~& g_2(y)\\
g_2(y) ~& -g_1(y)
\end{pmatrix},\nn
\end{aligned}
\ee
where $\{\psi_j(y),f_j(y),g_j(y)\}_1^2$ are complex-valued functions and $\psi_1(y),\psi_2(y)\in\ii\bfR$.
Then the compatibility condition
\be\tag{A.20}
\mathcal{Y}_z-\mathcal{Z}_y+\mathcal{Y}\mathcal{Z}-\mathcal{Z}\mathcal{Y}=0
\ee
of the Lax pair (A.16) can then rewrite as
\be\tag{A.21}
-\ii\sigma_3-\mathcal{Z}_{0y}+\ii[\sigma_3,N_1^Y]\mathcal{Z}_0-\ii\mathcal{Z}_0[\sigma_3,N_1^Y]=0,
\ee
since one can directly calculate that the coefficients of $z,z^2,z^3$ and $z^4$ in (A.20) vanish identically.
On the other hand, substituting (A.19) into (A.12), we find
\be\tag{A.22}
\left\{
\begin{aligned}
\psi_1'&=2\ii\psi_2^2,\\
\psi_2'+2\ii f_2&=-2\ii\psi_1\psi_2,\\
f_1'&=-2\ii\psi_2f_2,\\
f_2'+2\ii g_2&=2\ii\psi_2f_1,\\
g_1'&=2\ii\psi_2g_2.
\end{aligned}
\right.
\ee
Substituting (A.19) into (A.21) and using above relations, it follows from $(12)$-entry of (A.21) that
\be\tag{A.23}
\psi_2''''-40\psi_2^2\psi_2''-40\psi_2\psi_2'^2+96\psi_2^5+4y\psi_2=0,
\ee
however, the $(11)$-entry of (A.21) vanish identically. If we set $\psi_2(y)=\ii u_p(y)$, then $u_p(y)$ satisfies the fourth order Painlev\'e II equation (A.5). The lemma follows.
\end{proof}
\begin{center}\textbf{Appendix B. Model RH problem for sector $\mathcal{P}$}\end{center}
\begin{figure}[htbp]
\centering
\includegraphics[width=3.5in]{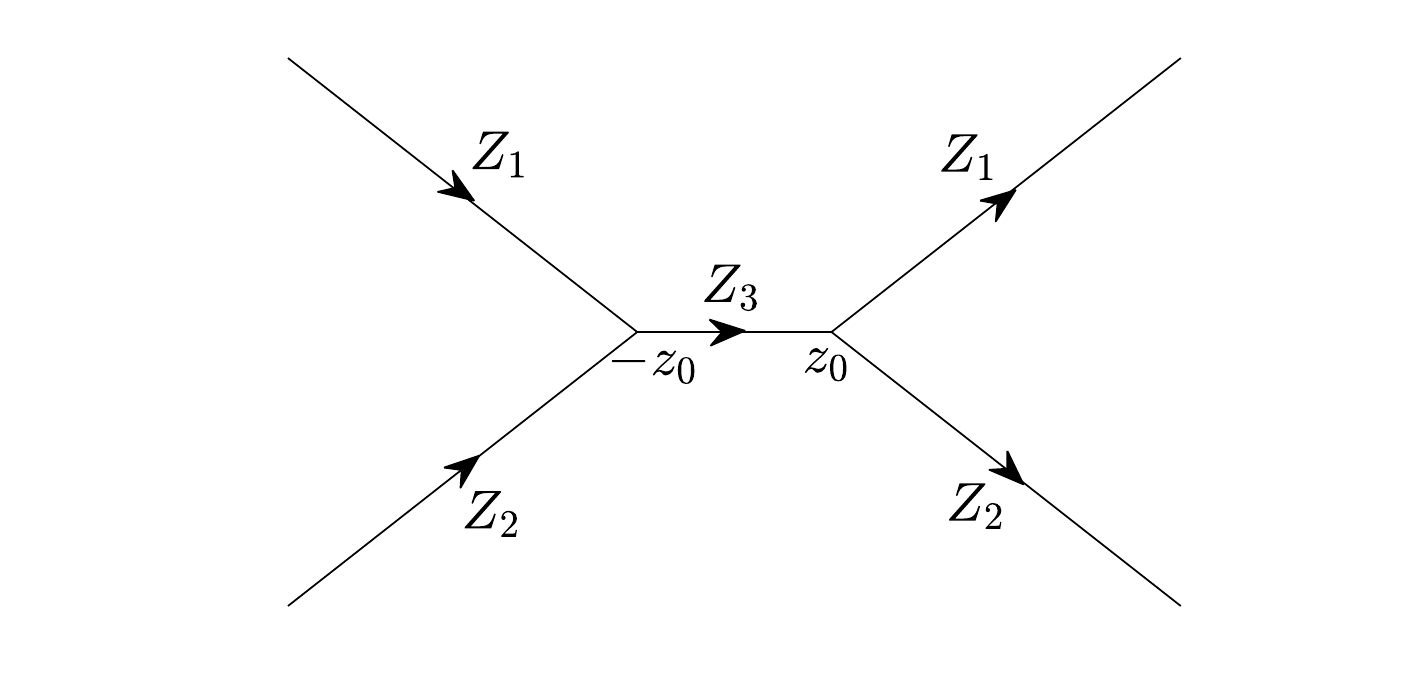}
\caption{The oriented contour $Z$.}\label{fig10}
\end{figure}
Given a number $z_0\geq0$, let $Z$ denote the contour $Z=Z_1\cup Z_1\cup Z_3$, where the line segments
\be\tag{B.1}
\begin{aligned}
Z_1&=\{z_0+l\e^{\frac{\pi\ii}{6}}|l\geq0\}\cup\{-z_0+l\e^{\frac{5\pi\ii}{6}}|l\geq0\},\\
Z_2&=\{z_0+l\e^{-\frac{\pi\ii}{6}}|l\geq0\}\cup\{-z_0+l\e^{-\frac{5\pi\ii}{6}}|l\geq0\},\\
Z_3&=\{l|-z_0\leq l\leq z_0\}
\end{aligned}
\ee
are oriented as in Fig. \ref{fig10}. It turns out that the long time asymptotics in the sector $\mathcal{P}$ is related to the solution $N^Z$ of the following family of RH problems parametrized by $y<0,s\in\bfC,z_0\geq0$:
\be\tag{B.2}
\left\{
\begin{aligned}
&N^Z_+(s,y,z,z_0)=N^Z_-(s,y,z,z_0)v^Z(s,y,z,z_0),~z\in Z,\\
&N^Z(s,y,z,z_0)\rightarrow I,\qquad\qquad\qquad\qquad\quad\qquad~z\rightarrow\infty,
\end{aligned}
\right.
\ee
where the jump matrix $v^Z(s,y,z,z_0)$ is defined by
\be\tag{B.3}
v^Z(s,y,z,z_0)=\left\{
\begin{aligned}
&\begin{pmatrix}
1 &~ 0\\
s\e^{2\ii(\frac{4}{5}z^5+yz)} &~ 1
\end{pmatrix},\qquad\qquad\qquad\qquad~~ z\in Z_1,\\
&\begin{pmatrix}
1 &~ \bar{s}\e^{-2\ii(\frac{4}{5}z^5+yz)}\\
0 &~ 1
\end{pmatrix},~~\quad\qquad\qquad\qquad~~ z\in Z_2,\\
&\begin{pmatrix}
1 &~ \bar{s}\e^{-2\ii(\frac{4}{5}z^5+yz)}\\
0 &~ 1
\end{pmatrix}\begin{pmatrix}
1 &~ 0\\
s\e^{2\ii(\frac{4}{5}z^5+yz)} &~ 1
\end{pmatrix},\quad z\in Z_3.
\end{aligned}
\right.
\ee
\textbf{Lemma B.1} (Model RH problem for sector $\mathcal{P}$) \textit{Define the parameter subset
\be\tag{B.4}
\Bbb P=\{(y,t,z_0)\in\bfR^3|-C_1\leq y<0,t\geq3,\sqrt[4]{-y}/\sqrt{2}\leq z_0\leq C_2\},
\ee
where $C_1,C_2>0$ are constants. Then for $(y,t,z_0)\in\Bbb P$, the RH problem (B.2) has a unique solution $N^Z(s,y,z,z_0)$ which satisfies
\be\tag{B.5}
N^Z(s,y,z,z_0)=I+\frac{\ii}{z}\begin{pmatrix}
-2\int^y_{\infty}u_p^2(y')\dd y' ~& u_p(y)\\
u_p(y) ~& 2\int^y_{\infty}u_p^2(y')\dd y'
\end{pmatrix}+O\bigg(\frac{1}{z^2}\bigg),\quad z\rightarrow\infty,
\ee
where $u_p(y)$ denotes the solution of the fourth order Painlev\'e II equation (A.5) and $N^Z(s,y,z,z_0)$ is uniformly bounded for $z\in\bfC\setminus Z$. Furthermore, $N^Z$ obeys the symmetries
\be\tag{B.6}
N^Z(s,y,z,z_0)=(N^Z)^\dag(s,y,\bar{z},z_0)^{-1}=\sigma_2N^Z(s,y,-z,z_0)\sigma_2.
\ee
}
\begin{proof}
Note that
\berr
\text{Re}\bigg(2\ii\bigg(\frac{4}{5}z^5+yz\bigg)\bigg)\leq l^2\bigg(-\frac{4}{5}l^3-4\sqrt{3}z_0l^2-16z_0^2l-8\sqrt{3}z_0^3\bigg)
\eerr
for all $z=z_0+l\e^{\frac{\pi\ii}{6}}$ and $z=-z_0+l\e^{\frac{5\pi\ii}{6}}$ with $l\geq0,z_0\geq0$ and $-4z_0^4\leq y<0$. Thus, we have
\berr
|\e^{2\ii(\frac{4}{5}z^5+yz)}|\leq C\e^{-|z\pm z_0|^2(\frac{4}{5}|z\pm z_0|^3+4\sqrt{3}z_0|z\pm z_0|^2+16z_0^2|z\pm z_0|+8\sqrt{3}z_0^3)},\quad z\in Z_1.
\eerr
Analogous estimates hold for $z\in Z_2$. However, $|\e^{\pm2\ii(\frac{4}{5}z^5+yz)}|=1$ for $z\in Z_3$, this shows that $v^Z-I$ exponentially fast as $z\rightarrow\infty$.

The jump matrix $v^Z$ obeys the same symmetries (A.6) as $v^Y$. In particular, $v^Z$ is Hermitian and positive definite on $Z\cap\bfR$ and satisfies $v^Z(s,y,z,z_0)=(v^Z)^\dag(s,y,z,z_0)$ on $Z\setminus\bfR$. This implies the existence of a vanishing lemma from which we deduce the unique existence of the solution $N^Z$. The symmetries (B.6) follow from the symmetries of $v^Z$. Moreover, the RH problem (B.2) for $N^Z(s,y,z,z_0)$ can be transformed into the RH problem (A.1) for $N^Y(y,z)$ up
to a trivial contour deformation. Thus (B.5) follows from (A.3) and (A.4).
\end{proof}

\medskip
\small{

}
\end{document}